\theoremstyle{plain}
\newtheorem{thm}{Theorem}[section]
\newtheorem{prop}[thm]{Proposition}
\newtheorem{lem}[thm]{Lemma}
\newtheorem{cor}[thm]{Corollary}
\theoremstyle{definition}
\newtheorem{rmk}[thm]{Remark}
\newcommand{\Hom}{\mathrm{Hom}}
\newcommand{\Aut}{\mathrm{Aut}}
\newcommand{\kbar}{\bar{K}}
\newcommand{\okbar}{\mathcal{O}_{\bar{K}}}
\newcommand{\Acrys}{A_{\mathrm{crys}}}
\newcommand{\Ast}{\hat{A}_{\mathrm{st}}}
\newcommand{\okey}{\mathcal{O}_K}
\newcommand{\oc}{\mathcal{O}_{\mathbb{C}}}
\newcommand{\oen}{\mathcal{O}_N}
\newcommand{\Ker}{\mathrm{Ker}}
\newcommand{\Img}{\mathrm{Im}}
\newcommand{\Fil}{\mathrm{Fil}}
\newcommand{\Tor}{\mathrm{Tor}}
\newcommand{\Ext}{\mathrm{Ext}}
\newcommand{\cM}{\mathcal{M}}
\newcommand{\Gal}{\mathrm{Gal}}
\newcommand{\Frac}{\mathrm{Frac}}
\newcommand{\cmtil}{\tilde{\mathcal{M}}}
\newcommand{\Stil}{\tilde{S}_1}
\newcommand{\Tsta}{T_{\mathrm{st},\underline{\pi}}^{*}}
\newcommand{\oeln}{\mathcal{O}_{L_n}}
\newcommand{\oefn}{\mathcal{O}_{F_n}}
\newcommand{\tokbar}{\tilde{\mathcal{O}}_{\kbar}}
\newcommand{\beF}{\mathfrak{b}_F}
\newcommand{\beLn}{\mathfrak{b}_{L_n}}
\newcommand{\bekbar}{\mathfrak{b}_{\kbar}}
\newcommand{\aFj}{\mathfrak{a}_{F/K}^{j}}
\newcommand{\PD}{\mathrm{PD}}
\newcommand{\SG}{\mathfrak{S}}
\newcommand{\SGm}{\mathfrak{M}}
\newcommand{\Mod}{\mathrm{Mod}}
\newcommand{\uep}{\underline{\varepsilon}}
\newcommand{\upi}{\underline{\pi}}
\newcommand{\oef}{\mathcal{O}_F}
\newcommand{\bZ}{\mathbb{Z}}
\begin{document}

\title[Ramification bound of torsion semi-stable representations]{On a ramification
bound of torsion semi-stable representations over a local field}
\author{Shin Hattori}
\date{\today}
\email{shin-h@math.kyushu-u.ac.jp}
\address{Faculty of Mathematics, Kyushu University}
%\classification{11S23}
%\keywords{ramification, integral $p$-adic Hodge theory, semi-stable representation}
\thanks{Supported by 21st Century COE program ``Mathematics of Nonlinear
Structure via Singularity'' at Department of Mathematics, Hokkaido university, and by the JSPS International Training Program (ITP)}

\begin{abstract}
Let $p$ be a rational prime, $k$ be a perfect field of characteristic
 $p$, $W=W(k)$ be the ring of Witt vectors, $K$ be a finite totally
 ramified extension of $\Frac(W)$ of degree $e$ and $r$ be a
 non-negative integer satisfying $r<p-1$. In
 this paper, we prove the upper numbering ramification group
 $G_{K}^{(j)}$ for $j>u(K,r,n)$ acts trivially on the $p^n$-torsion semi-stable $G_K$-representations with Hodge-Tate weights in $\{0,\ldots,r\}$, where $u(K,0,n)=0$,
 $u(K,1,n)=1+e(n+1/(p-1))$ and
 $u(K,r,n)=1-p^{-n}+e(n+r/(p-1))$ for $1<r<p-1$.
\end{abstract}

\maketitle

\section{Introduction}\label{intro}

Let $p$ be a rational prime, $k$ be a perfect field of characteristic
 $p$, $W=W(k)$ be the ring of Witt vectors and $K$ be a finite totally
 ramified extension of $K_0=\Frac(W)$ of degree $e=e(K)$. We normalize the valuation $v_K$ of $K$ as
$v_K(p)=e$ and extend this to any algebraic closure of $K$. Let the maximal
ideal of $K$ be denoted by $m_K$, an algebraic closure of $K$ by $\kbar$
 and the absolute Galois group of $K$ by $G_K=\Gal(\kbar/K)$. Let $G_{K}^{(j)}$ denote the
$j$-th upper numbering ramification group in the sense of
\cite{F}. Namely, we put $G_{K}^{(j)}=G_{K}^{j-1}$, where the latter is
the upper numbering ramification group defined in \cite{S_CL}.

Consider a proper smooth scheme $X_K$ over $K$ and put
$X_{\kbar}=X_K\times_K\kbar$. Let $\mathcal{L}\supseteq \mathcal{L}'$ be $G_K$-stable $\mathbb{Z}_p$-lattices in the
$r$-th etale cohomology group
$H^{r}_{\mathrm{\acute{e}t}}(X_{\kbar},\mathbb{Q}_p)$ such that the quotient $\mathcal{L}/\mathcal{L}'$ is killed by $p^n$. In \cite{F}, Fontaine conjectured the upper
numbering ramification group $G_{K}^{(j)}$ acts trivially on the
$G_K$-modules $\mathcal{L}/\mathcal{L}'$ and $H^{r}_{\mathrm{\acute{e}t}}(X_{\kbar},\bZ/p^n\bZ)$ for $j>e(n+r/(p-1))$ if $X_K$
has good reduction.  For $e=1$ and $r<p-1$, this conjecture
was proved independently by himself (\cite{F2}, for $n=1$) and Abrashkin
(\cite{Ab}, for any $n$), using the theory of Fontaine-Laffaille
(\cite{FL}) and the
comparison theorem of Fontaine-Messing (\cite{FM}, see also \cite{B_comparison} and \cite{BrMes}) between the etale cohomology groups
of $X_K$ and the crystalline cohomology groups of the reduction of
$X_K$. From these results, they also showed some rareness of a proper
smooth scheme over $\mathbb{Q}$ with everywhere good reduction
(\cite[Th\'{e}or\`{e}me 1]{F2}, \cite[Section 7]{Ab2}). In fact, they proved this ramification
bound for the torsion crystalline representations of $G_K$ with Hodge-Tate weights in
$\{0,\ldots,r\}$ in the case where $K$ is absolutely unramified.

On the other hand, for a torsion semi-stable representation with Hodge-Tate weights
in the same range, a similar ramification
bound for $e=1$ and $n=1$ is obtained by Breuil (see \cite[Proposition
9.2.2.2]{BrMes}). He showed, assuming the Griffiths transversality which
in general does not hold, that if $e=1$ and $r<p-1$, then the ramification group $G_{K}^{(j)}$ acts
trivially on the mod $p$ semi-stable representations for
$j>2+1/(p-1)$. 

In this paper, we prove a ramification bound for the torsion semi-stable representations of $G_K$ with Hodge-Tate weights in $\{0,\ldots,r\}$ with no assumption on $e$ but under the assumption $r<p-1$. Let $\pi$ be a uniformizer of $K$, $E(u)\in W[u]$ be the Eisenstein polynomial of $\pi$ over $W$ and $S$ be the $p$-adic completion of the divided power envelope of $W[u]$ with respect to the ideal $(E(u))$. Consider a category $\Mod^{r,\phi,N}_{/S_\infty}$ of filtered $(\phi_r,N)$-modules over the ring $S$ and a $G_K$-module 
\[
\Tsta(\cM)=\Hom_{S,\mathrm{Fil}^r,\phi_r,N}(\cM,\hat{A}_{\mathrm{st},\infty})
\]
for $\cM\in\Mod^{r,\phi,N}_{/S_\infty}$, where $\hat{A}_{\mathrm{st},\infty}$ is a $p$-adic period ring (\cite{B_ss}). Then our main
theorem is the following.

\begin{thm}\label{main_br}
Let $r$ be a non-negative integer such that $r<p-1$. Let $\cM$ be an object of the category $\Mod^{r,\phi,N}_{/S_\infty}$ which is killed by $p^n$. Then the $j$-th upper numbering
 ramification group $G_{K}^{(j)}$ acts trivially on the $G_K$-module
 $\Tsta(\cM)$ for $j>u(K,r,n)$, where
\[
u(K,r,n)=\left\{
\begin{array}{lc}
0 & (r=0),\\
1+e(n+\frac{1}{p-1}) & (r=1),\\
1-\frac{1}{p^n}+e(n+\frac{r}{p-1}) & (1<r<p-1).
\end{array}
\right.
\]

\end{thm}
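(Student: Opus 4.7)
The plan is to adapt the Fontaine--Abrashkin strategy for torsion crystalline representations (\cite{F2}, \cite{Ab}) to the semi-stable setting. The key idea is to construct a $G_K$-equivariant embedding of $\Tsta(\cM)$ into a $G_K$-module of the form $(\okbar/\bekbar)^d$ for an explicit ideal $\bekbar\subset\okbar$, and then apply Fontaine's ramification estimate from \cite{F}, which asserts that $G_{K}^{(j)}$ acts trivially on $\okbar/\bekbar$ whenever $j > v_K(\bekbar)$ in the paper's convention. Translating the bound on $v_K(\bekbar)$ into the bound $u(K,r,n)$ of the theorem will then be immediate.

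To construct the embedding, I would analyze morphisms $f\colon\cM\to\hat{A}_{\mathrm{st},\infty}$ using the compatibility with $\mathrm{Fil}^r$, $\phi_r$, and $N$, which confines $f(\cM)$ to a specific sub-lattice of $\hat{A}_{\mathrm{st},\infty}$. It is convenient to factor the analysis through the associated Kisin module $\SGm$ over $\SG=W[[u]]$, where the monodromy is absent. Over $\SG$ one obtains a crystalline-type estimate of Fontaine--Laffaille shape, contributing the term $e(n+r/(p-1))$ after d\'evissage along the filtration $\cM\supset p\cM\supset\cdots\supset p^{n-1}\cM\supset p^n\cM=0$. This gives control on the $G_{K_\infty}$-part of the representation, where $K_\infty=\bigcup_n K(\pi_n)$ is the Kummer tower attached to the chosen uniformizer $\pi$.

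The remaining step is to reintroduce the monodromy and pass from the $G_{K_\infty}$-representation back to the full $G_K$-representation $\Tsta(\cM)$. This passage costs an additional ramification contribution: $1$ when $r=1$ and $1-p^{-n}$ when $1<r<p-1$. The correction reflects the interaction between $N$ and the divided powers of $E(u)$ that define $S$ and $\hat{A}_{\mathrm{st},\infty}$, together with the effect of the Kummer descent on the upper numbering via the Herbrand function.

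The main obstacle is obtaining the sharp fractional correction $1-p^{-n}$ in the range $r>1$: a naive estimate produces a loss of $1$ (as in Breuil's bound \cite{BrMes} for $e=1$, $n=1$), and refining this requires exploiting both the torsion hypothesis $p^n\cM=0$ and the divided-power structure, so that higher iterates of $N$ land in higher powers of $p$ relative to the filtration. A subsidiary but nontrivial difficulty is checking that the constructed lattice inside $\okbar/\bekbar$ is genuinely $G_K$-stable and not merely $G_{K_\infty}$-stable, which requires an explicit compatibility between $N$ and the Galois action on the uniformizer system $\underline{\pi}=(\pi_n)$.
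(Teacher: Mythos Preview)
Your sketch does not match the paper's argument and, more importantly, has two substantive gaps that would prevent it from yielding the stated bound.

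\textbf{The embedding.} You assert a $G_K$-equivariant embedding of $\Tsta(\cM)$ into $(\okbar/\bekbar)^d$, but no such embedding is constructed, and in fact this is precisely the difficulty. The natural identifications
\[
\Tsta(\cM)|_{G_{K_n}}\simeq \Hom_{S,\Fil^r,\phi_r}(\cM,W_n^{\mathrm{PD}}(\tokbar))
\]
and the further comparison with $\bar{A}_{n,r+}=W_n(\okbar/\bekbar)/([\zeta_{p^n}]-1)^rW_n(m_{\kbar}/\bekbar)$ all depend on the chosen $\pi_n$ (and on $\zeta_{p^{n+1}}$), so they are only equivariant for $G_{K_n}$ or $G_{F_n}$, not for $G_K$. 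The paper never produces a $G_K$-equivariant embedding into a quotient of $\okbar$; instead it replaces this by passing to the explicit Galois extension $F_n=K(\pi_n,\zeta_{p^{n+1}})$, bounding $u_{F_n/K}$ directly, and then controlling $L_n=LF_n$ over $K$ via Fontaine's property $(P_j)$ (in the sharp form of Yoshida). This is a genuinely different architecture from ``embed and quote Fontaine's bound''.

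\textbf{The descent correction.} You attribute the additive constants $1$ (for $r=1$) and $1-p^{-n}$ (for $r>1$) to ``interaction between $N$ and divided powers'' and ``Kummer descent via the Herbrand function'', but this is not where they come from, and without a mechanism your argument cannot produce them. In the paper, these constants arise from an Abbes--Saito style computation for $f(T)=T^{p^n}-\pi$: one has $s_f=1-p^{-n}+ne$ and $\alpha_f=p^{-n}+e/(p-1)$, and the requirement $j-s_f>er/(p-1)$ in Lemma~\ref{prei} forces $j>1-p^{-n}+e(n+r/(p-1))$. For $r=1$ the sharper constant comes from $u_{F_n/K}=s_f+\alpha_f=1+e(n+1/(p-1))$. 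The monodromy $N$ plays no role at this stage; it was already eliminated by restricting to $G_{K_n}$ (sending $X\mapsto 0$ in $\Ast$). Likewise, the Kisin module and d\'evissage along $p^i\cM$ are not used for the bound: the paper instead passes to the smaller ring $\Sigma=W[[u,E(u)^p/p]]$, proves an equivalence $\Mod^{r,\phi}_{/\Sigma_\infty}\simeq\Mod^{r,\phi}_{/S_\infty}$, and carries out an explicit Witt-vector lifting argument (Proposition~\ref{liftinglem}) to identify the fixed field $L_n$ via Corollary~\ref{Fon}. The actual ramification bound then comes from checking property $(P_j)$ for $L_n/K$ by hand (Proposition~\ref{Pm'}), transporting solutions along an $\okey$-algebra map $\eta:\mathcal{O}_{L_n}\to\oef/\mathfrak{a}_{F/K}^j$.
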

We can check that this bound is sharp for $r\leq 1$ (Remark \ref{sharp}). 

From this theorem and \cite[Proposition 1.3]{F}, we have the
following corollary.
\begin{cor}\label{discbound}
Let the notation be as in the theorem and $L$ be the finite
 extension of $K$ cut out by the $G_K$-module
 $\Tsta(\cM)$. Namely, the finite extension $L$ is defined by 
\[
G_L=\Ker(G_K\to\Aut(\Tsta(\cM))).
\]
Let $\mathfrak{D}_{L/K}$ denote the
 different of the extension $L/K$. Then we have the inequality
\[
v_K(\mathfrak{D}_{L/K})<u(K,r,n)
\]
for $r>0$ and $v_K(\mathfrak{D}_{L/K})=0$ for $r=0$.
\end{cor}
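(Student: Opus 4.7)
The plan is to deduce the corollary as a formal consequence of Theorem \ref{main_br} together with \cite[Proposition 1.3]{F}, treating the case $r=0$ separately since the displayed inequality $v_K(\mathfrak{D}_{L/K})<u(K,r,n)=0$ would otherwise be impossible.

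First I would verify that $L/K$ is in fact a finite Galois extension of $K$. Since $\cM$ is killed by $p^{n}$ and belongs to $\Mod^{r,\phi,N}_{/S_\infty}$, the module $\Tsta(\cM)$ is finite, so $\Ker(G_K\to\Aut(\Tsta(\cM)))=G_L$ is an open normal subgroup and $\Gal(L/K)=G_K/G_L$ is a finite group. Theorem \ref{main_br} then gives $G_K^{(j)}\subseteq G_L$ for every $j>u(K,r,n)$; passing to the quotient, the upper numbering filtration on $\Gal(L/K)$ (in Fontaine's normalization) satisfies $\Gal(L/K)^{(j)}=1$ for such $j$.

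For $r>0$ I would then invoke \cite[Proposition 1.3]{F}, which converts precisely this kind of vanishing of upper numbering ramification groups into the estimate $v_K(\mathfrak{D}_{L/K})<u(K,r,n)$; the strictness of the inequality in the conclusion matches the strictness of the hypothesis $j>u(K,r,n)$. For $r=0$ the bound $u(K,0,n)=0$ means that $G_K^{(j)}$ acts trivially on $\Tsta(\cM)$ for \emph{every} $j>0$; in particular, since in the normalization of \cite{F} the group $G_K^{(1)}$ coincides with the inertia subgroup $G_K^{0}$ of \cite{S_CL}, the inertia of $L/K$ is trivial, so $L/K$ is unramified and $v_K(\mathfrak{D}_{L/K})=0$.

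The corollary is essentially a formal consequence of the main theorem, so I do not expect any serious obstacle. The only point requiring a little care is bookkeeping the shift $G_K^{(j)}=G_K^{j-1}$ between the two normalizations, in particular to confirm that the case $r=0$ genuinely yields an unramified extension and that the hypothesis $j>u(K,r,n)$ aligns with the strict inequality in the conclusion of \cite[Proposition 1.3]{F}.
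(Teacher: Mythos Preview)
Your proposal is correct and follows exactly the approach indicated in the paper, which simply records that the corollary follows from Theorem \ref{main_br} together with \cite[Proposition 1.3]{F}. Your extra care with the $r=0$ case and the normalization $G_K^{(j)}=G_K^{j-1}$ is appropriate and matches how the paper handles $r=0$ (namely, by observing that $\Tsta(\cM)$ is unramified in that case).
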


Combining these results with a theorem of Liu (\cite[Theorem 2.3.5]{Li}) or a theorem of Caruso (\cite[Th\'{e}or\`{e}me 1.1]{Ca2}), we will show the corollary below.

\begin{cor}\label{coret}
Let $r$ be a non-negative integer such that $r<p-1$. Then the same bounds as in Theorem \ref{main_br} and Corollary \ref{discbound} are also valid for the torsion $G_K$-modules of the following two cases:
\begin{enumerate}
\item the $G_K$-module $\mathcal{L}/\mathcal{L}'$, where $\mathcal{L}\supseteq \mathcal{L}'$ are $G_K$-stable $\bZ_p$-lattices in a semi-stable $p$-adic representation $V$ with Hodge-Tate weights in $\{0,\ldots,r\}$ such that $\mathcal{L}/\mathcal{L}'$ is killed by $p^n$.
\item the $G_K$-module $H^{r}_{\acute{e}t}(X_{\kbar},\bZ/p^n\bZ)$, where $X_K$ is a proper smooth algebraic variety over $K$ which has a proper semi-stable model over $\okey$ and $r$ satisfies $er<p-1$ for $n=1$ and $e(r+1)<p-1$ for $n>1$.
\end{enumerate}
\end{cor}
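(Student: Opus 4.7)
The strategy is to reduce both cases to Theorem \ref{main_br} and Corollary \ref{discbound} by showing that the relevant torsion $G_K$-representations arise, up to $G_K$-equivariant isomorphism, from an object of $\Mod^{r,\phi,N}_{/S_\infty}$ via the functor $\Tsta$. Once that reduction is in hand, the ramification bound and the bound on the different are immediate consequences.

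For part (1), I would argue as follows. Write $T=\mathcal{L}/\mathcal{L}'$, a torsion $G_K$-module killed by $p^n$ and sitting inside the lattice quotients of a semi-stable $G_K$-representation $V$ with Hodge--Tate weights in $\{0,\ldots,r\}$. By Liu's theorem \cite[Theorem 2.3.5]{Li}, the category of such lattices is equivalent to a category of $(\phi,\hat G)$-modules, and, crucially for our purposes, there exists an object $\cM\in\Mod^{r,\phi,N}_{/S_\infty}$, killed by $p^n$, together with a $G_K$-equivariant isomorphism $T\simeq\Tsta(\cM)$. Alternatively, Caruso's theorem \cite[Th\'{e}or\`{e}me 1.1]{Ca2} gives a direct equivalence between torsion semi-stable $G_K$-representations of weights in $\{0,\ldots,r\}$ and the category $\Mod^{r,\phi,N}_{/S_\infty}$, and under this equivalence the Galois module $T$ again corresponds to some $\cM$ killed by $p^n$. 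Either way, Theorem \ref{main_br} applied to this $\cM$ yields triviality of $G_{K}^{(j)}$-action for $j>u(K,r,n)$ on $T$, and Corollary \ref{discbound} gives the corresponding bound on $v_K(\mathfrak{D}_{L/K})$.

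For part (2), let $X_K$ have a proper semi-stable model $\mathcal{X}$ over $\okey$. The plan is to invoke a comparison theorem between $p$-adic \'etale and log-crystalline cohomology to place $H^{r}_{\mathrm{\acute{e}t}}(X_{\bar K},\bZ/p^n\bZ)$ inside the torsion semi-stable setting of part (1). Under the numerical hypotheses $er<p-1$ for $n=1$ and $e(r+1)<p-1$ for $n>1$, the relevant integral comparison isomorphism applies (this is where Caruso's theorem \cite[Th\'{e}or\`{e}me 1.1]{Ca2} is needed in its integral form, together with the log-crystalline comparison), and it identifies $H^{r}_{\mathrm{\acute{e}t}}(X_{\bar K},\bZ/p^n\bZ)$ with $\Tsta(\cM)$ for some $\cM\in\Mod^{r,\phi,N}_{/S_\infty}$ killed by $p^n$. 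Equivalently, one can interpret it as the quotient $\mathcal{L}/\mathcal{L}'$ of two $G_K$-stable $\bZ_p$-lattices in the semi-stable representation $H^{r}_{\mathrm{\acute{e}t}}(X_{\bar K},\bQ_p)$, and then apply part (1) directly.

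The main obstacle is pinning down the correct integral comparison statement in case (2): the conditions $er<p-1$ and $e(r+1)<p-1$ are exactly the hypotheses under which one can realize the mod $p^n$ \'etale cohomology by an object of $\Mod^{r,\phi,N}_{/S_\infty}$ without losing information to torsion in the comparison. Once this bookkeeping is settled, the two corollaries of Theorem \ref{main_br} transport the ramification and different bounds to the two Galois modules under consideration with no further work.
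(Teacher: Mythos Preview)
Your overall strategy matches the paper's: reduce both cases to Theorem \ref{main_br} by exhibiting the torsion module as $\Tsta(\cM)$ for some $\cM\in\Mod^{r,\phi,N}_{/S_\infty}$, using Liu for (1) and Caruso for (2). Two points need tightening, however.

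For part (1), Liu's \cite[Theorem 2.3.5]{Li} is a statement about lattices: it produces a strongly divisible module $\hat{\cM}\in\Mod^{r,\phi,N}_{/S}$ with $\mathcal{L}\simeq T^{*}_{\mathrm{st},\underline{\pi}}(\hat{\cM})$, not a torsion object directly. The paper therefore first reduces to the case $\mathcal{L}'=p^n\mathcal{L}$ via the $G_K$-surjection $\mathcal{L}/p^n\mathcal{L}\twoheadrightarrow\mathcal{L}/\mathcal{L}'$ (a quotient inherits the ramification bound), and then sets $\cM=\hat{\cM}/p^n\hat{\cM}\in\Mod^{r,\phi,N}_{/S_\infty}$ so that $\mathcal{L}/p^n\mathcal{L}\simeq\Tsta(\cM)$. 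You skip this reduction; it is short but not automatic. Your proposed alternative via \cite[Th\'{e}or\`{e}me 1.1]{Ca2} is misattributed: that theorem concerns \'etale cohomology of varieties with semi-stable reduction, not arbitrary torsion quotients of lattices in semi-stable representations, so it does not provide a shortcut here.

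For part (2), invoking \cite[Th\'{e}or\`{e}me 1.1]{Ca2} directly is exactly what the paper does and is correct. Your second route---viewing $H^{r}_{\mathrm{\acute{e}t}}(X_{\bar K},\bZ/p^n\bZ)$ as a lattice quotient in $H^{r}_{\mathrm{\acute{e}t}}(X_{\bar K},\bQ_p)$ and applying part (1)---does not work in general: the integral cohomology $H^{r}_{\mathrm{\acute{e}t}}(X_{\bar K},\bZ_p)$ may have $p$-torsion, so the universal coefficient sequence need not present the mod $p^n$ cohomology as $\mathcal{L}/\mathcal{L}'$ for lattices in a single $\bQ_p$-representation. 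Drop that alternative and rely on Caruso's comparison, which is precisely why the numerical hypotheses on $er$ appear.
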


For the proof of Theorem \ref{main_br}, we basically follow a
 beautiful argument of Abrashkin (\cite{Ab}). We may assume $p\geq 3$
 and $r\geq 1$. Consider the finite Galois extension
\[
F_n=K(\pi^{1/p^n},\zeta_{p^{n+1}})
\]
of $K$ whose upper ramification is
bounded by $u(K,r,n)$. Let $L_n$ be the finite Galois extension of $F_n$
cut out by $\Tsta(\cM)|_{G_{F_n}}$. Then we bound the
ramification of $L_n$ over $K$. For this, we show that to study this
$G_{F_n}$-module we can use a
variant over a smaller coefficient ring $\Sigma$ of filtered
$(\phi_r,N)$-modules over $S$. In precise, we set
\[
\Sigma=W[[u,E(u)^p/p]].
\]
This ring $\Sigma$ is small enough for
the method of Abrashkin, in which he uses filtered modules of
Fontaine-Laffaille (\cite{FL}) whose coefficient ring is $W$, to work
also in the case where $K$ is absolutely ramified.

\ \linebreak
\noindent
{\bf Acknowledgments.} The author would like to pay his gratitude to Iku Nakamura and
Takeshi Saito for their warm encouragements. He wants to thank Manabu
Yoshida for kindly allowing him to include the proof of Proposition \ref{Yos}. He also wants to thank Ahmed
Abbes, Xavier Caruso, Takeshi Tsuji, and especially Victor Abrashkin and Yuichiro
Taguchi, for useful discussions and comments. Finally, he is very grateful to the referee for his or her valuable comments to improve this paper.

\section{Filtered $(\phi_r,N)$-modules of Breuil}

In this section, we recall the theory of filtered
$(\phi_r,N)$-modules over $S$ of Breuil, which is developed by himself and most
recently by Caruso and Liu (see for example \cite{B_ss}, \cite{Ca2},
\cite{Li}, \cite{CL}). In what follows, we always take the divided
power envelope of a $W$-algebra with the compatibility condition with the natural divided
power structure on $pW$.

Let $p$ be a rational prime and $\sigma$ be the Frobenius
endomorphism of $W$. We fix once and for all a uniformizer
$\pi$ of $K$ and a system $\{\pi_n\}_{n\in\mathbb{Z}_{\geq 0}}$ of
$p$-power roots of $\pi$ in $\kbar$ such that $\pi_0=\pi$ and $\pi_n=\pi_{n+1}^p$
for any $n$. Let $E(u)$ be the Eisenstein polynomial of
$\pi$ over $W$ and set $S=(W[u]^{\mathrm{PD}})^{\wedge}$, where $\PD$
means the divided power envelope and this is taken with respect to the ideal
$(E(u))$, and $\wedge$ means the $p$-adic
completion. The ring $S$ is endowed with the $\sigma$-semilinear endomorphism
$\phi: u \mapsto u^p$ and a natural filtration $\Fil^t S$ induced by the divided
power structure such that $\phi(\Fil^tS)\subseteq p^tS$ for $0\leq t\leq p-1$. We set $\phi_t=p^{-t}\phi|_{\Fil^t S}$ and
$c=\phi_1(E(u)) \in S^{\times}$. Let $N$ denote the $W$-linear
derivation on $S$ defined by the formula $N(u)=-u$. We also define a filtration, $\phi$,
$\phi_t$ and $N$ on $S_n=S/p^nS$ similarly. 

Let $r\in\{0,\ldots, p-2\}$ be an integer. Set $'\Mod^{r,\phi,N}_{/S}$ to
be the category consisting of the following data:

\begin{itemize}
\item an $S$-module $\cM$ and its $S$-submodule $\Fil^r \cM$ containing $\Fil^r S\cdot\cM$,
\item a $\phi$-semilinear map $\phi_r: \Fil^r\cM \to \cM$ satisfying
      \[\phi_r(s_r m)=\phi_r(s_r)\phi(m)\] for any $s_r \in \Fil^r S$
      and $m\in \cM$, where we set $\phi(m)=c^{-r}\phi_r(E(u)^r m)$,
\item a $W$-linear map $N:\cM\to\cM$ such that
\begin{itemize}
\item $N(sm)=N(s)m+sN(m)$ for any $s\in S$ and $m\in\cM$,
\item $E(u)N(\Fil^r\cM)\subseteq \Fil^r\cM$,
\item the following diagram is commutative:
\[
\begin{CD}
\Fil^r\cM @>{\phi_r}>> \cM\\
@V{E(u)N}VV @VV{cN}V\\
\Fil^r \cM @>>{\phi_r}> \cM,
\end{CD}
\]
\end{itemize} 
\end{itemize}
and the morphisms of $'\Mod^{r,\phi,N}_{/S}$ are defined to be the
$S$-linear maps preserving $\Fil^r$ and commuting with $\phi_r$ and
$N$. The category defined in the same way but dropping the data $N$ is
denoted by $'\Mod^{r,\phi}_{/S}$. These categories have obvious
notions of exact sequences. Let $\Mod^{r,\phi,N}_{/S_1}$ denote the full subcategory of
$'\Mod^{r,\phi,N}_{/S}$ consisting of $\cM$ such that $\cM$ is free of
finite rank over $S_1$ and generated as an $S_1$-module
by the image of $\phi_r$. 
We write $\Mod^{r,\phi,N}_{/S_\infty}$ for the smallest full
subcategory which contains $\Mod^{r,\phi,N}_{/S_1}$ and is stable under
extensions. We let $\Mod^{r,\phi,N}_{/S}$ denote the full subcategory consisting of $\cM$ such that
\begin{itemize}
\item the $S$-module $\cM$ is free of finite rank and generated by the
      image of $\phi_r$,
\item the quotient $\cM/\Fil^r\cM$ is $p$-torsion free.
\end{itemize}
We define full subcategories $\Mod^{r,\phi}_{/S_1}$,
$\Mod^{r,\phi}_{/S_\infty}$ and $\Mod^{r,\phi}_{/S}$ of
$'\Mod^{r,\phi}_{/S}$ in a similar way. For
$\hat{\cM}\in\Mod^{r,\phi,N}_{/S}$ ({\it resp.} $\Mod^{r,\phi}_{/S}$), the
quotient $\hat{\cM}/p^n\hat{\cM}$ has a natural
structure as an object of $\Mod^{r,\phi,N}_{/S_\infty}$ ({\it resp.} $\Mod^{r,\phi}_{/S_\infty}$).

For $p$-torsion objects, we also have the following categories. Consider the
$k$-algebra $k[u]/(u^{ep})\cong S_1/\Fil^pS_1$ and let this algebra be denoted by
$\Stil$. The algebra $\Stil$ is equipped with the natural filtration, $\phi$ and $N$
induced by those of $S$. Namely, $\Fil^t\Stil=u^{et}\Stil$,
$\phi(u)=u^p$ and $N(u)=-u$. Let $'\Mod^{r,\phi,N}_{/\Stil}$ denote the
category consisting of the following data:
\begin{itemize}
\item an $\Stil$-module $\cmtil$ and its $\Stil$-submodule $\Fil^r \cmtil$ containing $u^{er}\cmtil$,
\item a $\phi$-semilinear map $\phi_r: \Fil^r\cmtil \to \cmtil$,
\item a $k$-linear map $N:\cmtil\to\cmtil$ such that
\begin{itemize}
\item $N(sm)=N(s)m+sN(m)$ for any $s\in \Stil$ and $m\in\cmtil$,
\item $u^e N(\Fil^r\cmtil)\subseteq \Fil^r\cmtil$,
\item the following diagram is commutative:
\[
\begin{CD}
\Fil^r\cmtil @>{\phi_r}>> \cmtil\\
@V{u^e N}VV @VV{cN}V\\
\Fil^r \cmtil @>>{\phi_r}> \cmtil,
\end{CD}
\]
\end{itemize} 
\end{itemize}
and whose morphisms are defined as before. Its full subcategory
$\Mod^{r,\phi,N}_{/\Stil}$ is
defined by the following condition:
\begin{itemize}
\item As an $\Stil$-module, $\cmtil$ is free of finite rank and
      generated by the image of $\phi_r$.
\end{itemize}
We define categories $'\Mod^{r,\phi}_{/\Stil}$ and
$\Mod^{r,\phi}_{/\Stil}$ similarly. Then we can show as in the proof of \cite[Proposition 2.2.2.1]{B_ENS} that the natural functor 
$\cM\mapsto\cM/\Fil^pS\cdot\cM$ induces equivalences of categories
$T:\Mod^{r,\phi,N}_{/S_1}\to\Mod^{r,\phi,N}_{/\Stil}$ and $T_0:\Mod^{r,\phi}_{/S_1}\to\Mod^{r,\phi}_{/\Stil}$.

For $r=0$, let $\Mod^{\phi}_{/W_\infty}$ be the category consisting of the following data:
\begin{itemize}
\item a finite torsion $W$-module $\tilde{M}$,
\item a $\sigma$-semilinear automorphism $\phi:\tilde{M}\to\tilde{M}$.
\end{itemize}
Let $\kappa$ be the kernel of the natural surjection $S\to W$ defined by $u\mapsto 0$. Since $\Tor^{S}_{1}(\cM,S/\kappa S)=0$ for any $\cM\in\Mod^{0,\phi}_{/S_\infty}$, the proofs of \cite[Lemme 2.2.7, Proposition 2.2.8]{Ca2} work also for the category $\Mod^{0,\phi,N}_{/S_\infty}$ and we have a commutative diagram of categories
\[
\xymatrix{
\Mod^{0,\phi,N}_{/S_\infty} \ar[rd]\ar[r] & \Mod^{0,\phi}_{/S_\infty} \ar[d] \\
& \Mod^{\phi}_{/W_\infty},
}
\]
where the downward arrows and horizontal arrow are defined by $\cM\mapsto \cM/\kappa\cM$ and forgetting $N$ respectively and these three arrows are equivalences of categories.

Let $\Acrys$ and $\Ast$ be $p$-adic period
rings. These are constructed as follows. Put $\tokbar=\okbar/p\okbar$. Set $R$ to be the ring
\[
R=\varprojlim(\tokbar\gets\tokbar\gets\cdots),
\]
where every arrow is the $p$-power map. For an element
$x=(x_i)_{i\in\mathbb{Z}_{\geq 0}}\in R$ and an integer $n\geq 0$, we set
\[
x^{(n)}=\lim_{m\to\infty}\hat{x}_{n+m}^{p^m}\in\mathcal{O}_{\mathbb{C}},
\]
where $\hat{x}_i$ is a lift of $x_i$ in $\okbar$ and $\oc$ is the
$p$-adic completion of $\okbar$. Let $v_p$ denote the valuation of $\oc$
normalized as $v_p(p)=1$. Then the ring $R$ is a complete valuation ring whose
valuation of an element $x\in R$ is given by $v_R(x)=v_p(x^{(0)})$. We define a natural ring
homomorphism $\theta$ by
\begin{align*}
\theta: W(R)&\to \oc\\
(x_0,x_1,\ldots)&\mapsto \sum_{n\geq 0}p^n x_{n}^{(n)}.
\end{align*}
Then $\Acrys$ is the $p$-adic completion of the divided power envelope of
$W(R)$ with respect to the principal ideal $\Ker(\theta)$ and $\Ast$ is the $p$-adic
completion of the divided power polynomial ring 
$\Acrys\langle X\rangle$ over $\Acrys$. We set
$A_{\mathrm{crys},\infty}=\Acrys\otimes_W K_0/W$ and
$\hat{A}_{\mathrm{st},\infty}=\Ast\otimes_W K_0/W$. Put
$\underline{\pi}=(\pi_n)_{n\in\mathbb{Z}_{\geq 0}}\in R$, where we abusively let
$\pi_n$ denote the image of $\pi_n\in\okbar$ in $\tokbar$. These rings are considered
as $S$-algebras by the ring homomorphisms $S\to\Ast$ and $\Ast\to\Acrys$
which are defined by $u\mapsto[\underline{\pi}]/(1+X)$ and $X\mapsto 0$, respectively.
The ring $\Acrys$ is endowed with a natural filtration induced by the
divided power structure, a natural Frobenius endomorphism $\phi$ and the
$\phi$-semilinear map $\phi_t=p^{-t}\phi|_{\Fil^t\Acrys}$. With
these structures, $\Acrys$ and $A_{\mathrm{crys},\infty}$ are considered as objects of $'\Mod^{r,\phi}_{/S}$. Moreover, the
absolute Galois group $G_K$ acts naturally on these two rings. As for $\Ast$,
its filtration is defined by
\[
\Fil^t\Ast=\left\{\sum_{i\geq
0}a_i\frac{X^i}{i!}\ \left|\ a_i\in\Fil^{t-i}\Acrys,\ \underset{i\to\infty}{\lim}a_i=0\right.\right\}
\]
and the Frobenius structure of $\Acrys$ extends to $\Ast$ by 
\begin{align*}
\phi(X)&=(1+X)^p-1,\\
\phi_t&=p^{-t}\phi|_{\Fil^t\Ast}.
\end{align*}
We write $N$ also for the $\Acrys$-linear derivation on $\Ast$ defined by $N(X)=1+X$. The rings $\Ast$ and $\hat{A}_{\mathrm{st},\infty}$ are objects of
$'\Mod^{r,\phi,N}_{/S}$. The $G_K$-action on $\Acrys$ naturally extends to an action
on $\Ast$. Indeed, the action of $g\in G_K$ on $\Ast$ is defined by the formula
\[
g(X)=[\underline{\varepsilon}(g)](1+X)-1,
\]
where $g(\pi_n)=\varepsilon_n(g)\pi_n$ and 
$\underline{\varepsilon}(g)=(\varepsilon_n(g))_{n\in\mathbb{Z}_{\geq 0}}\in R$ with the abusive notation as above.

These rings have other descriptions, as follows. For an integer $n\geq 1$, put
$W_n=W/p^nW$ and let
$W_n(\tokbar)$ be the ring of Witt vectors
of length $n$ associated to $\tokbar$. We define a $W_n$-algebra
structure on $W_n(\tokbar)$ by twisting the natural $W_n$-algebra
structure by $\sigma^{-n}$. Then the natural ring homomorphism
\begin{align*}
\theta_n:W_n(\tokbar)&\to\okbar/p^n\okbar\\
(a_0,\ldots,a_{n-1})&\mapsto \sum_{i=0}^{n-1}p^{i}\hat{a}^{p^{n-i}}_{i},
\end{align*}
where $\hat{a}_i$ is a lift of $a_i$ in $\okbar$, is $W_n$-linear. Let
us denote $W^{\PD}_{n}(\tokbar)$ the divided power envelope of
$W_n(\tokbar)$ with respect to the ideal $\Ker(\theta_n)$. This
ring is considered as an
$S$-algebra by $u\mapsto [\pi_n]$. This
ring also has a natural filtration defined by the divided power
structure, and a natural $G_K$-module structure. The Frobenius endomorphism of the ring
of Witt vectors induces on this ring a $\phi$-semilinear Frobenius endomorphism, which is
denoted also by $\phi$. Then, by the $S$-linear transition maps
\begin{align*}
W^{\PD}_{n+1}(\tokbar)&\to W^{\PD}_{n}(\tokbar)\\
(a_0,\ldots,a_{n})&\mapsto ({a}_{0}^{p},\ldots,{a}_{n-1}^{p}),
\end{align*}
these $S$-algebras form a projective system compatible with all
the structures. Using this transition map, a $\phi$-semilinear map
\[
\phi_r:\Fil^r W^{\PD}_{n}(\tokbar)\to W^{\PD}_{n}(\tokbar)
\]
is defined by setting $\phi_r(x)$ to be the image of
$p^{-r}\phi(\hat{x})$, where $\hat{x}$ is a lift of $x$ in 
$\Fil^r W^{\PD}_{n+r}(\tokbar)$. By definition, the maps $\phi_r$
are also compatible with the transition maps. The $S$-algebra
$W^{\PD}_{n}(\tokbar)$ is considered as an object of
$'\Mod^{r,\phi}_{/S}$. Then we have a natural isomorphism in $'\Mod^{r,\phi}_{/S}$
\begin{align*}
\Acrys/p^n\Acrys&\to W^{\PD}_{n}(\tokbar)\\
(x_0,\ldots,x_{n-1})&\mapsto (x_{0,n},\ldots,x_{n-1,n}),
\end{align*}
where we set $x_i=(x_{i,k})_{k\in\mathbb{Z}_{\geq 0}}$ for $x_i\in R$.

Similarly, the divided
power polynomial ring $W^{\PD}_{n}(\tokbar)\langle X\rangle$
over
$W^{\PD}_{n}(\tokbar)$ is considered as an $S$-algebra by
$u\mapsto [\pi_n]/(1+X)$. This ring has a natural filtration coming from
the divided power structure. We define a $G_K$-action on this ring by
\[
g(X)=[\varepsilon_n(g)](1+X)-1.
\]
We also define a $\phi$-semilinear
Frobenius endomorphism, which we also write as $\phi$, by $\phi(X)=(1+X)^p-1$
and a $W^{\PD}_{n}(\tokbar)$-linear derivation $N$ by
$N(X)=1+X$. These rings form a projective system of
$S$-algebras compatible with all the structures by the transition maps
defined by the maps above and $X\mapsto X$. We define $\phi$-semilinear maps
\[
\phi_r:\Fil^r W^{\PD}_{n}(\tokbar)\langle X\rangle\to W^{\PD}_{n}(\tokbar)\langle X\rangle
\]
compatible with the transition maps as before. The
$S$-algebra $W^{\PD}_{n}(\tokbar)\langle X\rangle$ is considered
as an object of $'\Mod^{r,\phi,N}_{/S}$ and there exists a natural isomorphism in
$'\Mod^{r,\phi,N}_{/S}$
\begin{align*}
\Ast/p^n\Ast&\to W^{\PD}_{n}(\tokbar)\langle X\rangle\\
(x_0,\ldots,x_{n-1})&\mapsto (x_{0,n},\ldots,x_{n-1,n})\\
X&\mapsto X
\end{align*}
which is $G_K$-linear.

Put $K_n=K(\pi_n)$ and  $K_\infty=\cup_n K_n$. For
$\cM\in{}\Mod^{r,\phi,N}_{/S_\infty}$, we define a $G_K$-module $\Tsta(\cM)$ to be
\[
\Tsta(\cM)=\Hom_{S,\Fil^r,\phi_r,N}(\cM,\hat{A}_{\mathrm{st},\infty}).
\]
When $\cM$ is killed by $p^n$, we have a natural identification of $G_K$-modules
\[
\Tsta(\cM)=\Hom_{S,\Fil^r,\phi_r,N}(\cM,W^{\PD}_{n}(\tokbar)\langle X\rangle).
\]
Note that the $G_K$-module on the right-hand side is independent of the
choice of $\pi_k$ for $k>n$. Since the natural map
\begin{align*}
W^{\PD}_{n}(\tokbar)\langle X\rangle &\to
 W^{\PD}_{n}(\tokbar)\\
X&\mapsto 0
\end{align*}
is $G_{K_n}$-linear, we also have a $G_{K_n}$-linear
isomorphism (\cite[Lemme 2.3.1.1]{B_ss})
\[
\Tsta(\cM)|_{G_{K_n}}\to
\Hom_{S,\Fil^r,\phi_r}(\cM,W^{\PD}_{n}(\tokbar)).
\]
On the other hand, for $r=0$, the proof of \cite[Proposition 2.3.13]{Ca2} shows that the $G_K$-module $\Tsta(\cM)$ is unramified for any $\cM\in\Mod^{0,\phi,N}_{/S_\infty}$.

A variant of filtered $(\phi_r,N)$-modules over $S$ is also introduced
by Breuil and Kisin, and developed also
by Caruso and Liu (see for example \cite{Ki_BT}, \cite{Ki_Fcrys}, \cite{Li}, \cite{CL}). Put $\SG=W[[u]]$ and let $\phi:\SG\to\SG$ be the
$\sigma$-semilinear Frobenius endomorphism defined by $\phi(u)=u^p$. Let
$'\Mod^{r,\phi}_{/\SG}$ denote the category consisting of the following data:
\begin{itemize}
\item an $\SG$-module $\SGm$,
\item a $\phi$-semilinear map $\SGm\to\SGm$, which is denoted also by
      $\phi$, such that the cokernel of the map
      $1\otimes\phi:\phi^{*}\SGm\to\SGm$, where we set $\phi^{*}\SGm=\SG\otimes_{\phi,\SG}\SGm$, is killed by $E(u)^r$,
\end{itemize}
and whose morphisms are defined as before. The full subcategory of
$'\Mod^{r,\phi}_{/\SG}$ consisting of $\SGm$ such that $\SGm$ is free of
finite rank over $\SG/p\SG$ ({\it resp.} over $\SG$) is denoted by
$\Mod^{r,\phi}_{/\SG_1}$ ({\it resp.} $\Mod^{r,\phi}_{/\SG}$). We let
$\Mod^{r,\phi}_{/\SG_\infty}$ denote the smallest
full subcategory which contains $\Mod^{r,\phi}_{/\SG_1}$ and is stable under
extensions, as before. Then we have an exact functor (\cite[Proposition
2.1.2]{CL}, see also \cite[Proposition 1.1.11]{Ki_BT})
\[
\cM_{\SG_\infty}:\Mod^{r,\phi}_{/\SG_\infty}\to \Mod^{r,\phi}_{/S_\infty}.
\]
For $\SGm\in\Mod^{r,\phi}_{/\SG_\infty}$, the filtered $\phi_r$-module
$\cM=\cM_{\SG_\infty}(\SGm)$ over $S$ is defined as follows:
\begin{itemize}
\item $\cM=S\otimes_{\phi,\SG}\mathfrak{M}$,
\item
      $\Fil^r\cM=\Ker(\cM\overset{1\otimes\phi}{\to}S\otimes_{\SG}\mathfrak{M}\to (S/\Fil^r S)\otimes_{\SG}\mathfrak{M})$,
\item $\phi_r:\Fil^r \cM\overset{1\otimes\phi}{\to}\Fil^r S\otimes_{\SG}\mathfrak{M}\overset{\phi_r\otimes 1}{\to}S\otimes_{\phi,\SG}\mathfrak{M}=\cM$.
\end{itemize}
We write $\cM_{\SG}$ for the
functor $\Mod^{r,\phi}_{/\SG}\to\Mod^{r,\phi}_{/S}$ defined similarly.

%-----------------------------------------------------------------------

%-----------------------------------------------------------------------

\section{Filtered $\phi_r$-modules over $\Sigma$}

In this section, we define another variant $\Mod^{r,\phi}_{/\Sigma_\infty}$ of the category $\Mod^{r,\phi}_{/S_\infty}$ over a subring $\Sigma$ of the ring $S$, and prove that they are categorically equivalent.

Let $p$ be a rational prime and $r$ be an integer such that $0\leq
r<p-1$. Consider the $W$-algebra $\Sigma=W[[u,Y]]/(E(u)^p-pY)$ as in
\cite[Subsection 3.2]{B_ss}. We regard $\Sigma$ as a subring of $S$ by the map sending $Y$ to
$E(u)^p/p$. Then the
element $c=\phi_1(E(u))\in S^\times$ is contained in $\Sigma^\times$. We define on $\Sigma$ a
$\sigma$-semilinear Frobenius endomorphism $\phi$ by $\phi(u)=u^p$ and
$\phi(Y)=p^{p-1}c^p$. Put $\Fil^t\Sigma=(E(u)^t,Y)$ for 
$0\leq t\leq p-1$ and $\Fil^p\Sigma=(Y)$. Then we have
$\phi(\Fil^t\Sigma)\subseteq p^t\Sigma$ for $0\leq t\leq p-1$. We put
$\phi_t=p^{-t}\phi|_{\Fil^t\Sigma}$. We also set
$\Sigma_n=\Sigma/p^n\Sigma$ and put on this ring the natural structures induced
by those of $\Sigma$.

We define a category $'\Mod^{r,\phi}_{/\Sigma}$ of filtered
$\phi_r$-modules over $\Sigma$ to be the category consisting of the
following data:
\begin{itemize}
\item a $\Sigma$-module $M$ and its $\Sigma$-submodule $\Fil^rM$
      containing $\Fil^r\Sigma\cdot M$,
\item a $\phi$-semilinear map $\phi_r:\Fil^rM\to M$ satisfying
      $\phi_r(s_rm)=\phi_r(s_r)\phi(m)$ for any $s_r\in\Fil^r\Sigma$ and
      $m\in M$, where we set $\phi(m)=c^{-r}\phi_r(E(u)^rm)$,
\end{itemize}
and whose morphisms are defined in the same manner as $'\Mod^{r,\phi}_{/S}$. This
category has a natural notion of exact sequences. We define its full
subcategory $\Mod^{r,\phi}_{/\Sigma_1}$ to be the category consisting of $M$ which is
free of finite rank and generated by the image of $\phi_r$ as a
$\Sigma_1$-module. We also let
$\Mod^{r,\phi}_{/\Sigma_\infty}$ denote the smallest full subcategory of
$'\Mod^{r,\phi}_{/\Sigma}$ which contains $\Mod^{r,\phi}_{/\Sigma_1}$
and is stable under extensions. Moreover, we define a full subcategory $\Mod^{r,\phi}_{/\Sigma}$ of
$'\Mod^{r,\phi}_{/\Sigma}$ to be the category consisting of $M$ such
that
\begin{itemize}
\item the $\Sigma$-module $M$ is free of finite rank and generated by the
      image of $\phi_r$,
\item the quotient $M/\Fil^rM$ is $p$-torsion free.
\end{itemize}
Then we see that for $\hat{M}\in\Mod^{r,\phi}_{/\Sigma}$, the quotient
$\hat{M}/p^n\hat{M}$ is naturally considered as an object of $\Mod^{r,\phi}_{/\Sigma_\infty}$.

The natural ring isomorphism $\Sigma_1/\Fil^p\Sigma_1\cong\Stil$ defines
a functor $T_{0,\Sigma}:\Mod^{r,\phi}_{/\Sigma_1}\to\Mod^{r,\phi}_{/\Stil}$ by
$M\mapsto M/\Fil^p\Sigma_1\cdot M$. Then just as in the case of the functor $T_0:\Mod^{r,\phi}_{/S_1}\to\Mod^{r,\phi}_{/\Stil}$ (\cite[Proposition 2.2.2.1]{B_ENS}), we can show the following lemma.

\begin{lem}\label{ptoreq}
The functor $T_{0,\Sigma}:\Mod^{r,\phi}_{/\Sigma_1}\to\Mod^{r,\phi}_{/\Stil}$ is an equivalence of categories.
\end{lem}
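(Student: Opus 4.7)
The plan is to adapt Breuil's argument for the functor $T_0$ (\cite[Proposition 2.2.2.1]{B_ENS}). The only structural input needed is the following computation: for $M\in\Mod^{r,\phi}_{/\Sigma_1}$ and $m\in M$, since $Y\in\Fil^p\Sigma$ and $\phi(Y)=p^{p-1}c^p$,
\[
\phi_r(Ym)=p^{-r}\phi(Y)\phi(m)=p^{p-1-r}c^p\phi(m),
\]
which vanishes in $\Sigma_1$ because $r<p-1$ forces $p\mid p^{p-1-r}$. Thus $\phi_r$ annihilates $YM=\Fil^p\Sigma_1\cdot M$, which is precisely the kernel of $T_{0,\Sigma}$ (using $\Sigma_1/\Fil^p\Sigma_1\cong\Stil$).

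First I would prove faithfulness. If $f,g:M\to M'$ are morphisms in $\Mod^{r,\phi}_{/\Sigma_1}$ with $T_{0,\Sigma}(f)=T_{0,\Sigma}(g)$, then $h=f-g$ is a morphism with image in $YM'$. For $x\in\Fil^r M$ the identity $h(\phi_r(x))=\phi_r(h(x))=0$ together with the generation of $M$ by $\phi_r(\Fil^r M)$ forces $h=0$.

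Next I would handle fullness and essential surjectivity simultaneously by an explicit lifting construction. Given $\tilde{M}\in\Mod^{r,\phi}_{/\Stil}$, choose a basis $(\tilde{e}_i)$ and elements $\tilde{f}_i\in\Fil^r\tilde{M}$ with $\phi_r(\tilde{f}_i)=\tilde{e}_i$, which is possible by the generation hypothesis; write $\tilde{f}_i=\sum\tilde{a}_{ij}\tilde{e}_j$ and lift the matrix $(\tilde{a}_{ij})$ arbitrarily to $(a_{ij})$ over $\Sigma_1$. Set $M=\bigoplus\Sigma_1 e_i$, $f_i=\sum a_{ij}e_j$, and
\[
\Fil^r M=\Fil^r\Sigma_1\cdot M+\textstyle\sum_i\Sigma_1 f_i,
\]
and prescribe $\phi_r(f_i)=e_i$, extended to $\Fil^r\Sigma_1\cdot M$ by the Frobenius compatibility rule $\phi_r(s_rm)=\phi_r(s_r)\phi(m)$ where $\phi(m)=c^{-r}\phi_r(E(u)^rm)$. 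An entirely parallel lifting argument, starting from the matrix of a given $\tilde{f}:T_{0,\Sigma}(M)\to T_{0,\Sigma}(M')$ in compatible bases, yields fullness.

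The main obstacle is the well-definedness of $\phi_r$ on the proposed $\Fil^r M$: any ambiguity lives on the intersection $(\sum\Sigma_1 f_i)\cap(\Fil^r\Sigma_1\cdot M)$. After reducing modulo $Y$, the ambiguity reduces to the compatibility already present in $\tilde{M}$; the residual error lies in $YM$ and is killed by the vanishing $\phi_r(YM)=0$. Once well-definedness is established, the generation condition $M=\Sigma_1\phi_r(\Fil^r M)$ required for membership in $\Mod^{r,\phi}_{/\Sigma_1}$ follows by Nakayama from the corresponding statement for $\tilde{M}$ applied to the $(Y)$-adic filtration on $M$.
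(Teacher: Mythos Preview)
Your proposal is correct and follows exactly the route the paper indicates: the paper gives no argument beyond the remark that one proceeds ``just as in the case of the functor $T_0$'' via \cite[Proposition 2.2.2.1]{B_ENS}, and your sketch faithfully adapts that proof, correctly isolating the one new ingredient needed over $\Sigma_1$, namely that $\phi_r(Ym)=p^{p-1-r}c^p\phi(m)=0$ because $r<p-1$. One small phrasing point worth tightening: in the lifting step, rather than defining $\phi_r$ separately on $\sum_i\Sigma_1 f_i$ and on $\Fil^r\Sigma_1\cdot M$ and then reconciling on the intersection, it is cleaner (and avoids an apparent well-definedness issue on the first summand alone) to note that $\phi(\Sigma_1)\subseteq\Stil\subseteq\Sigma_1$, so one may define $\phi_r$ on all of $\Fil^rM$ as the composite $\Fil^rM\twoheadrightarrow\Fil^rM/YM\cong\Fil^r\tilde M\xrightarrow{\phi_r}\tilde M\hookrightarrow M$, where the last map is the $\Stil$-linear section sending $\tilde e_i\mapsto e_i$; this is automatically $\phi$-semilinear and visibly satisfies $\phi_r(f_i)=e_i$ and $\phi_r(YM)=0$.
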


On the other hand, \cite[Proposition 2.2.1.3]{B_ss} and
Nakayama's lemma show the following.

\begin{lem}\label{adaptedp}
Let $M$ be an object of $\Mod^{r,\phi}_{/\Sigma_1}$ of rank $d$ over
 $\Sigma_1$. Then there exists a basis $\{e_1,\ldots,e_d\}$ of $M$ such
 that $\Fil^rM=\Sigma_1 u^{r_1}e_1\oplus\cdots\oplus \Sigma_1 u^{r_d}e_d+\Fil^p\Sigma_1\cdot M$ for some
 integers $r_1,\ldots,r_d$ with $0\leq r_i\leq er$ for any $i$.
\end{lem}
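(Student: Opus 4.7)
The plan is to reduce to the analogous statement over the ring $\Stil$ via the equivalence of categories of Lemma \ref{ptoreq}, apply Breuil's structure theorem \cite[Proposition 2.2.1.3]{B_ss} to produce an adapted basis there, and then lift this basis back to $M$ using Nakayama's lemma applied to the local ring $\Sigma_1$.

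First I would set $\cmtil = T_{0,\Sigma}(M) = M/\Fil^p\Sigma_1\cdot M$ and note that, since $\Fil^p\Sigma_1 \subseteq \Fil^r\Sigma_1$ for $r\leq p-1$, the induced map $\Fil^r M \twoheadrightarrow \Fil^r\cmtil$ is surjective and the filtration on $\cmtil$ coming from the functor $T_{0,\Sigma}$ coincides with the image filtration. Applying \cite[Proposition 2.2.1.3]{B_ss} to $\cmtil\in\Mod^{r,\phi}_{/\Stil}$ then yields a basis $\{\tilde{e}_1,\ldots,\tilde{e}_d\}$ of $\cmtil$ over $\Stil$ and integers $0\leq r_i\leq er$ with
\[
\Fil^r\cmtil = \bigoplus_{i=1}^{d}\Stil u^{r_i}\tilde{e}_i.
\]

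Next I would lift: choose arbitrary lifts $e_i\in M$ of $\tilde{e}_i$. The ring $\Sigma_1 \cong k[[u,Y]]/(u^{ep})$ is local with maximal ideal $(u,Y)$, and $M$ is free of rank $d$ over $\Sigma_1$. Since the $\tilde{e}_i$ generate $M/(u,Y)M\cong k^d$, Nakayama's lemma forces $\{e_1,\ldots,e_d\}$ to be a $\Sigma_1$-basis of $M$, and in particular the sum of the submodules $\Sigma_1 u^{r_i}e_i$ is automatically direct because they sit in distinct summands of $M=\bigoplus\Sigma_1 e_i$.

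Finally, I would verify the asserted filtration identity. Lifting each $u^{r_i}\tilde{e}_i$ to some $m_i\in\Fil^r M$ via the surjection above, the difference $u^{r_i}e_i-m_i$ lies in $\Fil^p\Sigma_1\cdot M \subseteq \Fil^r M$, whence $u^{r_i}e_i\in\Fil^r M$ and the inclusion $\supseteq$ holds. Conversely, any $x\in\Fil^r M$ has image $\sum\tilde{s}_i u^{r_i}\tilde{e}_i$ in $\Fil^r\cmtil$; lifting the $\tilde{s}_i$ to $s_i\in\Sigma_1$ gives $x-\sum s_i u^{r_i}e_i\in\Fil^p\Sigma_1\cdot M$, which completes the reverse inclusion. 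The only real subtlety is ensuring that Breuil's theorem applies as stated to $\cmtil$, which it does once one checks that the filtration on $\cmtil$ induced by $T_{0,\Sigma}$ is precisely the one used in \cite{B_ss}; once this is in hand, the rest is a routine Nakayama lift.
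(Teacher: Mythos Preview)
Your proposal is correct and is exactly the approach the paper indicates: the paper's proof is the single sentence ``\cite[Proposition 2.2.1.3]{B_ss} and Nakayama's lemma show the following,'' and you have simply spelled out those two steps (pass to $\Stil$ via $T_{0,\Sigma}$, apply Breuil's adapted-basis result, lift by Nakayama over the local ring $\Sigma_1$). Nothing further is needed.
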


Then we can show the following lemma just as in the proof of \cite[Lemme
2.3.1.3]{B_ss}.

\begin{lem}\label{ACexact}
The functor 
\[
M\mapsto \Hom_{\Sigma,\Fil^r,\phi_r}(M,A_{\mathrm{crys},\infty})
\]
from $\Mod^{r,\phi}_{/\Sigma_\infty}$ to the category of
 $G_{K_{\infty}}$-modules is exact.
\end{lem}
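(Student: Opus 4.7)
The plan is to mimic the proof of \cite[Lemme 2.3.1.3]{B_ss} with $\Sigma$ in place of $S$. Since $\Hom$ is automatically left exact and the restriction map is $G_{K_\infty}$-equivariant, everything reduces to showing that for any short exact sequence $0\to M'\to M\to M''\to 0$ in $\Mod^{r,\phi}_{/\Sigma_\infty}$, every morphism $f':M'\to A_{\mathrm{crys},\infty}$ of filtered $\phi_r$-modules over $\Sigma$ lifts to a morphism $f:M\to A_{\mathrm{crys},\infty}$.

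First I would reduce to the $p$-torsion case. Because $\Mod^{r,\phi}_{/\Sigma_\infty}$ is by definition the smallest full subcategory of $'\Mod^{r,\phi}_{/\Sigma}$ containing $\Mod^{r,\phi}_{/\Sigma_1}$ and stable under extensions, a standard dévissage combined with the snake lemma reduces us to the case where $M'$, $M$, $M''$ all lie in $\Mod^{r,\phi}_{/\Sigma_1}$. In that situation we may replace $A_{\mathrm{crys},\infty}$ by its $p$-torsion $\Acrys/p\Acrys$, and the isomorphism $\Acrys/p\Acrys\cong W^{\PD}_{1}(\tokbar)$ recalled in Section~2 identifies the target with a ring whose Frobenius and filtration structures are very explicit.

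Next I would invoke Lemma \ref{adaptedp} to choose an ordered basis $\{e_1,\ldots,e_d\}$ of $M$ with $\Fil^r M=\bigoplus_i\Sigma_1 u^{r_i}e_i+\Fil^p\Sigma_1\cdot M$, arranged so that $M'$ is spanned by a subset $\{e_{i_1},\ldots,e_{i_{d'}}\}$. Writing $\phi_r(u^{r_i}e_i)=\sum_j a_{ij}e_j$ with $(a_{ij})\in\mathrm{GL}_d(\Sigma_1)$ (invertibility because $M$ is generated by the image of $\phi_r$), a morphism $f:M\to W^{\PD}_{1}(\tokbar)$ corresponds to a tuple $(x_i)$ of elements of $W^{\PD}_{1}(\tokbar)$ satisfying the system of Frobenius equations
\[
\phi_r(u^{r_i}x_i)=\sum_j a_{ij}x_j,
\]
and restriction to $M'$ picks out the subtuple indexed by $\{i_j\}$. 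Hence the extension problem becomes: given the $x_{i_j}$ provided by $f'$, solve the remaining equations for the remaining $x_i$.

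The main obstacle is this lifting step, and it is handled exactly as in Breuil's $S$-argument: one performs successive approximation along the divided power filtration of $W^{\PD}_{1}(\tokbar)$, using at each stage the surjectivity of the Frobenius on $\tokbar$ (equivalently, the perfectness of $R$) to solve the resulting scalar equations, while the inclusion $\phi(\Fil^1)\subseteq p\Acrys$ forces the corrections to terminate modulo $p$. The only point that needs checking in passing from $S$ to $\Sigma$ is that the equations with coefficients in $\Sigma_1$ already have their solutions inside $W^{\PD}_{1}(\tokbar)$; this is automatic because the map $\Sigma\hookrightarrow S\to\Acrys$ respects all the structures, so no new obstruction appears. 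Granted this, the extended tuple $(x_i)$ defines the desired lift $f$, and surjectivity of the restriction map follows.
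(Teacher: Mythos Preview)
Your approach is the paper's: the paper simply invokes \cite[Lemme 2.3.1.3]{B_ss}, and you are sketching that argument over $\Sigma$ instead of $S$. The d\'evissage to $p$-torsion and the successive-approximation step using surjectivity of Frobenius on $\tokbar$ are exactly right.

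There is, however, one claim that does not hold as stated. You assert that the adapted basis of Lemma~\ref{adaptedp} for $M$ can be ``arranged so that $M'$ is spanned by a subset''. This fails in general: if $\{f_1,\ldots,f_d\}$ is adapted for $M$ with $\Fil^rM=\bigoplus_i u^{s_i}\Sigma_1 f_i+\Fil^p\Sigma_1\cdot M$ and some $f_{i_0}\in M'$, then (computing $\Fil^rM\cap M'$) the exponent $s_{i_0}$ must agree with one of the exponents $r'_j$ in an adapted basis of $M'$. But the exponents $\{s_i\}$ are the elementary divisors of $M/\Fil^rM$, and these depend on the extension class of $0\to M'/\Fil^rM'\to M/\Fil^rM\to M''/\Fil^rM''\to 0$; when this extension is nonsplit the $s_i$ need not contain any $r'_j$. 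For a concrete instance (over $\Stil$ with $e=1$, $r=3$, $p\geq 5$): take $M'=\Stil e_1$ with $\Fil^rM'=u\Stil e_1$, $M''=\Stil\bar e_2$ with $\Fil^rM''=u^2\Stil\bar e_2$, and $\Fil^rM=\Stil(u^2e_2+e_1)+\Stil\,ue_1$; then $M/\Fil^rM\cong k[u]/(u^3)$ has elementary divisors $\{0,3\}$, neither equal to $r'_1=1$.

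The fix is what Breuil actually does and costs nothing: choose adapted bases of $M'$ and $M''$ separately, lift each $u^{r''_j}e''_j\in\Fil^rM''$ to some $\alpha_j\in\Fil^rM$ (using strict exactness of $\Fil^r$), and take $\{u^{r'_i}e'_i,\alpha_j\}$ as generators of $\Fil^rM$ modulo $\Fil^p\Sigma_1\cdot M$. Their $\phi_r$-images generate $M$, and the transition matrix is block upper-triangular because $\phi_r$ preserves $M'$. Your successive-approximation argument then goes through verbatim with this triangular system in place of the diagonal one.
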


For $M\in\Mod^{r,\phi}_{/\Sigma_1}$, we can show as in the
case of the category $\Mod^{r,\phi}_{/S_1}$ that there is an
isomorphism of $G_{K_1}$-modules
\[
\Hom_{\Sigma,\Fil^r,\phi_r}(M,(\tokbar)^{\PD}) \to \Hom_{\Stil,\Fil^r,\phi_r}(T_{0,\Sigma}(M),\tokbar),
\]
where $\tokbar$ is considered as an object of
$'\Mod^{r,\phi}_{/\Stil}$ by the natural isomorphism
\[
(\tokbar)^{\PD}/\Fil^p(\tokbar)^{\PD}\to \tokbar.
\]
Thus \cite[Lemme 2.3.1.2]{B_ss} implies the following.

\begin{lem}\label{cardinality}
For $M\in\Mod^{r,\phi}_{/\Sigma_1}$, we have
\[
\#\Hom_{\Sigma,\Fil^r,\phi_r}(M,(\tokbar)^{\PD})=p^d,
\]
where $d=\dim_{\Sigma_1}M$.
\end{lem}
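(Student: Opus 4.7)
The plan is to reduce the computation directly to the analogous statement over $\Stil$, which is exactly the content of \cite[Lemme 2.3.1.2]{B_ss}. First I would take the $G_{K_1}$-equivariant isomorphism
\[
\Hom_{\Sigma,\Fil^r,\phi_r}(M,(\tokbar)^{\PD}) \to \Hom_{\Stil,\Fil^r,\phi_r}(T_{0,\Sigma}(M),\tokbar)
\]
displayed just before the lemma, so that it suffices to compute the cardinality of the right-hand side.

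Next, by Lemma \ref{ptoreq}, the functor $T_{0,\Sigma}$ is an equivalence of categories, so $T_{0,\Sigma}(M)$ is a free $\Stil$-module of rank $d$ which is generated by the image of $\phi_r$; moreover, the filtration of $T_{0,\Sigma}(M)$ inherits an adapted basis from Lemma \ref{adaptedp}, reducing to the setting of \cite[Lemme 2.3.1.2]{B_ss}. Applying that lemma yields $\#\Hom_{\Stil,\Fil^r,\phi_r}(T_{0,\Sigma}(M),\tokbar)=p^d$, which combined with the isomorphism above gives the claim.

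The main things to check carefully are (i) that the isomorphism above really holds in the $\Sigma$-version, i.e., that the composition $\Sigma \to S$ followed by the projection to $\Stil$ behaves identically to the $S$-version when computing Hom into $(\tokbar)^{\PD}$, and (ii) that evaluation at $1$ on $\Fil^p\Sigma_1 \cdot M$ is forced to land in $\Fil^p(\tokbar)^{\PD}$, which is zero modulo the quotient $(\tokbar)^{\PD}/\Fil^p(\tokbar)^{\PD}\cong\tokbar$; both should follow exactly as in the $S_1$-case, using that $\phi_r$ on $\Fil^p\Sigma_1$ is divisible by $p$ and hence dies after reducing modulo $p$. I would expect this compatibility with the $S_1$-case to be the only genuine content to verify; the cardinality count is then immediate from Breuil's lemma.
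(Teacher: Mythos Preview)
Your proposal is correct and follows exactly the paper's approach: the paper simply states the isomorphism
\[
\Hom_{\Sigma,\Fil^r,\phi_r}(M,(\tokbar)^{\PD}) \;\to\; \Hom_{\Stil,\Fil^r,\phi_r}(T_{0,\Sigma}(M),\tokbar)
\]
in the paragraph preceding the lemma and then invokes \cite[Lemme 2.3.1.2]{B_ss} without further comment. Your points (i) and (ii) are precisely the verifications underlying that isomorphism (and are handled ``as in the case of the category $\Mod^{r,\phi}_{/S_1}$'', as the paper puts it), so there is nothing missing.
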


For the category $\Mod^{r,\phi}_{/\Sigma_\infty}$, we have the following lemma.

\begin{lem}\label{adaptedlem}
Let $M$ be in $\Mod^{r,\phi}_{/\Sigma_\infty}$.
Then there exists $\alpha_1,\ldots,\alpha_d\in \Fil^r M$ such
 that $\Fil^r M=\Sigma \alpha_1+\cdots +\Sigma \alpha_d+\Fil^p\Sigma\cdot M$ and
 the elements $e_1=\phi_r(\alpha_1),\ldots,e_d=\phi_r(\alpha_d)$ form a system of generators of $M$.
\end{lem}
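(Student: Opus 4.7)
The plan is to first dispatch the $p$-torsion case $M\in\Mod^{r,\phi}_{/\Sigma_1}$ using Lemma \ref{adaptedp}, and then bootstrap by induction along the extension structure defining $\Mod^{r,\phi}_{/\Sigma_\infty}$.

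For $M\in\Mod^{r,\phi}_{/\Sigma_1}$ of rank $d$, I would apply Lemma \ref{adaptedp} to produce a basis $e_1,\ldots,e_d$ and integers $0\leq r_i\leq er$ with
\[
\Fil^r M=\Sigma_1 u^{r_1}e_1\oplus\cdots\oplus\Sigma_1 u^{r_d}e_d+\Fil^p\Sigma_1\cdot M,
\]
and then set $\alpha_i=u^{r_i}e_i$. The decomposition of $\Fil^r M$ required by the lemma is then immediate. To see that $\phi_r(\alpha_1),\ldots,\phi_r(\alpha_d)$ generate $M$, I would compute for $m\in M$ that $\phi_r(Ym)=\phi_r(Y)\phi(m)=p^{p-r-1}c^p\phi(m)$, which vanishes modulo $p$ since $p-r-1\geq 1$ under the hypothesis $r<p-1$. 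Hence $\phi_r(\Fil^p\Sigma_1\cdot M)=0$ in $M$, and as $\phi_r(\Fil^r M)$ generates $M$ by definition of the category, the elements $\phi_r(\alpha_i)$ already do so.

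For general $M\in\Mod^{r,\phi}_{/\Sigma_\infty}$, I would induct on the minimal number of extensions needed to build $M$ out of objects of $\Mod^{r,\phi}_{/\Sigma_1}$. Given a short exact sequence $0\to M'\to M\to M''\to 0$ in the category with both ends satisfying the lemma, I would choose families $\alpha''_1,\ldots,\alpha''_{d''}\in\Fil^r M''$ and $\alpha'_1,\ldots,\alpha'_{d'}\in\Fil^r M'$ as provided by induction, lift each $\alpha''_i$ to $\tilde\alpha_i\in\Fil^r M$ via the strict compatibility of filtrations in the exact sequence, and propose the combined family $\tilde\alpha_1,\ldots,\tilde\alpha_{d''},\alpha'_1,\ldots,\alpha'_{d'}$ as witnesses. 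For an arbitrary $x\in\Fil^r M$, its image in $\Fil^r M''$ lies in $\sum_i\Sigma\alpha''_i+\Fil^p\Sigma\cdot M''$; subtracting the corresponding combination of the $\tilde\alpha_i$ together with a lift of the $\Fil^p\Sigma\cdot M''$ part places the remainder in $\Fil^r M\cap M'=\Fil^r M'$, which is covered by the inductive hypothesis on $M'$. Generation of $M$ by the images $\phi_r(\tilde\alpha_i)$ and $\phi_r(\alpha'_j)$ follows similarly: their reductions generate $M''$ by induction, and inside the kernel they include $\phi_r(\alpha'_j)$, which generate $M'$.

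The delicate point here is the exactness convention in $\Mod^{r,\phi}_{/\Sigma_\infty}$, specifically the strictness identity $\Fil^r M\cap M'=\Fil^r M'$ and the surjectivity of $\Fil^r M\to\Fil^r M''$ on the filtered pieces; these should be built into the definition of admissible short exact sequences in the category, in parallel with the $S$-variant conventions of Breuil and Caruso. Once these are pinned down, the inductive step reduces to routine tracking of generators, and the base case is exactly the computation above.
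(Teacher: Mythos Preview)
Your proposal is correct and follows essentially the same approach as the paper: the base case via Lemma~\ref{adaptedp} (where the paper simply writes ``By induction and Lemma~\ref{adaptedp}'' without spelling out your verification that $\phi_r(\Fil^p\Sigma_1\cdot M)=0$), and the inductive step by lifting the $\alpha''_i$ along a short exact sequence and adjoining the $\alpha'_j$. Your discussion of strictness of the filtration in the exact sequence is a legitimate point the paper leaves implicit.
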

\begin{proof}
By induction and Lemma \ref{adaptedp}, we may assume that there exists an exact sequence of the category $\Mod^{r,\phi}_{/\Sigma_\infty}$
\[
0\to M'\to M\to M''\to 0
\]
such that the lemma holds for $M'$ and $M''$. Let $\alpha'_1,\ldots,\alpha'_{l'}$ ({\it resp.} $\alpha''_1,\ldots,\alpha''_{l''}$) be elements of $\Fil^r M'$ ({\it resp.}  $\Fil^r M''$) as in the lemma. Let $\alpha_l\in\Fil^r M$ be a lift of $\alpha''_l$. Then the elements $\alpha'_1,\ldots,\alpha'_{l'},\alpha_1,\ldots,\alpha_{l''}$ satisfy the condition in the lemma for $M$.
\end{proof}

\begin{cor}\label{C}
Let $M$ be an object of $\Mod^{r,\phi}_{/\Sigma_\infty}$ and $C\in M_d(\Sigma)$ be a matrix satisfying
\[
(\alpha_1,\ldots,\alpha_d)=(e_1,\ldots,e_d)C
\]
with the notation of the previous lemma. Let $A$ be an object of $'\Mod^{r,\phi}_{/\Sigma}$. Then a $\Sigma$-linear homomorphism $f:M\to A$ preserving $\Fil^r$ also
 commutes with $\phi_r$ if and only if
\[
\phi_r(f(e_1,\ldots,e_d)C)=(f(e_1),\ldots,f(e_d)).
\]
\end{cor}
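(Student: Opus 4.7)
The plan is to prove the nontrivial (sufficiency) direction by reducing the commutation of $f$ with $\phi_r$ on $\Fil^r M$ to the commutation of $f$ with the induced Frobenius $\phi=c^{-r}\phi_r\circ E(u)^r$ on all of $M$, and then to kill the resulting error by a contraction argument that uses $r<p-1$ in a crucial way. For necessity, if $f$ commutes with $\phi_r$ then $\phi_r(f(\alpha_i))=f(\phi_r(\alpha_i))=f(e_i)$, which gives the matrix identity directly.

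Assume $\phi_r(f(\alpha_i))=f(e_i)$ for every $i$. By Lemma \ref{adaptedlem}, any $x\in\Fil^r M$ can be written as $x=\sum_i s_i\alpha_i+Yn$ with $s_i\in\Sigma$ and $n\in M$. Using $\phi$-semilinearity of $\phi_r$ together with $\phi_r(Y)=p^{p-1-r}c^p$, a direct computation gives
\[
f(\phi_r(x))-\phi_r(f(x))=p^{p-1-r}c^p\,\delta(n),
\]
where $\delta:M\to A$ is defined by $\delta(m):=f(\phi(m))-\phi(f(m))$; in particular, $f\phi_r=\phi_r f$ on $\Fil^r M$ as soon as $\delta\equiv 0$. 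Since $\phi$ is $\phi$-semilinear on both $M$ and $A$ and $f$ is $\Sigma$-linear, the map $\delta$ is itself $\phi$-semilinear, so its vanishing reduces to $\delta(e_j)=0$ for each generator $e_j$ of $M$. To obtain a recursion on these, I expand $E(u)^r e_j=\sum_i s'_{ij}\alpha_i+Yn_j$ (again by Lemma \ref{adaptedlem}) and write $n_j=\sum_k t_{jk}e_k\in M$. Computing both $f(\phi(e_j))$ and $\phi(f(e_j))=c^{-r}\phi_r(f(E(u)^r e_j))$ through this decomposition, and using the hypothesis to cancel the $\alpha$-contributions, yields
\[
\delta(e_j)=p^{p-1-r}c^{p-r}\sum_k\phi(t_{jk})\,\delta(e_k).
\]

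Stacking over $j$ gives a matrix equation $D=BD$ with column vector $D=(\delta(e_j))_j\in A^d$ and matrix $B_{jk}=p^{p-1-r}c^{p-r}\phi(t_{jk})\in M_d(\Sigma)$. The hypothesis $r<p-1$ forces $p-1-r\geq 1$, so every entry of $B$ lies in $p\Sigma$. Since $\Sigma$ is $p$-adically complete, $I-B$ has determinant in $1+p\Sigma\subset\Sigma^\times$ and is therefore invertible in $M_d(\Sigma)$; hence $D=0$ and the sufficiency follows. The main obstacle is navigating the circular-looking definition $\phi=c^{-r}\phi_r\circ E(u)^r$ on $M$, where naive iteration to reduce $\delta$ to itself does not terminate; the key observation that saves the argument is that each step picks up an extra factor of $p^{p-1-r}$, and it is this genuine $p$-adic contraction, available precisely because $r<p-1$, that permits the inversion of $I-B$.
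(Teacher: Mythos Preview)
Your proof is correct and follows essentially the same contraction idea as the paper's: both arguments reduce the commutation $f\phi_r=\phi_r f$ on $\Fil^p\Sigma\cdot M$ to a self-referential identity that picks up a factor of $p$ (coming from $\phi_r(\Fil^p\Sigma)\subseteq p\Sigma$, or your more explicit $\phi_r(Y)=p^{p-1-r}c^p$). The packaging differs slightly: the paper runs a downward induction on $p^{l}\Fil^p\Sigma\cdot M$, using that $M$ is $p$-power torsion to start the induction; you instead assemble the errors into a matrix equation $(I-B)D=0$ and invert $I-B$ over $\Sigma$, using that $p$ lies in the Jacobson radical of $\Sigma$. Your version has the mild advantage that it does not invoke the torsion hypothesis on $M$ directly, while the paper's induction avoids introducing the auxiliary map $\delta$; but the underlying mechanism is the same. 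One nitpick: what you actually need is $1+p\Sigma\subseteq\Sigma^\times$, which holds because $\Sigma$ is a local ring with $p$ in its maximal ideal; invoking ``$p$-adic completeness'' of $\Sigma$ is not quite the right justification.
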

\begin{proof}
Suppose that the latter condition holds. Then we have $\phi_r(f(\alpha_i))=f(\phi_r(\alpha_i))$ for any $i$. 
We only have to check the equality $\phi_r\circ f=f\circ \phi_r$ on $\Fil^p\Sigma\cdot M$. Suppose that this equality holds on the submodule $p^{l+1}\Fil^p\Sigma\cdot M$. 
For $m\in M$, we can take $m'\in\Fil^p\Sigma\cdot M$ such that $E(u)^rm=\sum_i s_i\alpha_i+m'$. Let $s$ be in $\Fil^p\Sigma$. Then we have
\[
f(\phi_r(p^lsm))=p^l\phi_r(s)c^{-r}\sum_i\phi(s_i)f(\phi_r(\alpha_i))+p^l\phi_r(s)c^{-r}f(\phi_r(m')).
\]
Since $\phi_r(\Fil^p\Sigma)\subseteq p\Sigma$, this equals to $\phi_r(f(p^lsm))$ by assumption. Thus the lemma follows by induction.
\end{proof}

\begin{cor}\label{quotlem}
Let $M$ and $A$ be as above and $J\subseteq \Fil^rA$ be a $\Sigma$-submodule of $A$ such that
 $\phi_r(J)\subseteq J$. We can consider the $\Sigma$-module $A/J$
 naturally as an
 object of $'\Mod^{r,\phi}_{/\Sigma}$. Suppose that for any $x\in J$,
 there exists $t\in\mathbb{Z}_{\geq 0}$ such that $\phi_{r}^{t}(x)=0$. Then the natural homomorphism of abelian groups
\[
\Hom_{\Sigma,\Fil^r,\phi_r}(M,A)\to\Hom_{\Sigma,\Fil^r,\phi_r}(M,A/J)
\]
is an isomorphism.
\end{cor}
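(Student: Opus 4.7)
Using Lemma~\ref{adaptedlem} fix $\alpha_i\in\Fil^r M$, generators $e_i=\phi_r(\alpha_i)$ of $M$, and a matrix $C=(c_{ji})\in M_d(\Sigma)$ with $\alpha_i\equiv\sum_j c_{ji}e_j\pmod{\Fil^p\Sigma\cdot M}$. The essential observation is that $\phi_r\colon\Fil^r A\to A$ is $\phi$-semilinear over $\Sigma$; consequently each layer $N_t:=\ker(\phi_r^t\colon J\to A)$ is a $\Sigma$-submodule of $J$, and the hypothesis asserts $J=\bigcup_{t\ge 0}N_t$.

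For injectivity, let $f$ lie in the kernel and set $v_i:=f(e_i)\in J$; the $\Sigma$-linearity of $f$ together with its commutation with $\phi_r$ forces
\[
v_i=\phi_r\!\Big(\sum_j c_{ji}v_j\Big).
\]
Let $T$ be the smallest integer such that $v_i\in N_T$ for every $i$. If $T\ge 1$, then $\sum_j c_{ji}v_j\in N_T$ because $N_T$ is a $\Sigma$-submodule, and applying $\phi_r^{T-1}$ to the displayed identity yields $\phi_r^{T-1}(v_i)=\phi_r^T(\sum_j c_{ji}v_j)=0$, i.e.\ $v_i\in N_{T-1}$, contradicting the minimality of $T$. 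Thus $T=0$, all $v_i$ vanish, and $f=0$.

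For surjectivity, given $\bar f\colon M\to A/J$, I reduce by d\'evissage and the five-lemma to the case $M\in\Mod^{r,\phi}_{/\Sigma_1}$. There $M$ is free over $\Sigma_1$ on an adapted basis supplied by Lemma~\ref{adaptedp}, and the surjection $\Fil^r A\twoheadrightarrow\Fil^r A/J$ on the $\alpha$-data produces an initial $\Sigma$-linear, $\Fil^r$-preserving lift $f_0\colon M\to A$ of $\bar f$. One then corrects iteratively by setting $f_{n+1}(e_i):=\phi_r(f_n(\alpha_i))=\phi_r(\sum_j c_{ji}f_n(e_j))$ and extending $\Sigma$-linearly using freeness; the corrections $\delta_n(e_i):=f_{n+1}(e_i)-f_n(e_i)\in J$ satisfy $\delta_{n+1}(e_i)=\phi_r(\sum_j c_{ji}\delta_n(e_j))$, so by the same mechanism as in the injectivity step they drop one $\phi_r$-nilpotence level per iteration and eventually vanish. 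The stabilized map $f$ satisfies $\phi_r(f(\alpha_i))=f(e_i)$, hence commutes with $\phi_r$ by Corollary~\ref{C} and lifts $\bar f$ by construction. The delicate point, and main obstacle, is the construction of the initial lift $f_0$ in the base case of the d\'evissage: one must compatibly lift $\bar f(\alpha_i)\in\Fil^r A/J$ into $\Fil^r A$ while simultaneously respecting the $\Sigma_1$-structure and the $p$-torsion of $M$. Once $f_0$ is in place, the iterative convergence and its categorical conclusion are automatic consequences of the local $\phi_r$-nilpotence hypothesis.
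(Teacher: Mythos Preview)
Your injectivity argument is correct and is in fact the same mechanism the paper uses (it reappears there as the step showing $p(\hat e_i+x_i)=0$).

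For surjectivity there are two genuine gaps. First, the five-lemma does not justify the d\'evissage: the rows $\Hom(M'',-)\to\Hom(M,-)\to\Hom(M',-)$ are only left-exact, so isomorphisms at $M'$ and $M''$ give injectivity at $M$ but not surjectivity. Second, you correctly flag the $p$-torsion compatibility of $f_0$ as the obstacle, but you do not resolve it; there is no reason a priori why lifts of $\bar f(e_i)$ can be chosen in $A[p]$, so the ``initial $\Sigma$-linear lift $f_0$'' need not exist.

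The paper dissolves both issues simultaneously by recasting surjectivity as the vanishing of $\Ext^1_{'\Mod^{r,\phi}_{/\Sigma}}(M,J)$: given an extension $0\to J\to\mathcal E\to M\to 0$ with $M\in\Mod^{r,\phi}_{/\Sigma_1}$, take \emph{arbitrary} set-theoretic lifts $\hat e_i\in\mathcal E$ of a basis of $M$ (no $p$-torsion constraint needed), run your iterative correction inside $\mathcal E$ to reach the fixed point $\hat e_i+x_i$, and only \emph{afterwards} observe that $p(\hat e_i+x_i)\in J$ satisfies the same fixed-point equation as in your injectivity step, hence vanishes. This produces a section, so $\Ext^1(M,J)=0$ for $p$-torsion $M$; the d\'evissage to general $M$ is then the standard long-exact-sequence argument for $\Ext^1$, not the five-lemma. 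In short, your iterative scheme is exactly right --- you just need to defer the $p$-torsion check to the end (where it follows from your own injectivity argument) and phrase the d\'evissage through $\Ext^1$.
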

\begin{proof}
The proof is similar to \cite[Subsection 2.2]{Ab}. We consider the $\Sigma$-submodule $J$ as an object of the category $'\Mod^{r,\phi}_{/\Sigma}$ by putting $\Fil^rJ=J$. By devissage, it is enough to show that, for any $M\in\Mod^{r,\phi}_{/\Sigma_1}$, we have $\Ext_{'\Mod^{r,\phi}_{/\Sigma}}(M,J)=0$ and the map in the corollary is an isomorphism. For the first assertion, let 
\[
\xymatrix{
0 \ar[r] & J \ar[r] & \mathcal{E} \ar[r] & M \ar[r] & 0
}
\]
be an extension in the category $'\Mod^{r,\phi}_{/\Sigma}$. Let $e_i$, $\alpha_i$ and $C$ be as in Corollary \ref{C} such that $e_1,\ldots,e_d$ form a basis of $M$. Let $\hat{e}_i\in\mathcal{E}$ be a lift of $e_i\in M$. Then we have $(\hat{e}_1,\ldots,\hat{e}_d)C\in(\Fil^r\mathcal{E})^{\oplus d}$ and 
\[
\phi_r((\hat{e}_1,\ldots,\hat{e}_d)C)=(\hat{e}_1+\delta_1,\ldots,\hat{e}_d+\delta_d)
\]
for some $\delta_1,\ldots,\delta_d\in J$. On the other hand, there exists a unique $d$-tuple $(x_1,\ldots,x_d)\in J^{\oplus d}$ satisfying the equation
\[
\phi_r((\hat{e}_1+x_1,\ldots,\hat{e}_d+x_d)C)=(\hat{e}_1+x_1,\ldots,\hat{e}_d+x_d).
\]
Indeed, the $d$-tuple
\[
\sum_{i=0}^{t}(\phi_{r}^{i}(\delta_1),\ldots,\phi_{r}^{i}(\delta_d))\phi(C)\cdots\phi^{i-1}(C)\phi^i(C)
\]
is stable for sufficiently large $t$ by assumption and this limit gives a unique
 solution of the equation. Then we have
\[
(p(\hat{e}_1+x_1),\ldots,p(\hat{e}_d+x_d))=\phi_r(p(\hat{e}_1+x_1),\ldots,p(\hat{e}_d+x_d))\phi(C).
\]
Since the $d$-tuple on the left-hand side is contained in $J^{\oplus d}$, we see that this $d$-tuple is zero and $e_i\mapsto \hat{e}_i+x_i$ defines a section $M\to\mathcal{E}$. We can prove the second assertion similarly.
\end{proof}

Next we show that the two categories $\Mod^{r,\phi}_{/\Sigma_\infty}$ and $\Mod^{r,\phi}_{/S_\infty}$ are in fact equivalent. For $M\in \Mod^{r,\phi}_{/\Sigma_\infty}$, we associate to it an $S$-module $\cM$ by setting $\cM=S\otimes_\Sigma M$.
We also define its $S$-submodule $\Fil^r\cM$ by
\[
\Fil^r\cM=\Ker(\cM=S\otimes_\Sigma M\to S/\Fil^rS\otimes_\Sigma M/\Fil^rM\simeq M/\Fil^rM),
\]
where the last isomorphism is induced by the natural isomorphisms of $W$-algebras
\[
W[u]/(E(u)^r)\to \Sigma/\Fil^r\Sigma\to S/\Fil^rS.
\]
These associations induce two functors from $\Mod^{r,\phi}_{/\Sigma_\infty}$ to the category of $S$-modules, $M\mapsto \cM$ and $M\mapsto \Fil^r\cM$.
Since the rings $S$ and $W[u]/(E(u)^r)$ are $p$-torsion free, we have $\Tor^{\Sigma}_1(\Sigma_1,S)=\Tor^{\Sigma}_1(\Sigma_1,\Sigma/\Fil^r\Sigma)=0$ and thus $\Tor^{\Sigma}_1(M,S)=\Tor^{\Sigma}_1(M,\Sigma/\Fil^r\Sigma)=0$ for any $M\in \Mod^{r,\phi}_{/\Sigma_\infty}$. Hence we see that these two functors are exact.

We define $\phi_r:\Fil^r\cM \to \cM$ as follows. Note that $\Fil^rS\otimes_\Sigma M\subseteq \cM$ and $\Fil^r\cM$ is equal to $\Fil^rS\otimes_\Sigma M+\Img(S\otimes_\Sigma\Fil^rM\to \cM)$. Set $\phi'_r:\Fil^rS\otimes_\Sigma M\to \cM$ to be $\phi'_r=\phi_r\otimes\phi$.

\begin{lem}\label{phirwelldef}
The map $\phi\otimes \phi_r:S\otimes_\Sigma\Fil^rM\to \cM$ induces a $\phi$-semilinear map $\phi''_r:\Img(S\otimes_\Sigma\Fil^rM\to \cM)\to \cM$.
\end{lem}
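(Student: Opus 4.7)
The lemma amounts to showing that the $\phi$-semilinear map $\phi\otimes\phi_r$ vanishes on $\Ker(1\otimes\iota)$, where $\iota:\Fil^rM\hookrightarrow M$; the $\phi$-semilinearity of the induced $\phi''_r$ is then inherited from that of $\phi\otimes\phi_r$. My plan is to produce an explicit $\Sigma$-presentation of $\Fil^rM$ via Lemma~\ref{adaptedlem} and reduce everything to the statement that $\phi_r$ annihilates a relation in $\Fil^rM$.

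Using Lemma~\ref{adaptedlem}, choose $\alpha_1,\ldots,\alpha_d\in\Fil^rM$ with $e_j:=\phi_r(\alpha_j)$ generating $M$ and $\Fil^rM=\sum_j\Sigma\alpha_j+Y\cdot M$. Since the $e_j$ generate $M$, the elements $\alpha_1,\ldots,\alpha_d,Ye_1,\ldots,Ye_d$ generate $\Fil^rM$ as a $\Sigma$-module, yielding a surjection $\pi:\Sigma^{2d}\twoheadrightarrow\Fil^rM$; set $\pi_M:=\iota\circ\pi$. The vanishing $\Tor_1^\Sigma(S,M)=0$ noted just before the lemma gives $\Ker(1\otimes\pi_M)=S\otimes_\Sigma\Ker\pi_M$. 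Since $1\otimes\pi$ is surjective, the desired vanishing of $\phi\otimes\phi_r$ on $\Ker(1\otimes\iota)$ is equivalent to the vanishing of the $\phi$-semilinear composite $\Psi:=(\phi\otimes\phi_r)\circ(1\otimes\pi):S^{2d}\to\cM$ on $\Ker(1\otimes\pi_M)$, and by $\phi$-semilinearity over $S$ it suffices to check that $\Psi$ vanishes on elements $\tilde\omega_0\in\Ker\pi_M\subseteq\Sigma^{2d}$.

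For such $\tilde\omega_0=(\sigma_1,\ldots,\sigma_d,\tau_1,\ldots,\tau_d)$, the relation $\sum_j\sigma_j\alpha_j+\sum_l\tau_lYe_l=\pi_M(\tilde\omega_0)=0$ holds in $M$, and hence in $\Fil^rM$ by injectivity of $\iota$. Moving $\Sigma$-scalars across the tensor yields $\Psi(\tilde\omega_0)=1\otimes\phi_r(\pi(\tilde\omega_0))=1\otimes\phi_r(0)=0$ in $\cM$; concretely, expanding via the $\phi$-semilinearity of $\phi_r$ and the identity $\phi_r(Ym)=\phi_r(Y)\phi(m)=p^{p-1-r}c^p\phi(m)$ gives $\Psi(\tilde\omega_0)=1\otimes\bigl(\sum_j\phi(\sigma_j)e_j+p^{p-1-r}c^p\sum_l\phi(\tau_l)\phi(e_l)\bigr)$, which equals $1\otimes\phi_r$ applied to the above zero relation. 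The one substantive step is the Tor identification $\Ker(1\otimes\pi_M)=S\otimes_\Sigma\Ker\pi_M$; once that is in place, the rest of the verification is purely formal.
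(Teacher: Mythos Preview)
Your argument has a genuine gap at exactly the step you flag as ``the one substantive step.'' The vanishing $\Tor_1^\Sigma(S,M)=0$ does \emph{not} yield $\Ker(1\otimes\pi_M)=S\otimes_\Sigma\Ker\pi_M$, because $\pi_M=\iota\circ\pi$ is not surjective onto $M$: its image is $\Fil^rM$. Splitting into the short exact sequences $0\to\Ker\pi_M\to\Sigma^{2d}\to\Fil^rM\to 0$ and $0\to\Fil^rM\to M\to M/\Fil^rM\to 0$ and chasing the long exact sequences, one finds
\[
0\to \Img\bigl(S\otimes_\Sigma\Ker\pi_M\bigr)\to \Ker(1\otimes\pi_M)\to \Tor_1^\Sigma(S,M/\Fil^rM)\to 0,
\]
so your identification is equivalent to $\Tor_1^\Sigma(S,M/\Fil^rM)=0$, which is not among the Tor vanishings established before the lemma. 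Worse, since $\iota$ is injective we have $\Ker\pi_M=\Ker\pi$, hence $(1\otimes\pi)\bigl(S\otimes_\Sigma\Ker\pi_M\bigr)=0$; thus your identification would force $\Ker(1\otimes\iota)=(1\otimes\pi)\bigl(\Ker(1\otimes\pi_M)\bigr)=0$, i.e.\ it is equivalent to the map $S\otimes_\Sigma\Fil^rM\to\cM$ being injective. That is strictly stronger than the lemma and is not what $\Tor_1^\Sigma(S,M)=0$ gives you.

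The paper sidesteps this by a direct element-level computation that does not require injectivity of $1\otimes\iota$. Given $z=\sum_i s_i\otimes m_i$ with $\bar z=0$, it decomposes each $s_i=s'_i+s''_i$ with $s'_i\in\Sigma$ and $s''_i\in\Fil^pS$ (using $S=\Sigma+\Fil^pS$), rewrites $\sum_i s'_im_i=\sum_j s^{(j)}m^{(j)}$ with $s^{(j)}\in\Fil^r\Sigma$ via the isomorphism $\cM/\Fil^rS\cdot\cM\simeq M/\Fil^r\Sigma\cdot M$, and then observes that $(\phi\otimes\phi_r)(z)$ coincides with $\phi'_r=\phi_r\otimes\phi$ applied to $\sum_j s^{(j)}\otimes m^{(j)}+\sum_i s''_i\otimes m_i\in\Fil^rS\otimes_\Sigma M$, using $\phi=p^r\phi_r$ on $\Fil^rS\supseteq\Fil^pS$. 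Since that last element is $\bar z=0$ in $\cM$, one gets $(\phi\otimes\phi_r)(z)=0$. If you want to salvage your approach, you would need an independent proof that $\Tor_1^\Sigma(S,M/\Fil^rM)=0$; otherwise, the paper's decomposition $S=\Sigma+\Fil^pS$ is the missing idea.
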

\begin{proof}
Let $z=\sum_i s_i\otimes m_i$ be in $S\otimes_\Sigma\Fil^rM$ with $s_i\in S$ and $m_i\in \Fil^r M$. Let $\bar{z}$ be its image in $\cM$ and suppose that $\bar{z}=0$. Write $s_i=s'_i+s''_i$ with $s'_i\in \Sigma$ and $s''_i\in \Fil^pS$. Since we have an isomorphism $\cM/\Fil^rS\cdot\cM\simeq M/\Fil^r\Sigma\cdot M$, we can find elements $s^{(j)}\in \Fil^r \Sigma$ and $m^{(j)}\in M$ such that the equality $\sum_is'_im_i=\sum_j s^{(j)}m^{(j)}$ holds in $M$. Then we have
\[
0=\bar{z}=\sum_i 1\otimes s'_im_i+\sum_i s''_i\otimes m_i=\sum_j s^{(j)}\otimes m^{(j)}+\sum_i s''_i\otimes m_i
\]
in $\cM$. On the other hand, the element $(\phi\otimes\phi_r)(z)\in\cM$ is equal to
\[
\sum_j1\otimes\phi_r(s^{(j)}m^{(j)})+\sum_i\phi(s''_i)\otimes\phi_r(m_i).
\]
Since $\phi=p^r\phi_r$, this equals $\phi'_r(\sum_js^{(j)}\otimes m^{(j)}+\sum_i s''_i\otimes m_i)=0$.
\end{proof}

\begin{lem}\label{phirpatch}
The maps $\phi'_r$ and $\phi''_r$ patch together and define a $\phi$-semilinear map $\phi_r:\Fil^r\cM\to \cM$.
\end{lem}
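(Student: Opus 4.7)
The approach mirrors the proof of Lemma \ref{phirwelldef}, but now with two contributions to glue. It suffices to show: if $z_1=\sum_i a_i\otimes m_i\in\Fil^r S\otimes_\Sigma M$ and $z_2=\sum_j s_j\otimes n_j\in S\otimes_\Sigma\Fil^r M$ have the same image in $\cM$, then $\phi'_r(z_1)=(\phi\otimes\phi_r)(z_2)$ in $\cM$. This guarantees that $\phi'_r$ and (the well-defined) $\phi''_r$ agree on the overlap $(\Fil^r S\otimes_\Sigma M)\cap\Img(\beta)$ inside $\cM$, so that they patch to a single $\phi$-semilinear map $\phi_r\colon\Fil^r\cM\to\cM$; the $\phi$-semilinearity follows formally from that of $\phi'_r$ and $\phi\otimes\phi_r$.

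To establish the claim, I would first use the decomposition $\Fil^r S=\Fil^r\Sigma+\Fil^p S$ (valid for $r\le p-1$, coming from the PD envelope structure) to split $a_i=a'_i+a''_i$ and $s_j=s'_j+s''_j$, with primes in $\Sigma$ and double primes in $\Fil^p S$. The primed elements satisfy $a'_i m_i, s'_j n_j\in\Fil^r M$, so reducing the equation modulo $\Fil^r S\cdot\cM$ and using the identification $\cM/\Fil^r S\cdot\cM\cong M/\Fil^r\Sigma\cdot M$ (via the isomorphism $\Sigma/\Fil^r\Sigma\to S/\Fil^r S$), the double-primed contributions vanish and we deduce that $\mu:=\sum_i a'_i m_i-\sum_j s'_j n_j$ lies in $\Fil^r\Sigma\cdot M$. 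Writing $\mu=\sum_k s^{(k)}m^{(k)}$ with $s^{(k)}\in\Fil^r\Sigma$ and $m^{(k)}\in M$, the equation $\alpha(z_1)-\beta(z_2)=0$ in $\cM$ becomes
\[
\sum_k s^{(k)}\otimes m^{(k)}+\sum_i a''_i\otimes m_i-\sum_j s''_j\otimes n_j=0.
\]
Each term lies in $\Fil^r S\otimes_\Sigma M$, and that module injects into $\cM$ by the Tor-vanishing $\Tor_1^{\Sigma}(\Sigma/\Fil^r\Sigma,M)=0$ recorded earlier. Hence the identity already holds in $\Fil^r S\otimes_\Sigma M$ itself.

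Applying $\phi'_r$ to the boxed identity gives a relation in $\cM$, and it remains to match it with the expansions of $\phi'_r(z_1)$ and $(\phi\otimes\phi_r)(z_2)$ via the two splittings. For the primed pieces, the axiom $\phi_r(a'_i m_i)=\phi_r(a'_i)\phi(m_i)$ and the $\phi$-semilinearity $\phi_r(s'_j n_j)=\phi(s'_j)\phi_r(n_j)$ (for $s'_j\in\Sigma$, $n_j\in\Fil^r M$), combined with the fact that $\phi_r(\Fil^r\Sigma)\subseteq\Sigma$, allow me to rewrite both the primed part of $\phi'_r(z_1)$ and that of $(\phi\otimes\phi_r)(z_2)$ as the common element $1\otimes\phi_r(\mu)=\sum_k\phi_r(s^{(k)})\otimes\phi(m^{(k)})$. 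For the double-primed pieces, I use the compatibilities $\phi|_{\Fil^r S}=p^r\phi_r$ and $\phi|_{\Fil^r M}=p^r\phi_r$ (the latter forced by combining the axiom $\phi_r(E(u)^r m)=\phi_r(E(u)^r)\phi(m)=c^r\phi(m)$ with $\phi_r(E(u)^r m)=\phi(E(u)^r)\phi_r(m)=p^r c^r\phi_r(m)$ for $m\in\Fil^r M$) to swap $\phi$ and $\phi_r$ across the tensor. Substituting the $\phi'_r$-image of the boxed identity then matches the double-primed pieces of the two sides. The main obstacle is the careful bookkeeping of the $\Sigma$-versus-$\Fil^p S$ splittings and the correct swaps between $\phi$ and $\phi_r$ on each piece; once organized, the computation yields $\phi'_r(z_1)=(\phi\otimes\phi_r)(z_2)$, completing the patching.
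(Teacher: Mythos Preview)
Your argument is correct and follows essentially the same strategy as the paper: split via $S=\Sigma+\Fil^pS$ (and $\Fil^rS=\Fil^r\Sigma+\Fil^pS$), reduce modulo $\Fil^rS\cdot\cM\cong\Fil^r\Sigma\cdot M$, and use the injectivity of $\Fil^rS\otimes_\Sigma M\to\cM$ (from $\Tor_1^\Sigma(M,S/\Fil^rS)=\Tor_1^\Sigma(M,\Sigma/\Fil^r\Sigma)=0$) to obtain an identity in $\Fil^rS\otimes_\Sigma M$ to which $\phi'_r$ can be applied.

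The only difference is organizational: the paper first observes directly that $\phi'_r$ and $\phi''_r$ agree on $\Img(\Fil^rS\otimes_\Sigma\Fil^rM\to\cM)$, which lets it reduce immediately to the case $z_2=1\otimes m$ with $m\in\Fil^rM$ (since any $s\otimes n$ with $s\in S$, $n\in\Fil^rM$ decomposes as $1\otimes s'n$ plus a term in $\Fil^pS\otimes_\Sigma\Fil^rM$). After this reduction, the paper's computation is very short: one shows $m\in\Fil^r\Sigma\cdot M$ and concludes. Your treatment keeps $z_1$ and $z_2$ fully general and instead recovers the agreement on $\Fil^rS\otimes_\Sigma\Fil^rM$ via the identity $\phi|_{\Fil^rM}=p^r\phi_r$ (your ``swap''), which is a slightly longer but equally valid route.
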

\begin{proof}
Since $\phi'_r$ and $\phi''_r$ coincide on $\Img(\Fil^rS\otimes_\Sigma\Fil^rM\to\cM)$, it is enough to show that $1\otimes\phi_r(m)=\phi'_r(\sum_is_i\otimes m_i)$ for any $m\in\Fil^rM$, $s_i\in \Fil^rS$ and $m_i\in M$ satisfying $1\otimes m=\sum_i s_i\otimes m_i$ in $\cM$. As in the proof of Lemma \ref{phirwelldef}, the element $m$ can be written as $m=\sum_js^{(j)}m^{(j)}$ for some $s^{(j)}\in\Fil^r\Sigma$ and $m^{(j)}\in M$. By assumption, we have $\sum_is_i\otimes m_i=\sum_js^{(j)}\otimes m^{(j)}$ in $\Fil^rS\otimes_\Sigma M$. Hence the lemma follows.
\end{proof}

Then we see that this construction defines a functor $\cM_{\Sigma_\infty}:\Mod^{r,\phi}_{/\Sigma_\infty}\to\Mod^{r,\phi}_{/S_\infty}$.

\begin{lem}
The functor $\cM_{\Sigma_\infty}$ induces an equivalence of categories $\Mod^{r,\phi}_{/\Sigma_1}\to\Mod^{r,\phi}_{/S_1}$.
\end{lem}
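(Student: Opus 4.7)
My strategy is to exploit the two ``mod $\Fil^p$'' equivalences already in hand: Breuil's $T_0:\Mod^{r,\phi}_{/S_1}\to \Mod^{r,\phi}_{/\Stil}$ cited before Lemma \ref{ptoreq}, and the analogous $T_{0,\Sigma}:\Mod^{r,\phi}_{/\Sigma_1}\to\Mod^{r,\phi}_{/\Stil}$ of Lemma \ref{ptoreq}. If I construct a natural isomorphism $T_0\circ(\cM_{\Sigma_\infty}|_{\Mod^{r,\phi}_{/\Sigma_1}})\cong T_{0,\Sigma}$ of functors with values in $\Mod^{r,\phi}_{/\Stil}$, then the standard $2$-out-of-$3$ principle for equivalences forces $\cM_{\Sigma_\infty}|_{\Mod^{r,\phi}_{/\Sigma_1}}$ itself to be an equivalence.

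The first step is to verify that the restriction of $\cM_{\Sigma_\infty}$ indeed has image in $\Mod^{r,\phi}_{/S_1}$. For $M\in\Mod^{r,\phi}_{/\Sigma_1}$, the underlying module is $\cM=S\otimes_\Sigma M=S_1\otimes_{\Sigma_1}M$, which is free of finite rank over $S_1$ because $M$ is free of finite rank over $\Sigma_1$. Using the formulas from Lemmas \ref{phirwelldef} and \ref{phirpatch}, namely $\phi_r(1\otimes m)=1\otimes\phi_r(m)$ for $m\in\Fil^rM$, a system of $\Sigma_1$-generators of $M$ coming from $\phi_r(\Fil^rM)$ lifts to a system of $S_1$-generators of $\cM$ coming from $\phi_r(\Fil^r\cM)$.

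For the natural isomorphism, the two quotient maps $S\twoheadrightarrow S_1/\Fil^pS_1$ and $\Sigma\twoheadrightarrow \Sigma_1/\Fil^p\Sigma_1$ are both canonically identified with the surjection onto $\Stil=k[u]/(u^{ep})$. Hence there is a canonical isomorphism of $\Stil$-modules
\[
T_0(\cM_{\Sigma_\infty}(M))=(S_1\otimes_{\Sigma_1}M)/\Fil^pS_1\cdot(S_1\otimes_{\Sigma_1}M)\simeq\Stil\otimes_{\Sigma_1}M\simeq M/\Fil^p\Sigma_1\cdot M=T_{0,\Sigma}(M),
\]
natural in $M$. To promote this to an isomorphism in $\Mod^{r,\phi}_{/\Stil}$, I would invoke Lemma \ref{adaptedp} to pick an adapted basis $\{e_1,\ldots,e_d\}$ of $M$ with $\Fil^rM=\sum_i\Sigma_1u^{r_i}e_i+\Fil^p\Sigma_1\cdot M$ and $0\leq r_i\leq er$. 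Using the description $\Fil^r\cM=\Fil^rS\cdot\cM+\Img(S\otimes_\Sigma\Fil^rM\to\cM)$ together with $\Fil^r\Stil=u^{er}\Stil\subseteq\sum_i\Stil u^{r_i}\bar e_i$, the image of $\Fil^r\cM$ in $T_0(\cM)$ equals $\sum_i\Stil u^{r_i}\bar e_i=\Fil^rT_{0,\Sigma}(M)$, and the compatibility of $\phi_r$ then follows from $\phi_r(1\otimes\alpha_i)=1\otimes\phi_r(\alpha_i)$ combined with $\phi_r(\Fil^pS)\subseteq p^{p-r}S\equiv 0\pmod p$.

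The main obstacle is precisely this filtration check: because $\Fil^r\cM$ receives contributions both from $\Fil^rS\cdot\cM$ and from the image of $S\otimes_\Sigma\Fil^rM$, one must show that modulo $\Fil^pS\cdot\cM$ these collapse to the single expression $\sum_i\Stil u^{r_i}\bar e_i$. The adapted basis of Lemma \ref{adaptedp} reduces this to a direct computation in $\Stil$; once that is in place, the rest of the argument is formal.
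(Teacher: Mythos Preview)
Your approach is correct and essentially identical to the paper's: the paper draws the same triangle with $T_0$ and $T_{0,\Sigma}$, asserts commutativity ``from the definition,'' and invokes the two equivalences (Lemma \ref{ptoreq} and the analogous result for $T_0$) to conclude. You have simply unpacked the commutativity check that the paper leaves implicit.
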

\begin{proof}
Consider the diagram of functors
\[
\xymatrix{
\Mod^{r,\phi}_{/\Sigma_1}\ar[r]^{\cM_{\Sigma_\infty}}\ar[dr]_{T_{0,\Sigma}} & \Mod^{r,\phi}_{/S_1}\ar[d]^{T_0} \\
 & \Mod^{r,\phi}_{/\Stil}.
}
\]
From the definition, we see that this diagram is commutative. By Lemma \ref{ptoreq}, the downward arrows are equivalences of categories. Thus the lemma follows.
\end{proof}

Then a devissage argument as in \cite[Proposition 1.1.11]{Ki_BT} shows the following corollary.

\begin{cor}\label{fullfaith}
The functor $\cM_{\Sigma_\infty}:\Mod^{r,\phi}_{/\Sigma_\infty}\to\Mod^{r,\phi}_{/S_\infty}$ is fully faithful.
\end{cor}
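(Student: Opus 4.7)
The categories $\Mod^{r,\phi}_{/\Sigma_\infty}$ and $\Mod^{r,\phi}_{/S_\infty}$ are, by construction, the smallest full subcategories containing their $p$-torsion parts $\Mod^{r,\phi}_{/\Sigma_1}$ and $\Mod^{r,\phi}_{/S_1}$ respectively and closed under extensions. The preceding lemma already identifies $\cM_{\Sigma_\infty}$ as an equivalence between these $p$-torsion subcategories, and the construction of $\cM_{\Sigma_\infty}(M)=S\otimes_\Sigma M$ together with the description of $\Fil^r\cM$ and the patching definition of $\phi_r$ in Lemmas \ref{phirwelldef} and \ref{phirpatch} exhibits $\cM_{\Sigma_\infty}$ as an exact functor (recall that $\Tor_{1}^{\Sigma}(M,S)=0$ and $\Tor_{1}^{\Sigma}(M,\Sigma/\Fil^r\Sigma)=0$ were noted in the course of the construction). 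The plan is to run a devissage on the extension filtrations that produce objects of $\Mod^{r,\phi}_{/\Sigma_\infty}$ in order to bootstrap the $p$-torsion equivalence to full faithfulness on the whole category.

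Concretely, every $M\in\Mod^{r,\phi}_{/\Sigma_\infty}$ admits a finite filtration by subobjects whose successive quotients lie in $\Mod^{r,\phi}_{/\Sigma_1}$, and the same holds for $M'$. I would perform a double induction on the lengths of such filtrations. At the inductive step, fix a short exact sequence
\[
0\to M_1\to M\to M_2\to 0
\]
in $\Mod^{r,\phi}_{/\Sigma_\infty}$ with $M_1\in\Mod^{r,\phi}_{/\Sigma_1}$; by the exactness of $\cM_{\Sigma_\infty}$ it remains exact on the $S$-side. Applying $\Hom(-,M')$ and its $S$-analogue yields a commutative diagram of left-exact sequences
\[
\xymatrix@C=1.5em{
0 \ar[r] & \Hom(M_2,M') \ar[r]\ar[d] & \Hom(M,M') \ar[r]\ar[d] & \Hom(M_1,M') \ar[d] \\
0 \ar[r] & \Hom(\cM_{\Sigma_\infty}(M_2),\cM_{\Sigma_\infty}(M')) \ar[r] & \Hom(\cM_{\Sigma_\infty}(M),\cM_{\Sigma_\infty}(M')) \ar[r] & \Hom(\cM_{\Sigma_\infty}(M_1),\cM_{\Sigma_\infty}(M'))
}
\]
whose outer vertical arrows are bijective by the induction hypothesis, so the five-lemma (in its injectivity form) yields injectivity in the middle. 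For surjectivity I would run the symmetric induction on a filtration of $M'$: given $g\colon\cM_{\Sigma_\infty}(M)\to\cM_{\Sigma_\infty}(M')$, lift its projection to $\cM_{\Sigma_\infty}(M'/M'_1)$ by induction, and then lift the residual obstruction, which lands in $\Hom(\cM_{\Sigma_\infty}(M),\cM_{\Sigma_\infty}(M'_1))$, using the induction hypothesis on $M'_1$.

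The main obstacle I expect is not the homological bookkeeping but the verification that the lifts produced at each step genuinely respect $\Fil^r$ and commute with $\phi_r$, i.e.\ are morphisms in $\Mod^{r,\phi}_{/\Sigma_\infty}$ rather than merely $\Sigma$-linear maps of underlying modules. Here Lemmas \ref{phirwelldef} and \ref{phirpatch} are essential, because they express the $\phi_r$ on $\cM_{\Sigma_\infty}(M)$ entirely in terms of the $\phi_r$ on $M$ (via the patching of $\phi'_r$ and $\phi''_r$), so that compatibility with filtration and Frobenius on the $S$-side forces the lifted map on the $\Sigma$-side to be a morphism in $\Mod^{r,\phi}_{/\Sigma_\infty}$. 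With this compatibility in hand, the devissage goes through exactly as in Kisin's \cite[Proposition 1.1.11]{Ki_BT}.
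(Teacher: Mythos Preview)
Your proposal is correct and matches the paper's approach exactly: the paper's proof consists of the single sentence ``a devissage argument as in \cite[Proposition 1.1.11]{Ki_BT} shows the following corollary,'' relying on the preceding lemma (the equivalence on $p$-torsion objects) together with exactness of $\cM_{\Sigma_\infty}$. Your write-up simply unpacks that devissage in more detail than the paper does.
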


To show the essential surjectivity of the functor $\cM_{\Sigma_\infty}$, we define another functor $M_{\SG_\infty}:\Mod^{r,\phi}_{/\SG_\infty}\to \Mod^{r,\phi}_{/\Sigma_\infty}$ which is defined in a similar way to the functor $\cM_{\SG_\infty}:\Mod^{r,\phi}_{/\SG_\infty}\to \Mod^{r,\phi}_{/S_\infty}$. For an $\mathfrak{S}$-module
$\mathfrak{M}$ in $\Mod^{r,\phi}_{/\SG_\infty}$, we associate to it a $\Sigma$-module $M\in{}'\Mod^{r,\phi}_{/\Sigma}$ as follows:
\begin{itemize}
\item $M=\Sigma\otimes_{\phi,\SG}\mathfrak{M}$,
\item $\Fil^rM=\Ker(M\overset{1\otimes\phi}{\to}\Sigma\otimes_{\SG}\mathfrak{M}\to (\Sigma/\Fil^r\Sigma)\otimes_{\SG}\mathfrak{M})$,
\item $\phi_r:\Fil^r M\overset{1\otimes\phi}{\to}\Fil^r\Sigma\otimes_{\SG}\mathfrak{M}\overset{\phi_r\otimes 1}{\to}\Sigma\otimes_{\phi,\SG}\mathfrak{M}=M$.
\end{itemize}
We can check that this defines an exact functor
$\Mod^{r,\phi}_{/\SG_\infty}\to\Mod^{r,\phi}_{/\Sigma_\infty}$ as in
the proof of \cite[Proposition 1.1.11]{Ki_BT}. We let this functor be denoted by
$M_{\SG_\infty}$.

\begin{lem}\label{comm}
The diagram of functors
\[
\xymatrix{
\Mod^{r,\phi}_{/\SG_\infty}\ar[r]^{M_{\SG_\infty}}\ar[dr]_{\cM_{\SG_\infty}} & \Mod^{r,\phi}_{/\Sigma_\infty}\ar[d]^{\cM_{\Sigma_\infty}} \\
 & \Mod^{r,\phi}_{/S_\infty}
}
\]
is commutative.
\end{lem}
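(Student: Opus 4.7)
The plan is to construct, for each $\SGm \in \Mod^{r,\phi}_{/\SG_\infty}$, a natural isomorphism $\cM_{\Sigma_\infty}(M_{\SG_\infty}(\SGm)) \cong \cM_{\SG_\infty}(\SGm)$ in $\Mod^{r,\phi}_{/S_\infty}$, and to do so by matching the three pieces of structure (underlying $S$-module, filtration, Frobenius) one at a time.

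For the underlying $S$-module, associativity of tensor product gives
\[
S\otimes_\Sigma(\Sigma\otimes_{\phi,\SG}\SGm)\;\cong\;S\otimes_{\phi,\SG}\SGm,
\]
providing a canonical $S$-linear identification of the two objects. For the filtration, the ring isomorphism $S/\Fil^rS\cong W[u]/(E(u)^r)\cong \Sigma/\Fil^r\Sigma$ lets me realize both $\Fil^r$'s as the kernel of the single map
\[
S\otimes_{\phi,\SG}\SGm\overset{1\otimes\phi}{\longrightarrow} S\otimes_\SG\SGm\to (S/\Fil^rS)\otimes_\SG\SGm.
\]
For $\cM_{\SG_\infty}(\SGm)$ this is the definition; for $\cM_{\Sigma_\infty}(M_{\SG_\infty}(\SGm))$, the filtration is the kernel of $S\otimes_\Sigma M\to M/\Fil^rM$ with $M=M_{\SG_\infty}(\SGm)$, and $M/\Fil^rM$ embeds into $(\Sigma/\Fil^r\Sigma)\otimes_\SG\SGm$ via $1\otimes\phi$ by construction, so the two kernels match under the above identification.

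For the Frobenius, the crucial preliminary computation is that the structural $\phi:M\to M$ on $M=M_{\SG_\infty}(\SGm)$ viewed as an object of $'\Mod^{r,\phi}_{/\Sigma}$, given by $\phi(m)=c^{-r}\phi_r(E(u)^rm)$, acts on decomposable tensors by $\phi(a\otimes x)=\phi(a)\otimes\phi(x)$; this is a direct unwinding using the identity $\phi_r(E(u)^r)=c^r$. With this in hand, the patching definition of $\phi_r$ on $\cM_{\Sigma_\infty}(M)$, gluing $\phi'_r=\phi_r\otimes\phi$ on $\Fil^rS\otimes_\Sigma M$ with $\phi''_r=\phi\otimes\phi_r$ on the image of $S\otimes_\Sigma\Fil^rM$, matches the two-step $(1\otimes\phi,\phi_r\otimes 1)$ definition of $\phi_r$ on $\cM_{\SG_\infty}(\SGm)$ after a routine check on generators of $\Fil^r$, for instance those furnished by Lemma \ref{adaptedlem} after devissage to the $p$-torsion case.

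The main obstacle is the bookkeeping involved in simultaneously juggling the $\phi$-twisted and untwisted tensor products over both $\SG$ and $\Sigma$; once the identity $\phi(a\otimes x)=\phi(a)\otimes\phi(x)$ on $M_{\SG_\infty}(\SGm)$ is in hand, it is precisely what turns the patched formula for $\phi_r$ into the two-step formula used in defining $\cM_{\SG_\infty}$, and the remaining verification is formal.
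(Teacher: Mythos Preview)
Your proposal is correct and follows essentially the same approach as the paper: identify the underlying $S$-modules via associativity of tensor product, match the filtrations, and then check that the two $\phi_r$'s agree. Your filtration argument, realizing both $\Fil^r$'s as the kernel of the single composite $\cM\to (S/\Fil^rS)\otimes_{\SG}\SGm$, is in fact a little more direct than the paper's double-inclusion argument; conversely, your appeal to d\'evissage and Lemma~\ref{adaptedlem} for the Frobenius check is unnecessary (the verification on $\Fil^rS\otimes_\Sigma M$ and on $\Img(S\otimes_\Sigma\Fil^rM\to\cM)$ goes through directly using the identity $\phi(a\otimes x)=\phi(a)\otimes\phi(x)$ you computed), but it does no harm.
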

\begin{proof}
For $\SGm\in\Mod^{r,\phi}_{/\SG_\infty}$, put $M=M_{\SG_\infty}(\SGm)$ and $\cM=\cM_{\SG_\infty}(\SGm)$. Then $\cM=S\otimes_\Sigma M$ as an $S$-module. Let $\Fil^r\cM$ and $\phi_r:\Fil^r\cM\to\cM$ denote the filtration and Frobenius structure defined by the functor $\cM_{\SG_\infty}$. We also let $\hat{\Fil^r}\cM$ and $\hat{\phi}_r:\hat{\Fil^r}\cM\to\cM$ denote those defined by $\cM_{\Sigma_\infty}$.

The $S$-module $\Fil^r\cM$ contains $\hat{\Fil^r}\cM$. Conversely, let $z$ be an element of $\Fil^r\cM$. Note that $\Fil^pS\cdot\cM\subseteq\hat{\Fil^r}\cM$. Thus, to show $z\in \hat{\Fil^r}\cM$, we may assume that $z\in\Img(M\to\cM)$. Then the commutative diagram whose right vertical arrow is an isomorphism
\[
\begin{CD}
M=\Sigma\otimes_{\phi,\mathfrak{S}}\mathfrak{M} @>{1\otimes\phi}>>
 \Sigma\otimes_{\mathfrak{S}}\mathfrak{M} @>>>
 \Sigma/\Fil^r\Sigma\otimes_{\mathfrak{S}}\mathfrak{M} \\
@VVV @VVV @VVV\\
\cM=S\otimes_{\phi,\mathfrak{S}}\mathfrak{M} @>{1\otimes\phi}>>
 S\otimes_{\mathfrak{S}}\mathfrak{M} @>>>
 S/\Fil^rS\otimes_{\mathfrak{S}}\mathfrak{M} 
\end{CD}
\]
implies that $z\in\Img(\Fil^rM\to\Fil^r\cM)\subseteq \hat{\Fil^r}\cM$ and hence $\Fil^r\cM=\hat{\Fil^r}\cM$. From the definition, we also can show $\phi_r=\hat{\phi}_r$. This implies the lemma. 
\end{proof}

\begin{prop}\label{SSigeq}
The functor $\cM_{\Sigma_\infty}:\Mod^{r,\phi}_{/\Sigma_\infty}\to\Mod^{r,\phi}_{/S_\infty}$ is an equivalence of categories.
\end{prop}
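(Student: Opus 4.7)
The plan is to reduce essential surjectivity to the already known equivalence of categories between torsion Kisin modules and torsion Breuil modules over $S$. Corollary \ref{fullfaith} has already shown that $\cM_{\Sigma_\infty}$ is fully faithful, so it only remains to establish essential surjectivity.

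First I would invoke the theorem of Caruso--Liu (see \cite{CL}, extending Kisin's work \cite{Ki_BT} on the Barsotti--Tate case $r=1$): for $r<p-1$ the functor
\[
\cM_{\SG_\infty}:\Mod^{r,\phi}_{/\SG_\infty}\to\Mod^{r,\phi}_{/S_\infty}
\]
is an equivalence of categories. Then I would exploit the factorization $\cM_{\SG_\infty}=\cM_{\Sigma_\infty}\circ M_{\SG_\infty}$ proved in Lemma \ref{comm}. Given an arbitrary object $\cM\in\Mod^{r,\phi}_{/S_\infty}$, essential surjectivity of $\cM_{\SG_\infty}$ supplies $\SGm\in\Mod^{r,\phi}_{/\SG_\infty}$ together with an isomorphism $\cM_{\SG_\infty}(\SGm)\cong\cM$. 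Setting $M:=M_{\SG_\infty}(\SGm)\in\Mod^{r,\phi}_{/\Sigma_\infty}$, the factorization then gives $\cM_{\Sigma_\infty}(M)\cong\cM$, which is the desired essential surjectivity.

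Combining with the full faithfulness from Corollary \ref{fullfaith}, the functor $\cM_{\Sigma_\infty}$ is an equivalence. The argument is almost entirely formal once the Caruso--Liu equivalence is in hand; the only real work lies in verifying that input, which is an external prerequisite. A direct devissage proof is also conceivable, reducing via Lemma \ref{ptoreq} to lifting extension classes along $\cM_{\Sigma_\infty}$ using the adapted generators of Lemma \ref{adaptedlem} and Corollary \ref{C}, but the route through Kisin modules is cleaner and exploits the commutative diagram just proved.
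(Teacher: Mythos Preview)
Your approach is essentially the same as the paper's: use full faithfulness (Corollary \ref{fullfaith}), the factorization of Lemma \ref{comm}, and the Caruso--Liu equivalence for $\cM_{\SG_\infty}$ to conclude essential surjectivity. For $p\geq 3$ this is exactly what the paper does.

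There is, however, a small gap. The equivalence $\cM_{\SG_\infty}:\Mod^{r,\phi}_{/\SG_\infty}\to\Mod^{r,\phi}_{/S_\infty}$ from \cite[Theorem 2.3.1]{CL} is stated only for $p\geq 3$, not for all $r<p-1$; so your argument does not literally cover the case $p=2$, where the hypothesis $r<p-1$ forces $r=0$. The paper handles this residual case directly: using the isomorphism $\Sigma\simeq W[[u,u^{ep}/p]]$ one checks that $M\mapsto M/\kappa_\Sigma M$ (with $\kappa_\Sigma=\kappa\cap\Sigma$) gives an equivalence $\Mod^{0,\phi}_{/\Sigma_\infty}\to\Mod^{\phi}_{/W_\infty}$, and since the analogous equivalence for $S$ was already recorded, the commutative triangle through $\Mod^{\phi}_{/W_\infty}$ finishes the proof for $p=2$. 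You should either add this short argument or note explicitly that you are restricting to $p\geq 3$.
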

\begin{proof}
Since the functor $\cM_{\SG_\infty}$ is an equivalence of categories for $p\geq 3$ (\cite[Theorem 2.3.1]{CL}), Corollary \ref{fullfaith} and Lemma \ref{comm} imply the proposition in this case. For $r=0$, put $\kappa_\Sigma=\kappa\cap\Sigma$, where $\kappa=\Ker(S\to W)$. Then, by using a natural isomorphism $\Sigma \simeq W[[u,u^{ep}/p]]$, we can show that the functor $M\mapsto M/\kappa_\Sigma M$ defines an equivalence of categories $\Mod^{0,\phi}_{/\Sigma_\infty}\to \Mod^{\phi}_{/W_\infty}$, as in the case of the category $\Mod^{0,\phi}_{/S_\infty}$. Since the diagram
\[
\xymatrix{
\Mod^{0,\phi}_{/\Sigma_\infty}\ar[rd]\ar[r]^{\cM_{\Sigma_\infty}} & \Mod^{0,\phi}_{/S_\infty}\ar[d]\\
& \Mod^{\phi}_{/W_\infty}
}
\]
is commutative and the downward arrows are equivalences of categories, the proposition follows also for $p=2$.
\end{proof}

\begin{rmk}
We can also define a fully faithful functor $\cM_{\Sigma}:\Mod^{r,\phi}_{/\Sigma}\to\Mod^{r,\phi}_{/S}$ in a similar way to $\cM_{\Sigma_\infty}$ and prove that this is an equivalence of categories. Indeed, the claim for $p\geq 3$ follows from \cite[Theorem 2.2.1]{CL}. Let $\cM$ be in $\Mod^{0,\phi}_{/S}$ and $e_1,\ldots,e_d$ be a basis of $\cM$ over $S$. Let $C\in GL_d(S)$ be the matrix such that
\[
\phi(e_1,\ldots,e_d)=(e_1,\ldots,e_d)C.
\]
Then the elements $\phi(e_1),\ldots,\phi(e_d)$ also form a basis of $\cM$ and 
\[
\phi(\phi(e_1),\ldots,\phi(e_d))=(\phi(e_1),\ldots,\phi(e_d))\phi(C).
\]
Since $\phi(S)\subseteq \Sigma$, the $\Sigma$-module $M$ defined by $M=\Sigma\phi(e_1)\oplus\cdots\oplus\Sigma\phi(e_d)$ is stable under $\phi$. Hence we see that $M\in\Mod^{0,\phi}_{/\Sigma}$ and $\cM=\cM_\Sigma(M)$.
\end{rmk}

\begin{prop}\label{vsTcrys}
Let $M$ be an object of $\Mod^{r,\phi}_{/\Sigma_\infty}$ and set $\cM=\cM_{\Sigma_\infty}(M)$. Then there
 exists a natural isomorphism of $G_{K_\infty}$-modules
 \[
\Hom_{\Sigma,\Fil^r,\phi_r}(M,A_{\mathrm{crys},\infty})\to \Hom_{S,\Fil^r,\phi_r}(\cM,A_{\mathrm{crys},\infty}).
\]
Moreover, this induces for any $n$ an isomorphism of $G_{K_n}$-modules
\[
\Hom_{\Sigma,\Fil^r,\phi_r}(M,W_{n}^{\PD}(\tokbar))\to \Hom_{S,\Fil^r,\phi_r}(\cM,W_{n}^{\PD}(\tokbar)).
\]
\end{prop}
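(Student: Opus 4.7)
The plan is to exhibit explicit mutually inverse natural maps between the two Hom sets, using the structure $\cM=S\otimes_\Sigma M$, and then reduce the $W_n^{\PD}(\tokbar)$-statement to the $A_{\mathrm{crys},\infty}$-one by passing to $p^n$-torsion. First I would define the forward arrow $f\mapsto \tilde f$ by $\tilde f(s\otimes m)=s\cdot f(m)$, which is $S$-linear by $\Sigma$-linearity of $f$. To check $\tilde f$ respects $\Fil^r$, note that the description of $\cM_{\Sigma_\infty}$ gives $\Fil^r\cM=\Fil^rS\cdot\cM+\Img(S\otimes_\Sigma\Fil^rM\to\cM)$, and each summand lands in $\Fil^rA_{\mathrm{crys},\infty}$ because $f(\Fil^rM)\subseteq \Fil^rA_{\mathrm{crys},\infty}$. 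For compatibility with $\phi_r$, I would verify it on each of the two generating pieces of $\Fil^r\cM$: on $\Fil^rS\otimes_\Sigma M$ one has $\phi_r=\phi'_r=\phi_r\otimes\phi$, while on $\Img(S\otimes_\Sigma\Fil^rM\to\cM)$ one has $\phi_r=\phi''_r$ with $\phi''_r(1\otimes m)=1\otimes\phi_r(m)$ by Lemma \ref{phirwelldef}. A direct computation, using that $f$ commutes both with $\phi_r$ and (consequently) with $\phi=c^{-r}\phi_r\circ E(u)^r$, shows $\tilde f\circ\phi_r=\phi_r\circ\tilde f$ on both pieces; since $\phi'_r$ and $\phi''_r$ patch to the well-defined $\phi_r$ on $\Fil^r\cM$ by Lemma \ref{phirpatch}, this suffices.

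Next I would construct the inverse by restriction along $M\to\cM,\ m\mapsto 1\otimes m$: for $g\in\Hom_{S,\Fil^r,\phi_r}(\cM,A_{\mathrm{crys},\infty})$ set $g|_M(m)=g(1\otimes m)$. This is $\Sigma$-linear, respects $\Fil^r$ since $1\otimes\Fil^rM\subseteq\Fil^r\cM$, and commutes with $\phi_r$ because for $m\in\Fil^rM$ one has $g(1\otimes\phi_r(m))=g(\phi''_r(1\otimes m))=\phi_r(g(1\otimes m))$. The two composites are the identity: $(\widetilde{g|_M})(s\otimes m)=s\cdot g(1\otimes m)=g(s\otimes m)$ by $S$-linearity of $g$, and $\tilde f|_M(m)=f(m)$ by definition. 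Naturality of all the constructions gives $G_{K_\infty}$-equivariance; the restriction to $G_{K_\infty}$ here reflects the fact that the $S$-algebra structure on $A_{\mathrm{crys},\infty}$, sending $u\mapsto[\underline{\pi}]$, is stable only under $G_{K_\infty}$.

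Finally, for the $W_n^{\PD}(\tokbar)$-statement, since $M$ (and hence $\cM$) is killed by $p^n$, any morphism in either Hom set factors through the $p^n$-torsion submodule of $A_{\mathrm{crys},\infty}$. The short exact sequence $0\to A_{\mathrm{crys}}\to A_{\mathrm{crys}}\otimes_W K_0\to A_{\mathrm{crys},\infty}\to 0$ yields $A_{\mathrm{crys},\infty}[p^n]\cong A_{\mathrm{crys}}/p^nA_{\mathrm{crys}}$ as objects of $'\Mod^{r,\phi}_{/S}$, and this is identified $G_{K_n}$-equivariantly with $W_n^{\PD}(\tokbar)$ via the isomorphism recalled in the excerpt; the restriction from $G_K$ to $G_{K_n}$ is due precisely to the $S$-algebra structure $u\mapsto[\pi_n]$. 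Composing with the isomorphism of the first part produces the second isomorphism.

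The main obstacle is the $\phi_r$-compatibility of $\tilde f$: since $\Fil^r\cM$ is described as the sum of two subsets on which $\phi_r$ has a priori distinct formulas ($\phi'_r$ and $\phi''_r$), one must invoke the patching provided by Lemmas \ref{phirwelldef} and \ref{phirpatch} to legitimately reduce the check to each piece separately. Once that reduction is granted, everything else is bookkeeping with $S$-linearity and naturality.
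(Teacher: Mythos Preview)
Your argument for the first assertion is correct, but it takes a genuinely different route from the paper. The paper starts from the tensor--Hom adjunction $\Hom_\Sigma(M,A_{\mathrm{crys},\infty})\cong\Hom_S(\cM,A_{\mathrm{crys},\infty})$, notes that this restricts to an \emph{injection} on the $(\Fil^r,\phi_r)$-compatible subsets, and then obtains surjectivity by d\'evissage to the case $pM=0$ combined with the cardinality count of Lemma~\ref{cardinality} (and its $S$-analogue in \cite{B_ss}). You instead verify directly that the restriction $g\mapsto g|_M$ lands in $\Hom_{\Sigma,\Fil^r,\phi_r}$ and furnishes an explicit two-sided inverse. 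Your route is more elementary---it bypasses d\'evissage and the appeal to Breuil's counting lemma---and makes transparent that the isomorphism is formal from the description of $\Fil^r\cM$ together with the patching $\phi_r=\phi'_r\cup\phi''_r$ of Lemmas~\ref{phirwelldef} and~\ref{phirpatch}. The paper's route, by contrast, concentrates all verification on the forward direction (dispatched with ``from the definition, we can check'') and handles the reverse direction uniformly by counting.

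One small correction to the deduction of the second isomorphism: the statement is for \emph{every} $n$, not only for $n$ with $p^nM=0$, so you cannot assume that every map $M\to A_{\mathrm{crys},\infty}$ factors through the $p^n$-torsion. The fix is the one the paper uses: the exact sequence
\[
0\to W_n^{\PD}(\tokbar)\to A_{\mathrm{crys},\infty}\xrightarrow{\,p^n\,} A_{\mathrm{crys},\infty}\to 0
\]
in $'\Mod^{r,\phi}_{/\Sigma}$ identifies $\Hom_{\Sigma,\Fil^r,\phi_r}(M,W_n^{\PD}(\tokbar))$ with the $p^n$-torsion subgroup of $\Hom_{\Sigma,\Fil^r,\phi_r}(M,A_{\mathrm{crys},\infty})$ (and likewise over $S$), so the second isomorphism follows from the first by passing to $p^n$-torsion of the Hom groups.
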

\begin{proof}
By definition, $\cM=S\otimes_{\Sigma}M$ and we have a natural isomorphism
\[
\Hom_{\Sigma}(M,A_{\mathrm{crys},\infty})\to\Hom_S(\cM,A_{\mathrm{crys},\infty}).
\]
From the definition, we can check that this isomorphism induces the map in the proposition, which is injective. To prove the bijectivity, by devissage we may assume that
 $pM=0$. Then both sides of this injection have the same
 cardinality by Lemma \ref{cardinality} and the first assertion follows. Since the sequence 
\[
\xymatrix{
0 \ar[r] & W_{n}^{\PD}(\tokbar) \ar[r] & A_{\mathrm{crys},\infty} \ar[r]^{p^n} & A_{\mathrm{crys},\infty} \ar[r] & 0
}
\]
of the category $'\Mod^{r,\phi}_{/\Sigma}$ is exact, the first assertion implies the second one.
\end{proof}

%----------------------------------------------

%--------------------------------------------------------
\section{A method of Abrashkin}

In this section, we study the $G_{K_n}$-module $\Hom_{\Sigma,\Fil^r,\phi_r}(M,W_{n}^{\PD}(\tokbar))$ following
Abrashkin (\cite{Ab}).

Let $p$ and $0\leq r<p-1$ be as before. We fix a system of $p$-power roots of unity
$\{\zeta_{p^n}\}_{n\in\mathbb{Z}_{\geq 0}}$ in $\kbar$ such that $\zeta_p\neq 1$ and $\zeta_{p^{n}}=\zeta_{p^{n+1}}^{p}$ for any $n$, and set an element $\underline{\varepsilon}$ of $R$ to be 
$(\zeta_{p^n})_{n\in\mathbb{Z}_{\geq 0}}$. 
Then the elements $[\underline{\varepsilon}]-1$ and
$[\underline{\varepsilon}^{1/p}]-1$ are topologically nilpotent in $W(R)$. The
element of $W(R)$
\[
t=([\uep]-1)/([\uep^{1/p}]-1)=1+[\uep^{1/p}]+[\uep^{1/p}]^2+\cdots+[\uep^{1/p}]^{p-1}
\]
is a generator of the principal ideal $\Ker(\theta)$. We define an element $a\in W(R)$ to be
\[
a=\left\{\begin{array}{ll}
\sum_{k=1}^{p-2}p^{-1}((-1)^{p-1-k}{}_{p-1}C_k-1)[\uep^{1/p}]^k &\quad (p\geq 3) \\
-1 &\quad (p=2),
\end{array}\right.
\]
where ${}_{p-1}C_k=(p-1)!/(k!(p-1-k)!)$ is the binomial
coefficient. Note that the coefficient of $[\uep^{1/p}]^k$ in each term
is an integer. The element $a$ is invertible in the ring $W(R)$, since $\theta(a)=(\zeta_p-1)^{p-1}/p\in\mathcal{O}_{\mathbb{C}}^{\times}$ and the ideal $\Ker(\theta)$ is topologically nilpotent in $W(R)$.

The element
$Z=([\uep]-1)^{p-1}/p$ of $\Acrys$ is
topologically nilpotent and we have $\phi(t)=p(Z-\phi(a))$. Consider the formal power series ring $W(R)[[u']]$ with the $(t,u')$-adic topology and the continuous ring homomorphism $W(R)[[u']]\to \Acrys$ which sends $u'$ to $Z$. Let $\hat{A}$ denote the image of this homomorphism. Then we see that the ring $\hat{A}$ is $(t,Z)$-adically complete. Since we have $Z=at^{p-1}+t^p/p$, the element $t^p/p$ of $\Acrys$ is contained in the subring
 $\hat{A}$ and
 topologically nilpotent in this subring. Hence we can consider the ring
$\hat{A}$ as a
$\Sigma$-algebra by $u\mapsto [\upi]$. Put
$\Fil^i \hat{A}=(t^i,Z)$ for $0\leq i \leq p-1$. The
Frobenius endomorphism $\phi$ of $\Acrys$ preserves $\hat{A}$ and satisfies
$\phi(\Fil^i\hat{A})\subseteq p^i\hat{A}$ for $0\leq i\leq p-1$. Set
$\phi_r=p^{-r}\phi|_{\Fil^r\hat{A}}$. Then we can consider the ring
$\hat{A}$ also as an object of the category $'\Mod^{r,\phi}_{/\Sigma}$. Put $\hat{A}_n=\hat{A}/p^n\hat{A}$ and 
$\hat{A}_\infty=\hat{A}\otimes_W K_0/W$. We include here a proof of the following lemma stated in \cite[Subsection
3.2]{Ab}.

\begin{lem}\label{quotZ}
The natural inclusion $W(R)\to\hat{A}$ induces isomorphisms of $W(R)$-algebras
$W(R)/(([\uep]-1)^{p-1})\to \hat{A}/(Z)$ and $W_n(R)/(([\uep]-1)^{p-1})\to \hat{A}_n/(Z)$.
\end{lem}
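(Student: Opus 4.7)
The plan is to show bijectivity of the map $\bar\phi : W(R)/(([\uep]-1)^{p-1}) \to \hat A/(Z)$ and then pass to the mod-$p^n$ version. The map is well-defined because $([\uep]-1)^{p-1} = pZ$ lies in $(Z)\subseteq \hat A$. For surjectivity, I would use that $\hat A$ is by construction the image of $W(R)[[u']]$ under $u'\mapsto Z$: every element admits a representation $\sum_{i\geq 0} c_i Z^i$ with $c_i\in W(R)$, and modulo $(Z)$ such a sum is represented by its constant term $c_0\in W(R)$.

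The heart of the argument is injectivity, i.e.\ the identification $W(R)\cap Z\hat A = ([\uep]-1)^{p-1}W(R)$. Given $w\in W(R)$ with $w = Zx$ for some $x = \sum_{i\geq 0} c_i Z^i \in \hat A$, we get the equality $w = \sum_{i\geq 0} c_i Z^{i+1}$ in $\Acrys$. The key observation is that $p^j Z^j = ([\uep]-1)^{j(p-1)}$ already lies in $W(R)$. Multiplying by $p^N$ and splitting off the first $N$ terms, the tail $\sum_{i\geq N} c_i p^N Z^{i+1}$ lies in $p^N\Acrys$, and since $\Acrys$ is $p$-torsion free one has $W(R)\cap p^N\Acrys = p^N W(R)$. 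This yields
\[
\sum_{i=0}^{N-1} c_i\, p^{N-i-1}\,([\uep]-1)^{(i+1)(p-1)} \in p^N W(R)
\]
for every $N\geq 1$. Factoring out $([\uep]-1)^{p-1}$ and invoking the coprimality of $p$ and $[\uep]-1$ in the domain $W(R)$ (which holds because the image of $[\uep]-1$ in the valuation ring $R$ is nonzero, so $p\mid ([\uep]-1)^{p-1} y$ forces $p\mid y$), induction on $N$ then forces $p^{i+1}\mid c_i$ for every $i$. Writing $c_i = p^{i+1} c_i'$ and using $p^{i+1} Z^{i+1} = ([\uep]-1)^{(i+1)(p-1)}$, we obtain
\[
w = \sum_{i\geq 0} c_i'\,([\uep]-1)^{(i+1)(p-1)} \in ([\uep]-1)^{p-1}W(R),
\]
the convergence taking place in the $(p,t)$-adic topology on $W(R)$, in which the ideal $([\uep]-1)^{p-1}W(R)$ is closed.

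For the mod-$p^n$ statement, the $p$-torsion-freeness of $W(R)$ and $\hat A$ makes $W_n(R)\to \hat A_n$ the reduction of $W(R)\to \hat A$ modulo $p^n$, and the same argument goes through verbatim after reducing the congruences modulo $p^n$. The main obstacle is the divisibility deduction $p^{i+1}\mid c_i$, which rests jointly on the two nontrivial ingredients above: the identity $W(R)\cap p^N\Acrys = p^N W(R)$ and the coprimality of $p$ and $[\uep]-1$ in $W(R)$. Once these are in hand, the substitution $p^{i+1}Z^{i+1} = ([\uep]-1)^{(i+1)(p-1)}$ exhibits $w$ as a multiple of $([\uep]-1)^{p-1}$, completing the proof.
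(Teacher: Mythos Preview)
Your injectivity argument rests on the claim that $W(R)\cap p^{N}\Acrys = p^{N}W(R)$, and this is false. A concrete counterexample is the very element you begin with: $([\uep]-1)^{p-1}=pZ$ lies in $p\Acrys$ because $Z\in\Acrys$, but it does \emph{not} lie in $pW(R)$, since its image in $R=W(R)/pW(R)$ is $(\uep-1)^{p-1}$, which has $v_R$-valuation $p$ and is therefore nonzero. More generally, the reduction map $R\to\Acrys/p\Acrys$ has kernel $t^{p}R$ (this is visible from the presentation $\Acrys/p\Acrys\simeq R[Y_1,Y_2,\dots]/(t^{p},Y_1^{p},\dots)$), so $W(R)\cap p\Acrys\supseteq (p,t^{p})\supsetneq pW(R)$. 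The $p$-torsion-freeness of $\Acrys$ only guarantees that division by $p$ is well-defined \emph{in} $\Acrys$; it says nothing about whether the quotient lands back in the subring $W(R)$. This breaks the inductive deduction $p^{i+1}\mid c_i$, which is the heart of your argument.

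The paper avoids this trap by using Fontaine's $\phi$-stable ideals
\[
I^{[s]}B=\{x\in B\mid \phi^{i}(x)\in\Fil^{s}\Acrys\text{ for all }i\geq 0\}.
\]
Two facts from \cite{F_period} do all the work: $I^{[s]}W(R)=([\uep]-1)^{s}W(R)$, and the natural map $W(R)/I^{[s]}W(R)\to\Acrys/I^{[s]}\Acrys$ is injective. Since $Z\in I^{[p-1]}\Acrys$, the injection for $s=p-1$ factors as
\[
W(R)/(([\uep]-1)^{p-1})\to \hat{A}/(Z)\to \Acrys/I^{[p-1]}\Acrys,
\]
and the first arrow is therefore injective (and visibly surjective). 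The point is that naive $p$-divisibility is the wrong invariant inside $\Acrys$; the Frobenius-stable filtration $I^{[s]}$ is exactly what is needed to cut $W(R)$ out correctly.
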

\begin{proof}
For a subring $B$ of $\Acrys$, put
\[
I^{[s]}B=\{x\in B\ |\ \phi^i(x)\in\Fil^s\Acrys\text{ for any } i\}
\]
as in \cite[Subsection 5.3]{F_period}. Then we have
 $I^{[s]}W(R)=([\uep]-1)^sW(R)$ and the natural ring homomorphism
\[
W(R)/I^{[s]}W(R) \to \Acrys/I^{[s]}\Acrys
\]
is an injection (\cite[Proposition 5.1.3, Proposition
 5.3.5]{F_period}). Since the element $Z$ is contained in the ideal
 $I^{[p-1]}\Acrys$, this injection factors as
\[
W(R)/I^{[p-1]}W(R) \to \hat{A}/(Z)\to\Acrys/I^{[p-1]}\Acrys.
\]
Hence the former arrow is an isomorphism and the lemma follows.
\end{proof}

Therefore $\hat{A}/\Fil^r\hat{A}$ is $p$-torsion free and $p^n\Fil^r\hat{A}=\Fil^r\hat{A}\cap p^n\hat{A}$. Thus we can also consider $\hat{A}_n$ and 
$\hat{A}_\infty$ as objects of the category $'\Mod^{r,\phi}_{/\Sigma}$. The absolute Galois group
$G_{K_\infty}$ acts naturally on these $\Sigma$-modules.

\begin{lem}\label{A1}
We have a natural decomposition as an $R$-module
\[
\hat{A}_1=R/(t^p)\oplus (Z).
\]
\end{lem}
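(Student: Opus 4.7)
My plan is to combine the preceding Lemma \ref{quotZ} with a simple divisibility identity coming from the relation $\phi(t)=p(Z-\phi(a))$. The strategy is: first identify $\hat{A}_1/(Z)$ with $R/(t^p)$; then use the natural inclusion $W(R)\hookrightarrow\hat{A}$ reduced mod $p$ to produce an $R$-linear splitting.

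First I would identify the quotient. Reducing the statement of Lemma \ref{quotZ} modulo $p$ gives $\hat{A}_1/(Z)\cong R/(([\uep]-1)^{p-1})$. I would then verify the mod-$p$ identity $t^p\equiv ([\uep]-1)^{p-1}$ in $R$. From $t=1+[\uep^{1/p}]+\cdots+[\uep^{1/p}]^{p-1}=([\uep]-1)/([\uep^{1/p}]-1)$ and the characteristic-$p$ identity $(x-1)^p=x^p-1$ in $R$, one finds $t\equiv(\uep^{1/p}-1)^{p-1}\bmod p$, and raising to the $p$th power yields $t^p\equiv(\uep-1)^{p-1}$ in $R$. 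This produces the short exact sequence of $R$-modules
\[
0\longrightarrow (Z)\longrightarrow\hat{A}_1\longrightarrow R/(t^p)\longrightarrow 0.
\]

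Second, I would construct the splitting. The natural map $W(R)\hookrightarrow\hat{A}$ descends mod $p$ to an $R$-module map $R\to\hat{A}_1$ whose composition with the projection onto $\hat{A}_1/(Z)=R/(t^p)$ is tautologically the canonical quotient. It remains only to check that this map actually factors through $R/(t^p)$, i.e.\ that $t^p\in p\hat{A}$. For this I use $\phi(t)=p(Z-\phi(a))$, together with the fact that $\phi$ on $W(R)$ lifts the Frobenius, so $\phi(t)\equiv t^p\pmod{pW(R)}$. Combining these gives
\[
t^p=\phi(t)-(\phi(t)-t^p)=p(Z-\phi(a))-p\delta\in p\hat{A}
\]
for some $\delta\in W(R)\subseteq\hat{A}$. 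Hence the map $R\to\hat{A}_1$ kills $t^p$ and induces a well-defined $R$-linear section $R/(t^p)\to\hat{A}_1$ of the projection $\hat{A}_1\twoheadrightarrow R/(t^p)$. This section combined with the inclusion $(Z)\hookrightarrow\hat{A}_1$ gives the asserted direct sum decomposition.

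The only mildly subtle step is the divisibility $t^p\in p\hat{A}$; everything else is either the previous lemma or bookkeeping. The key point there is that both summands on the right-hand side of $t^p=p(Z-\phi(a))-p\delta$ are manifestly in $\hat{A}$ (rather than only in $\Acrys$): $Z\in\hat{A}$ by construction, $\phi(a)\in W(R)\subset\hat{A}$ by the explicit formula for $a$, and $\delta\in W(R)$. Once this divisibility is in hand, the rest of the argument is formal.
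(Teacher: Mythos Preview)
Your proof is correct and follows essentially the same route as the paper: both identify $\hat{A}_1/(Z)\cong R/(t^p)$ via Lemma~\ref{quotZ} and split the resulting short exact sequence using the map $R\to\hat{A}_1$ induced by $W(R)\hookrightarrow\hat{A}$. The only genuine difference is in how the key divisibility $t^p\in p\hat{A}$ is verified: you deduce it from $\phi(t)=p(Z-\phi(a))$ together with $\phi(t)\equiv t^p\pmod{pW(R)}$, whereas the paper argues directly that any $x\in R$ with $v_R(x)\geq p$ lifts to $[x]\in ([\uep]-1)^{p-1}W(R)+pW(R)=pZ\cdot W(R)+pW(R)\subset p\hat{A}$. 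Your argument is in fact slightly more economical, since you do not separately verify that the kernel of $R\to\hat{A}_1$ is \emph{exactly} $(t^p)$ (the paper does this using the explicit description of $\Acrys/p\Acrys$); you correctly observe that injectivity of the section is automatic once the composite with the projection is the identity.
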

\begin{proof}
Consider the natural inclusion $W(R)\to \hat{A}$. We claim that
 this induces an injection $R/(t^p)\to \hat{A}_1$.
Let $x$ be in the ring $R$. If
 the element $[x]\in W(R)$ is contained in $p\hat{A}$, then its image in
 $\Acrys/p\Acrys$ is zero. We have an isomorphism of $R$-algebras
\[
R[Y_1,Y_2,\ldots]/(t^p,Y_{1}^{p},Y_{2}^{p},\ldots)\to\Acrys/p\Acrys
\]
which sends $Y_i$ to the image of $t^{p^i}/p^i!$. Thus the element $x$ is contained in the ideal $(t^p)$. Conversely, if $v_R(x)\geq p$, then we have
\[
[x]=w([\uep]-1)^{p-1}+pw'
\]
for some $w,w'\in W(R)$ and this implies $[x]\in p\hat{A}$. Now we have the commutative diagram of $R$-algebras
\[
\xymatrix{
R/(t^p)\ar[r]\ar[rd]_{f} & \hat{A}_1\ar[d] \\
 & \hat{A}_1/(Z)
}
\]
and the map $f:R/(t^p)\to \hat{A}_1/(Z)$ is an isomorphism by Lemma \ref{quotZ}. Hence the lemma follows.
\end{proof}

Since $r<p-1$, from this lemma we can show the following lemma as in the proof of \cite[Lemme 2.3.1.3]{B_ss}.

\begin{lem}\label{Aexact}
The functor
\[
M\mapsto \Hom_{\Sigma,\Fil^r,\phi_r}(M,\hat{A}_\infty)
\]
from $\Mod^{r,\phi}_{/\Sigma_\infty}$ to the category of $G_{K_\infty}$-modules
 is exact.
\end{lem}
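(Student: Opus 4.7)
The functor is trivially left exact, so I only need right exactness. Given a short exact sequence $0\to M'\to M\to M''\to 0$ in $\Mod^{r,\phi}_{/\Sigma_\infty}$ and a morphism $f':M'\to\hat{A}_\infty$, I want to extend $f'$ to $M$. My plan follows the template of \cite[Lemme 2.3.1.3]{B_ss}: reduce to a base case by devissage, then split the resulting extension by a fixed-point argument whose convergence rests on Lemma \ref{A1}.

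For the devissage, I use the filtration of $M$ (and of $M'$, $M''$) by subquotients in $\Mod^{r,\phi}_{/\Sigma_1}$ supplied by the very definition of $\Mod^{r,\phi}_{/\Sigma_\infty}$, together with the filtration of $\hat{A}_\infty=\varinjlim\hat{A}_n$ by the short exact sequences $0\to\hat{A}_{n-1}\to\hat{A}_n\to\hat{A}_1\to 0$ (which use that $\hat{A}$ is $p$-torsion free). A standard diagram chase then reduces the problem to the following base case: for every $M''\in\Mod^{r,\phi}_{/\Sigma_1}$, every short exact sequence
\[ 0\to\hat{A}_1\to\mathcal{E}\to M''\to 0 \]
in $'\Mod^{r,\phi}_{/\Sigma}$ admits a section inside $'\Mod^{r,\phi}_{/\Sigma}$.

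To split such an extension, I pick a basis $e_1,\ldots,e_d$ of $M''$ and a matrix $C\in M_d(\Sigma)$ adapted to the filtration as in Lemma \ref{adaptedp} and Corollary \ref{C}, and lift the $e_i$ to $\hat{e}_i\in\mathcal{E}$ with $(\hat{e}_1,\ldots,\hat{e}_d)C\in(\Fil^r\mathcal{E})^{\oplus d}$. Writing $\phi_r((\hat{e}_1,\ldots,\hat{e}_d)C)=(\hat{e}_1+\delta_1,\ldots,\hat{e}_d+\delta_d)$ for some $\delta_i\in\hat{A}_1$, exactly as in the proof of Corollary \ref{quotlem}, the problem of modifying the $\hat{e}_i$ into a splitting reduces to finding $(x_1,\ldots,x_d)\in\hat{A}_1^{\oplus d}$ satisfying the fixed-point equation
\[ (x_1,\ldots,x_d)=(\delta_1,\ldots,\delta_d)+\phi_r((x_1,\ldots,x_d)C). \]
The formal solution is the sum $\sum_{i\geq 0}\phi_r^i(\delta_1,\ldots,\delta_d)\phi(C)\cdots\phi^i(C)$, which is no longer eventually zero as in Corollary \ref{quotlem}, so I must establish its convergence in a suitable topology on $\hat{A}_1^{\oplus d}$.

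This convergence is the main obstacle, and it is precisely where Lemma \ref{A1} is decisive. The decomposition $\hat{A}_1=R/(t^p)\oplus(Z)$ lets me analyze the $\phi_r$-iterates on each summand separately: on the $(Z)$-summand, the inclusion $Z\in\Fil^{p-1}\hat{A}$ combined with the hypothesis $r<p-1$ ensures that $\phi_r=p^{-r}\phi$ gains extra $p$-divisibility at each step, driving the sequence into higher and higher powers of the topologically nilpotent ideal $(Z)$; on the $R/(t^p)$-summand, the $p$-power Frobenius raises the $t$-adic order. Combining these two contractions yields convergence in a natural $(t,Z)$-adic topology on $\hat{A}_1^{\oplus d}$, hence the required solution, parallel to the argument of \cite[Lemme 2.3.1.3]{B_ss}. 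The strict bound $r<p-1$ is essential for the $p$-gain at each Frobenius step.
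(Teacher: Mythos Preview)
Your approach is essentially the same as the paper's, which simply says to combine Lemma \ref{A1} with the argument of \cite[Lemme 2.3.1.3]{B_ss}. One small sharpening of your convergence analysis: on the $(Z)$-summand the situation is even cleaner than you describe, since $\phi(Z)\in p^{p-1}\hat{A}$ together with $r<p-1$ gives $\phi_r(Z)\in p\hat{A}$, so $\phi_r$ annihilates the ideal $(Z)$ in $\hat{A}_1$ outright rather than merely pushing it into higher powers.
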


\begin{cor}\label{AnAcrys}
For any $M\in\Mod^{r,\phi}_{/\Sigma_\infty}$, the natural map
\[
\Hom_{\Sigma,\Fil^r,\phi_r}(M,\hat{A}_\infty)\to \Hom_{\Sigma,\Fil^r,\phi_r}(M,A_{\mathrm{crys},\infty})
\]
is an isomorphism of $G_{K_\infty}$-modules. Moreover, for any $n$, we have an isomorphism of $G_{K_\infty}$-modules
\[
\Hom_{\Sigma,\Fil^r,\phi_r}(M,\hat{A}_n)\to \Hom_{\Sigma,\Fil^r,\phi_r}(M,A_{\mathrm{crys}}/p^nA_{\mathrm{crys}}).
\]
\end{cor}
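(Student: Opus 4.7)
The plan is to apply Corollary \ref{quotlem} twice, after factoring the natural map $\hat{A}_\infty\to A_{\mathrm{crys},\infty}$ through its image and reducing to the case when $M$ is killed by $p$. First, since $\hat{A}$ and $A_{\mathrm{crys}}$ are $p$-torsion free, we have natural identifications $\hat{A}_n=\hat{A}_\infty[p^n]$ and $A_{\mathrm{crys}}/p^nA_{\mathrm{crys}}=A_{\mathrm{crys},\infty}[p^n]$ in $'\Mod^{r,\phi}_{/\Sigma}$ compatible with the $G_K$-action. For any $M\in\Mod^{r,\phi}_{/\Sigma_\infty}$ and any $n$, every morphism from $M$ into $\hat{A}_n$ or $A_{\mathrm{crys}}/p^nA_{\mathrm{crys}}$ factors through $M/p^nM$, which is killed by $p^n$, so the second isomorphism follows from the first applied to $M/p^nM$. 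To prove the first, Lemmas \ref{Aexact} and \ref{ACexact} make both Hom-functors exact on $\Mod^{r,\phi}_{/\Sigma_\infty}$, and a five-lemma d\'evissage along the extensions defining this category reduces the problem to $M\in\Mod^{r,\phi}_{/\Sigma_1}$. For such $M$ we have $\Hom(M,\hat{A}_\infty)=\Hom(M,\hat{A}_1)$ and $\Hom(M,A_{\mathrm{crys},\infty})=\Hom(M,A_{\mathrm{crys}}/pA_{\mathrm{crys}})$, so it suffices to show that the natural map $\Hom(M,\hat{A}_1)\to\Hom(M,A_{\mathrm{crys}}/pA_{\mathrm{crys}})$ is an isomorphism.

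I would factor the ring map $\hat{A}_1\to A_{\mathrm{crys}}/pA_{\mathrm{crys}}$ as $\hat{A}_1\twoheadrightarrow I\hookrightarrow A_{\mathrm{crys}}/pA_{\mathrm{crys}}$ with $I$ the image, and let $K=\Ker(\hat{A}_1\twoheadrightarrow I)$. Using the classical presentation
\[
A_{\mathrm{crys}}/pA_{\mathrm{crys}}\cong R[Y_1,Y_2,\ldots]/(t^p,Y_1^p,Y_2^p,\ldots)
\]
already exploited in the proof of Lemma \ref{A1}, with $Y_i$ the image of the divided power $\gamma_{p^i}(t)=t^{p^i}/(p^i)!$, and the relation $t^p/p=Z-at^{p-1}$ that places $Y_1$ in $\hat{A}$, one identifies $I$ with $R[Y_1]/(t^p,Y_1^p)$ and obtains a $\Sigma$-module direct sum decomposition
\[
A_{\mathrm{crys}}/pA_{\mathrm{crys}}=I\oplus J,
\]
where $J$ is the ideal generated by $Y_i$ for $i\geq 2$; in particular the inclusion $I\hookrightarrow A_{\mathrm{crys}}/pA_{\mathrm{crys}}$ is a section of the projection. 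Using Lemma \ref{quotZ}, one similarly describes $K$; its generators include the classes $pY_i\bmod p\hat{A}$ for $i\geq 2$, which are nontrivial because $pY_i=u_i^{-1}(Z-at^{p-1})^p$ lies in $\hat{A}$ while $Y_i$ itself does not.

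The crucial step is the verification of the hypotheses of Corollary \ref{quotlem} for $J\subseteq\Fil^r(A_{\mathrm{crys}}/pA_{\mathrm{crys}})$ and $K\subseteq\Fil^r\hat{A}_1$. Since $p^i>r$, each $Y_i$ lies in $\Fil^{p^i}\subseteq\Fil^r$, and the direct computation
\[
\phi(Y_i)=\gamma_{p^i}(p(Z-\phi(a)))=p^{p^i-v_p((p^i)!)}u_i(Z-\phi(a))^{p^i},
\]
combined with Legendre's formula $v_p((p^i)!)=(p^i-1)/(p-1)$ and the hypothesis $r<p-1$, shows that the exponent of $p$ strictly exceeds $r$, so $\phi_r(Y_i)\in pA_{\mathrm{crys}}$ and hence vanishes in $A_{\mathrm{crys}}/pA_{\mathrm{crys}}$. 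By $\phi$-semilinearity this gives $\phi_r(J)=0$, and an analogous computation, picking up an additional factor of $p$ from the generators, gives $\phi_r(K)=0$ in $\hat{A}_1$. With these verifications, Corollary \ref{quotlem} applied to $(\hat{A}_1,K)$ and $(A_{\mathrm{crys}}/pA_{\mathrm{crys}},J)$ yields isomorphisms $\Hom(M,\hat{A}_1)\xrightarrow{\sim}\Hom(M,I)\xleftarrow{\sim}\Hom(M,A_{\mathrm{crys}}/pA_{\mathrm{crys}})$; the uniqueness of lifts in the second application, combined with the fact that the inclusion $I\hookrightarrow A_{\mathrm{crys}}/pA_{\mathrm{crys}}$ is a section of the projection, forces its inverse to be induced by this inclusion. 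Composing yields the desired natural, $G_{K_\infty}$-equivariant isomorphism. The main obstacle I anticipate is the explicit $\phi_r$-nilpotency analysis of $K$, which requires carefully unpacking the divisibility relations in $\hat{A}\subseteq A_{\mathrm{crys}}$ via Lemma \ref{quotZ}.
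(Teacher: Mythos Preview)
Your overall strategy---d\'evissage to $M$ killed by $p$ via Lemmas \ref{ACexact} and \ref{Aexact}, then applying Corollary \ref{quotlem} to pass from both $\hat{A}_1$ and $A_{\mathrm{crys}}/pA_{\mathrm{crys}}$ to a common quotient---is exactly the paper's. The difference is your choice of that quotient: you take the image $I$ of $\hat{A}_1$ in $A_{\mathrm{crys}}/pA_{\mathrm{crys}}$, whereas the paper takes the smaller ring $R/(t^{p-1})$, obtained from either side by reducing modulo $\Fil^{p-1}$. With the paper's choice the hypotheses of Corollary \ref{quotlem} are immediate: since $\phi(\Fil^{p-1})\subseteq p^{p-1}$ and $r<p-1$, one has $\phi_r(\Fil^{p-1}\hat{A}_1)=0$ and $\phi_r(\Fil^{p-1}(A_{\mathrm{crys}}/pA_{\mathrm{crys}}))=0$ with no divided-power bookkeeping. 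Your Legendre-formula calculation for $J$ is correct but is just a special case of this, since each $Y_i$ with $i\geq 2$ already lies in $\Fil^{p-1}$.

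The genuine gap is on the $K$ side. You have not checked that $K\subseteq\Fil^r\hat{A}_1$, which is needed even to formulate the condition $\phi_r(K)\subseteq K$ in Corollary \ref{quotlem}; and $K$ is indeed nonzero (it contains the class of $Y_1^p$, equivalently of $Z^p$, since $Y_2\notin\hat{A}$). One can show $K\subseteq(Z)\subseteq\Fil^{p-1}\hat{A}_1$ by combining the decomposition of Lemma \ref{A1} with the fact that $R/(t^p)\cap(\bar{Z})=0$ inside $I$, after which $\phi_r(K)=0$ follows from $\phi_r(\Fil^{p-1})=0$---but at that point you are using the paper's key observation anyway. You would also still owe a check that the induced bijection $\hat{A}_1/K\to(A_{\mathrm{crys}}/pA_{\mathrm{crys}})/J$ matches the two $\Fil^r$'s. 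All of this extra work disappears if you quotient both sides by $\Fil^{p-1}$ from the start.
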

\begin{proof}
Let us prove the first assertion. By Lemma \ref{ACexact} and Lemma \ref{Aexact}, we may assume
 $pM=0$. Consider the commutative diagram of rings
\[
\xymatrix{
\hat{A}_1\ar[r]\ar[rd] & \Acrys/p\Acrys\ar[d]\\
& R/(t^{p-1})
}
\]
whose downward arrows are defined by modulo $\Fil^{p-1}$ of the rings $\hat{A}_1$ and
 $\Acrys/p\Acrys$, respectively. Since $r<p-1$, we have
 $\phi_r(\Fil^{p-1}\hat{A}_1)=0$ and similarly for the ring
 $\Acrys/p\Acrys$. Thus these two surjections induce on the
 ring $R/(t^{p-1})$ the same
 structure of a filtered $\phi_r$-module over $\Sigma$. By Corollary \ref{quotlem}, we have a commutative diagram
\[
\xymatrix{
\Hom_{\Sigma,\Fil^r,\phi_r}(M,\hat{A}_1)\ar[r]\ar[rd] &
 \Hom_{\Sigma,\Fil^r,\phi_r}(M,\Acrys/p\Acrys)\ar[d]\\
& \Hom_{\Sigma,\Fil^r,\phi_r}(M,R/(t^{p-1}))
}
\]
whose downward arrows are isomorphisms. This concludes the proof of the first assertion. Since we have an exact sequence
\[
\xymatrix{
0 \ar[r] & \hat{A}_n \ar[r] & \hat{A}_\infty \ar[r]^{p^n} & \hat{A}_\infty \ar[r] & 0
}
\]
in the category $'\Mod^{r,\phi}_{/\Sigma}$, the second assertion follows.
\end{proof}

Since the ideal $(Z)$ of $\hat{A}_n$ satisfies the condition of Corollary
\ref{quotlem}, the $\Sigma$-algebra $\hat{A}_n/(Z)$ is naturally considered as
an object of $'\Mod^{r,\phi}_{/\Sigma}$. We also give the ring
$W_n(R)/(([\uep]-1)^{p-1})$ the structures of a
$\Sigma$-algebra and a filtered
$\phi_r$-module over $\Sigma$ induced from those of $\hat{A}_n/(Z)$ by the isomorphism in Lemma \ref{quotZ}. The map 
\[
\Sigma\to W_n(R)/(([\uep]-1)^{p-1})
\]
 sends the
element $u\in\Sigma$ to the image of 
$[\upi]$ in the ring on the right-hand side. Put $v=t/E([\upi])\in W(R)^\times$. As for the
element $Y\in\Sigma$, the equality
\[
Y=-av^{-1}E([\underline{\pi}])^{p-1} + v^{-p}Z
\]
holds in $\hat{A}$. Hence the above homomorphism sends the element $Y$ to the
image of $-av^{-1}E([\underline{\pi}])^{p-1}$.

Consider the surjective ring homomorphism
\begin{align*}
R\to \tokbar \\
x=(x_0,x_1,\ldots) \mapsto x_n
\end{align*}
and the induced surjection $\beta_n: W_n(R)\to W_n(\tokbar)$. Let 
\[
J=\{(x_0,\ldots,x_{n-1})\in W_n(R)\mid v_R(x_i)\geq p^n\text{ for any
}i\}
\]
be the kernel of the latter surjection.

\begin{lem}\label{ker}
The ideal $J$ is contained in the ideal $(([\uep]-1)^{p-1})$ of the
 ring $W_n(R)$.
\end{lem}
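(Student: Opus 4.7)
The plan is to prove the following stronger claim by induction on $n \geq 1$: for every integer $m \geq 0$, any Witt vector $c = (c_0, \ldots, c_{n-1}) \in W_n(R)$ with $v_R(c_i) \geq p^{n+m}$ for all $i$ lies in the ideal $([\uep^{p^m}] - 1)^{p-1} W_n(R)$. The lemma is then the case $m = 0$, since $J$ is by definition the set of such vectors with valuation bound $p^n$.

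The base case $n = 1$ is clear: $R$ is a valuation ring and $(\uep^{p^m} - 1)^{p-1} = (\uep - 1)^{p^m(p-1)}$ (using $\mathrm{char}(R) = p$) has $v_R$-valuation exactly $p^{m+1}$, so the claim is just divisibility in a valuation ring. For the inductive step with $n \geq 2$, I first use the valuation ring property to factor $c_0 = (\uep^{p^m} - 1)^{p-1} b$ with $b \in R$ satisfying $v_R(b) \geq p^{n+m} - p^{m+1}$, and consider the auxiliary element $x = ([\uep^{p^m}] - 1)^{p-1} [b] \in W_n(R)$. The standard identity $(\beta \cdot [b])_i = \beta_i \cdot b^{p^i}$ for products against a Teichm\"uller lift (easily verified via ghost components in the $p$-torsion free ring $W(R)$) shows that the $0$-th Witt component of $x$ equals $c_0$, so that $c - x$ lies in the image of Verschiebung $V \colon W_{n-1}(R) \to W_n(R)$.

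Next I would propagate the valuation estimates. Since the components of $x$ are of the form $\beta_i b^{p^i}$ with $\beta_i \in R$, the bound $v_R(b^{p^i}) \geq p^i(p^{n+m} - p^{m+1}) \geq p^{n+m}$ for $i \geq 1$ (valid since $n \geq 2$ and $p \geq 2$) gives $v_R((x)_i) \geq p^{n+m}$ for all $i$. Because the universal Witt subtraction polynomials have integer coefficients and vanish at the origin, these lower bounds propagate, yielding $v_R((c-x)_i) \geq p^{n+m}$ for every $i$. Writing $c - x = V(c')$ uniquely with $c' = (c'_0, \ldots, c'_{n-2}) \in W_{n-1}(R)$, the components of $c'$ are the higher components of $c - x$, so $v_R(c'_j) \geq p^{n+m} = p^{(n-1) + (m+1)}$ for all $j$. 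By the inductive hypothesis applied with the shifted parameters $(n-1, m+1)$, there exists $d \in W_{n-1}(R)$ with $c' = ([\uep^{p^{m+1}}] - 1)^{p-1} d$.

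Finally, I would invoke the Frobenius--Verschiebung reciprocity $z \cdot V(y) = V(F(z) \cdot y)$ together with the identity $F([\uep^{p^m}] - 1) = [\uep^{p^{m+1}}] - 1$ to conclude that $([\uep^{p^m}] - 1)^{p-1} V(d) = V(c') = c - x$, whence $c = ([\uep^{p^m}] - 1)^{p-1}([b] + V(d))$, completing the induction. The main technical obstacle is the valuation propagation in the middle step: one must handle Witt vector arithmetic in characteristic $p$ carefully, but the argument ultimately reduces to the fact that the universal Witt polynomials have no constant term and have integer coefficients, so $v_R$-bounds on the inputs transfer to $v_R$-bounds on the outputs.
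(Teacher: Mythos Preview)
Your proof is correct and rests on the same core computation as the paper's: peel off one Witt component at a time using the Teichm\"uller multiplication identity $(\beta\cdot[b])_i=\beta_i\,b^{p^i}$, then check that the remaining components still satisfy the required valuation bound. The packaging of the induction differs. The paper keeps $n$ fixed and inducts on the number of leading zeros of $z$: at step $i$ it writes $z_i=x_0^{p^i}y_i$, subtracts $x\cdot V^i([y_i])$ (using the explicit formula for $X\cdot V^i([Y_i])$ in characteristic $p$), and checks the result is still in $J$. You instead induct on $n$ with an auxiliary parameter $m$, passing to $W_{n-1}(R)$ via $V$ and invoking $z\cdot V(y)=V(F(z)\cdot y)$ together with $F([\uep^{p^m}]-1)=[\uep^{p^{m+1}}]-1$; the parameter $m$ is exactly what tracks the Frobenius twist accumulated this way. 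Your argument is a bit more structural (standard $F,V$ identities instead of an explicit product formula), at the cost of carrying the extra parameter; the paper's is more hands-on but avoids strengthening the statement. Either way the key valuation estimate is the same inequality $p^{k}(p^{n}-p)\geq p^{n}$ for $k\geq 1$ and $n\geq 2$ (up to the shift by $m$).
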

\begin{proof}
Write the element $([\uep]-1)^{p-1}$ also as $x=(x_0,\ldots,x_{n-1})\in W_n(R)$ 
with $v_R(x_0)=p$. Take an element $z=(z_0,\ldots,z_{n-1})$ of the ideal
 $J$. We construct
$y\in W_n(R)$ such that $xy=z$. By induction, it is
enough to show that if $z_0=\cdots=z_{i-1}=0$ for some $0\leq i\leq n-1$
and $(x_0,\ldots,x_i)(0,\ldots,0,y_i)=(0,\ldots,0,z_i)$ in $W_{i+1}(R)$, then
$x(0,\ldots,0,y_i,0,\ldots,0)\in J$. Let us write this element as $(0,\ldots,0,w_i,\ldots,w_{n-1})$ with $w_i=z_i$. We have 
$v_R(y_i)\geq p^n-p^{i+1}$. 
In the ring of Witt vectors
 $W_n(\mathbb{F}_p[X_0,\ldots,X_{n-1},Y_0,\ldots,Y_{n-1}])$, the $k$-th
 entry of the vector
\[
(X_0,\ldots,X_{n-1})(0,\ldots,0,Y_i,0,\ldots,0)
\]
is $X_{k-i}^{p^i}Y_{i}^{p^{k-i}}$ for any $k\geq i$. Thus we have
 $v_R(w_k)\geq p^n$.
\end{proof}

Note that the elements
$[\zeta_{p^n}]-1$ and $[\zeta_{p^{n+1}}]-1$ are nilpotent in
$W_n(\tokbar)$. By the above lemma, we have an isomorphism of rings
\[
W_n(R)/(([\uep]-1)^{p-1}) \to W_n(\tokbar)/(([\zeta_{p^n}]-1)^{p-1}).
\]
We let $\bar{A}_{n,p-1}$ denote the ring on the right-hand side and give the ring $\bar{A}_{n,p-1}$ the structure of a
filtered $\phi_r$-module over $\Sigma$ induced by this isomorphism.

For an algebraic extension $F$ of $K$, we put
\[
\mathfrak{b}_F=\{ x\in\oef\mid v_K(x)>er/(p-1)\}.
\]
Note that the ring $\oef/\mathfrak{b}_F$ is killed by $p$. We consider the ring of Witt vectors $W_n(\oef/\beF)$ as a $W_n(\oef)$-algebra by the natural ring surjection $W_n(\oef)\to W_n(\oef/\beF)$ and as a $W_n$-algebra by twisting the natural action by $\sigma^{-n}$, as before. For a ring $B$ and its ideal $I$, we define an ideal $W_n(I)$ of the ring $W_n(B)$ to be
\[
W_n(I)=\{ (x_0,\ldots,x_{n-1})\in W_n(B)\mid x_i\in I\text{ for any }i\}.
\]

Put $F_{n}=K_n(\zeta_{p^{n+1}})$. For an algebraic extension
$F$ of $F_n$ in $\kbar$, the
elements $[\zeta_{p^n}]-1$ and $[\zeta_{p^{n+1}}]-1$ of $W_n(m_F)$ are topologically
nilpotent non-zero divisors in $W_n(\oef)$. Let the ring
\[
W_n(\oef/\beF)/([\zeta_{p^n}]-1)^r W_n(m_F/\beF)
\]
be denoted by $\bar{A}_{n,F,r+}$.
We also put $\bar{A}_{n,r+}=\bar{A}_{n,\kbar,r+}$.

\begin{lem}\label{Wittlem}
The ideal $([\zeta_{p^n}]-1)^rW_n(m_F)$ of $W_n(\oef)$ contains the ideal $W_n(\beF)$ for any $r\in\{0,\ldots,p-2\}$. We also have $(([\zeta_{p^n}]-1)^{p-1})\supseteq W_n(p\oef)$. 
\end{lem}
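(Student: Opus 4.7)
Both inclusions are principal-ideal containments in $W_n(\oef)$, proved by constructing explicit witnesses via ghost coordinates. Since $\oef$ is $p$-torsion free, the ghost map $W_n(\oef)\hookrightarrow\oef^n$ is injective, and the $i$-th ghost of $[\zeta_{p^n}]-1$ equals $\zeta_{p^{n-i}}-1$. I sketch the first inclusion; the second is handled by the same strategy.

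For the first inclusion, given $x=(x_0,\ldots,x_{n-1})\in W_n(\beF)$ with each $v_K(x_j)>er/(p-1)$, I define candidate ghosts
\[
w_i(y):=w_i(x)/(\zeta_{p^{n-i}}-1)^r,\qquad 0\leq i\leq n-1,
\]
so that any witness $y$ to $x=([\zeta_{p^n}]-1)^r y$ must have these ghosts. The valuation bound
\[
v_K(w_i(x))\ \geq\ \min_{0\leq j\leq i}\bigl(je+p^{i-j}v_K(x_j)\bigr)\ >\ ie+\tfrac{er}{p-1},
\]
in which the minimum is attained at $j=i$ for $r\geq 1$, combined with $v_K((\zeta_{p^{n-i}}-1)^r)=er/(p^{n-i-1}(p-1))\leq er/(p-1)$, yields $v_K(w_i(y))>ie$.

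It remains to verify that the tuple $(w_i(y))$ is the ghost sequence of an actual element $y\in W_n(\oef)$; once established, the refined inequality $v_K(w_i(y))>ie$ together with the recursion $p^j y_j=w_j(y)-\sum_{k<j}p^k y_k^{p^{j-k}}$ forces each $y_j\in m_F$ inductively. The ghost-tuple condition amounts to the divisibility $p^j\mid w_j(y)-\sum_{k<j}p^k y_k^{p^{j-k}}$ in $\oef$, which starts from the corresponding divisibility for $x\in W_n(\oef)$ and then requires controlling the discrepancy $(\zeta_{p^{n-i+1}}-1)^{rp}-(\zeta_{p^{n-i}}-1)^r$ modulo suitable powers of $p$. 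The identity
\[
\zeta_{p^{n-i}}-1\ =\ \zeta_{p^{n-i+1}}^p-1\ =\ (\zeta_{p^{n-i+1}}-1)^p+\sum_{k=1}^{p-1}\binom{p}{k}(\zeta_{p^{n-i+1}}-1)^k
\]
exhibits this discrepancy as a $p$-divisible combination of powers of $\zeta_{p^{n-i+1}}-1$, and the constraint $r<p$ makes the binomial-coefficient bookkeeping go through. The second inclusion is proved analogously with $r$ replaced by $p-1$ and target $W_n(m_F)$ replaced by the full $W_n(\oef)$, using $v_K((\zeta_{p^{n-i}}-1)^{p-1})=e/p^{n-i-1}\leq e\leq v_K(w_i(x))$ to place the $w_i(y)$ in $\oef$. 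The Witt-congruence verification is the main technical obstacle; apart from it, the proof is a direct valuation bookkeeping on ghost coordinates.
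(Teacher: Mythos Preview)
Your ghost-coordinate strategy is different from the paper's and leaves the hardest step undone. The paper (following the template of Lemma~\ref{ker}) works directly in Witt coordinates rather than ghosts: writing $x=([\zeta_{p^n}]-1)^r=(x_0,\ldots,x_{n-1})$ and given $z\in W_n(\beF)$, it constructs $y\in W_n(m_F)$ with $xy=z$ one coordinate at a time. If $z_0=\cdots=z_{i-1}=0$, one solves $x_0^{p^i}y_i=z_i$ for $y_i$, checks $y_i\in m_F$ from the valuation of $x_0$, and then uses the ghost identity for the product $x\cdot(0,\ldots,0,y_i,0,\ldots,0)$ to see that the remainder $z-x\cdot(0,\ldots,0,y_i,0,\ldots,0)$ still lies in $W_n(\beF)$ with one more leading zero. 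No ``is this tuple a ghost sequence?'' question ever arises.

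Your route has two soft spots. First, the Witt-congruence verification is not actually carried out, and there is no Frobenius lift on $\oef$ to invoke Dwork's criterion; doing it by hand means controlling $w_j(x)/(\zeta_{p^{n-j}}-1)^r-\bigl(w_{j-1}(x)/(\zeta_{p^{n-j+1}}-1)^r\bigr)^p$ modulo $p^j$, which entangles your binomial identity with the unknown Witt components $x_k$ in a way at least as intricate as the paper's induction. Second, even granting $y\in W_n(\oef)$, the bare inequality $v_K(w_i(y))>ie$ does \emph{not} force $y_j\in m_F$ as you claim: from $p^jy_j=w_j(y)-\sum_{k<j}p^ky_k^{p^{j-k}}$ and only $y_k\in m_F$ you get $v_K(p^ky_k^{p^{j-k}})>ke$, which can be well below $je$. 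To close this you would have to propagate the sharper bound $v_K(y_k)>(er/(p-1))(1-p^{-(n-1-k)})$ through the recursion, and that bookkeeping is essentially what the paper does---only the paper's Witt-coordinate induction packages it more cleanly.
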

\begin{proof}
The proof is similar to the proof of Lemma \ref{ker}. Let us show the first assertion. Since this is trivial for $r=0$, we may assume $r\geq 1$. Put $x=(x_0,\ldots,x_{n-1})=([\zeta_{p^n}]-1)^r\in W_n(\oef)$. Then we have $v_p(x_0)=r/(p^{n-1}(p-1))$. By induction, it is enough to show that for $0\leq i\leq n-1$, if $(x_0,\ldots,x_i)(0,\ldots,0,y_i)\in W_{i+1}(\beF)$, then $y_i\in m_F$ and $x(0,\ldots,0,y_i,0,\ldots,0)\in W_n(\beF)$. By assumption, we have
\[
v_p(y_i)>\frac{r}{p-1}(1-\frac{1}{p^{n-i-1}})\geq 0.
\]
Put $(0,\ldots,0,w_i,\ldots,w_{n-1})=x(0,\ldots,0,y_i,0,\ldots,0)$. We show $w_l\in \beF$ for any $l$ by induction. Indeed, let us suppose that $w_l\in\beF$ for any $i\leq l \leq k-1$ with some $i+1\leq k\leq n-1$. We have the equality
\[
p^iy_{i}^{p^{k-i}}(x_{0}^{p^k}+px_{1}^{p^{k-1}}+\cdots+p^kx_k)=(p^iw_{i}^{p^{k-i}}+p^{i+1}w_{i+1}^{p^{k-i-1}}+\cdots+p^kw_k).
\] 
Since $r\geq 1$, we have $(p^{k-l}-1)r/(p-1)\geq k-l$ for $0\leq l\leq k-1$. This implies $v_p(p^lw_{l}^{p^{k-l}})> k+r/(p-1)$ for $0\leq l\leq k-1$. The valuation of the left-hand side of the above equality also satisfies this inequality. Thus we have $v_p(w_k)>r/(p-1)$ and the assertion follows. We can show the second assertion similarly.
\end{proof}

By this lemma, the natural surjections of rings
\begin{align*}
W_n(\oef)/&([\zeta_{p^n}]-1)^rW_n(m_F)\\
&\to W_n(\oef/p\oef)/([\zeta_{p^n}]-1)^rW_n(m_F/p\oef)\to \bar{A}_{n,F,r+}
\end{align*}
are isomorphisms. Then we see that the natural injection $F\to \kbar$ induces an injection of rings $\bar{A}_{n,F,r+}\to \bar{A}_{n,r+}$.

Write $Z_n$ for the image of the
element $Z$ of $\Acrys$ in $W_{n}^{\PD}(\tokbar)$. Then we have a commutative
diagram of $\Sigma$-algebras
\[
\xymatrix{
& \hat{A}_n \ar@{->>}[dd]\ar[r] & \Acrys/p^n\Acrys\ar[d]^{\wr} \\
& &W_{n}^{\PD}(\tokbar)\ar@{->>}[dd]\\
W_n(R)/(([\uep]-1)^{p-1})\ar[r]^-{\sim}\ar[d]_{\wr}& \hat{A}_n/(Z)\ar[rd] & \\
\bar{A}_{n,p-1}\ar[rr]\ar@{->>}[d] & &W_{n}^{\PD}(\tokbar)/(Z_n),\\
\bar{A}_{n,r+} & &
}
\]
where all the vertical arrows are surjections satisfying the condition of
Corollary \ref{quotlem}. Hence this is also a commutative diagram
in $'\Mod^{r,\phi}_{/\Sigma}$. Note that these rings and homomorphisms are independent of the
choice of a system $\{\zeta_{p^n}\}_{n\in\mathbb{Z}_{\geq 0}}$. We also note that $\Fil^r\bar{A}_{n,r+}=E([\pi_n])^r\bar{A}_{n,r+}$ and $\phi_r(E([\pi_n])^ry)=c^r\phi(y)$ for any $y\in\bar{A}_{n,r+}$, where $\phi$ denotes the Frobenius endomorphism of $\bar{A}_{n,r+}$ induced from that of the ring $W_n(\okbar/\bekbar)$. 
Moreover, let $M$ be an object of $\Mod^{r,\phi}_{/\Sigma_\infty}$. Then,
by Corollary \ref{quotlem} and
Corollary \ref{AnAcrys}, we have a natural
isomorphism of abelian groups
\[
\Hom_{\Sigma,\Fil^r,\phi_r}(M,W_{n}^{\PD}(\tokbar))\to\Hom_{\Sigma,\Fil^r,\phi_r}(M,\bar{A}_{n,r+}).
\]

Next we investigate the module on the right-hand side of this isomorphism, and prove this is in fact an isomorphism of $G_{F_n}$-modules. Consider the element $E([\pi_n])\in W_n(\oefn/p\oefn)$ and let us fix its lift $\hat{\gamma}\in W_n(\mathcal{O}_{F_n})$ by the natural surjection $W_n(\oefn)\to W_n(\oefn/p\oefn)$. Let $a\in W(R)^\times$ and $v=t/E([\upi])\in W(R)^\times$ as before. We let $a_n$, $t_n$ and $v_n$ denote the images of $a$, $t$ and $v$ by the surjection $W(R)\to W_n(\tokbar)$ induced by $\beta_n$, respectively. The elements $a_n$ and $t_n$ of the ring $W_n(\tokbar)$ are contained in the subring $W_n(\mathcal{O}_{F_n}/p\mathcal{O}_{F_n})$. We abusively let them also denote their images by the natural surjections $W_n(\tokbar)\to W_n(\okbar/\bekbar)\to \bar{A}_{n,r+}$. 

\begin{lem}\label{divgamma}
The element 
\[
\hat{t}_n=1+[\zeta_{p^{n+1}}]+[\zeta_{p^{n+1}}]^{2}+\cdots+[\zeta_{p^{n+1}}]^{p-1}=\frac{[\zeta_{p^n}]-1}{[\zeta_{p^{n+1}}]-1}
\]
is divisible by $\hat{\gamma}$ in the ring $W_n(\oefn)$. In particular, $\hat{\gamma}$ is a non-zero divisor of the ring $W_n(\okbar)$.
\end{lem}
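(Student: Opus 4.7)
The plan is to prove $\hat{\gamma} \mid \hat{t}_n$ in $W_n(\oefn)$ in two stages: first, a factorization modulo $p$ inherited from an exact identity in $W(R)$ (where the Teichmüller lift is multiplicative), and then a lifting step that promotes this congruence to a genuine divisibility using Lemma~\ref{Wittlem}. The non-zero-divisor assertion will then follow immediately by inspecting the zeroth Witt coordinate of $\hat{t}_n$.

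For the mod-$p$ step, I would work inside $W(R)$. The definitions give $v = t/E([\upi]) \in W(R)^{\times}$ and $t = 1 + [\uep^{1/p}] + \cdots + [\uep^{1/p}]^{p-1}$, whence $t = v \cdot E([\upi])$. Applying the ring map $\beta_n \colon W(R) \to W_n(\tokbar)$ (which sends $[\upi]$ and $[\uep^{1/p}]$ to the Teichmüller-like lifts $(\pi_n,0,\ldots,0)$ and $(\zeta_{p^{n+1}},0,\ldots,0)$), both factors land inside the subring $W_n(\oefn/p\oefn)$, yielding $\overline{\hat{t}_n} = \overline{\hat{\gamma}} \cdot \bar{v}_n$ in $W_n(\oefn/p\oefn)$ with $\bar{v}_n$ a unit. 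Thus any lift $\tilde{v}_n \in W_n(\oefn)$ of $\bar{v}_n$ satisfies $\hat{t}_n - \hat{\gamma}\tilde{v}_n \in W_n(p\oefn)$.

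The main obstacle is promoting this congruence to an actual divisibility. My approach is to first establish the auxiliary claim that $\hat{\gamma} \mid [\zeta_{p^n}]-1$ in $W_n(\oefn)$. Pushing the identity $[\uep]-1 = ([\uep^{1/p}]-1)\,v\,E([\upi])$ through $\beta_n$ and lifting gives $[\zeta_{p^n}]-1 = \hat{\gamma} w + p\epsilon$ for some $w,\epsilon \in W_n(\oefn)$. By Lemma~\ref{Wittlem}, $p\epsilon \in W_n(p\oefn) \subseteq (([\zeta_{p^n}]-1)^{p-1})$, so $p\epsilon = ([\zeta_{p^n}]-1)^{p-1}\eta'$ for some $\eta' \in W_n(\oefn)$, and rearranging yields
\[
([\zeta_{p^n}]-1)\bigl(1 - ([\zeta_{p^n}]-1)^{p-2}\eta'\bigr) = \hat{\gamma} w.
\]
For $p \geq 3$ the inner factor has zeroth Witt coordinate in $1 + m_{F_n}$ and is therefore a unit of $W_n(\oefn)$, giving $\hat{\gamma} \mid [\zeta_{p^n}]-1$. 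Raising to the $(p-1)$-th power and applying Lemma~\ref{Wittlem} once more shows $\hat{t}_n - \hat{\gamma}\tilde{v}_n \in (([\zeta_{p^n}]-1)^{p-1}) \subseteq (\hat{\gamma}^{p-1}) \subseteq (\hat{\gamma})$, so $\hat{\gamma} \mid \hat{t}_n$ as required.

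Finally, the zeroth Witt coordinate of $\hat{t}_n$ is $\sum_{i=0}^{p-1}\zeta_{p^{n+1}}^{i} = (\zeta_{p^n}-1)/(\zeta_{p^{n+1}}-1)$, which is nonzero in the domain $\okbar$. A Witt vector whose zeroth entry is a non-zero-divisor in the base ring is itself a non-zero-divisor (by induction on the truncation length and the polynomial formulas for Witt multiplication), so $\hat{t}_n$ is a non-zero-divisor in $W_n(\okbar)$, and hence so is its divisor $\hat{\gamma}$.
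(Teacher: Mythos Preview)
Your overall strategy is sound and close to the paper's, but there are two points that need repair.

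\textbf{The descent to $W_n(\oefn)$.} You assert that under $\beta_n$ the unit $v=t/E([\upi])\in W(R)^\times$ lands in the subring $W_n(\oefn/p\oefn)$, so that you may take $\tilde{v}_n$ (and later $w,\epsilon$) in $W_n(\oefn)$. But a priori $v_n$ only lies in $W_n(\tokbar)$; that it can be taken in $W_n(\oefn/p\oefn)$ is precisely what Lemma~\ref{Fn} extracts from Lemma~\ref{divgamma}, so invoking it here is circular. The paper avoids this by lifting $v_n$ to $\hat{v}_n\in W_n(\okbar)$ and carrying out the whole argument in $W_n(\okbar)$; divisibility in $W_n(\oefn)$ then follows by the Galois descent $W_n(\okbar)^{G_{F_n}}=W_n(\oefn)$ once $\hat\gamma$ is known to be a non-zero-divisor. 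With this change of ambient ring your auxiliary step ($\hat\gamma\mid[\zeta_{p^n}]-1$, hence $(\hat\gamma)\supseteq W_n(p\okbar)$, hence $\hat\gamma\mid\hat t_n$) goes through; it is a slightly longer detour than the paper's one-line trick of factoring $([\zeta_{p^n}]-1)^{p-1}=\hat t_n^{\,p-1}([\zeta_{p^{n+1}}]-1)^{p-1}$ to write $\hat t_n-\hat\gamma\hat v_n=\hat t_n\hat y$ directly, but it is correct.

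\textbf{The non-zero-divisor step.} The general principle you invoke---``a Witt vector whose zeroth entry is a non-zero-divisor is itself a non-zero-divisor''---is false. In $W_2(\mathbb{Z})$ the element $(p,-p^{p-1})$ has ghost components $(p,0)$ and annihilates every $(0,b)$. The right argument here is to pass to ghost components over the field $\kbar$: since $W_n(\okbar)\hookrightarrow W_n(\kbar)\cong\kbar^n$, it suffices that $w_m(\hat t_n)=\sum_{i=0}^{p-1}\zeta_{p^{n+1}}^{\,ip^m}\neq 0$ for $0\le m\le n-1$, which holds because $\zeta_{p^{n+1}}^{p^{m+1}}\neq 1$ in that range. (Equivalently, the paper has already recorded that $[\zeta_{p^n}]-1$ is a non-zero-divisor in $W_n(\okbar)$, and $\hat t_n$ divides it.)
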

\begin{proof}
It is enough to show the divisibility in the ring $W_n(\okbar)$.
Note that the element $t_n$ is also the image of $\hat{t}_n$ by the natural map $W_n(\oefn)\to W_n(\oefn/p\oefn)$. 
Let $\hat{v}_n$ be a lift of $v_n$ by the natural surjection $W_n(\okbar)\to W_n(\tokbar)$. Then we have $\hat{t}_n-\hat{\gamma}\hat{v}_n\in W_n(p\okbar)$. By Lemma \ref{Wittlem}, there exists $\hat{y}\in W_n(m_{\kbar})$ such that $\hat{t}_n-\hat{\gamma}\hat{v}_n=\hat{t}_n\hat{y}$. Hence we have $\hat{t}_n(1-\hat{y})=\hat{\gamma}\hat{v}_n$. Since $\hat{y}$ is topologically nilpotent in the ring $W_n(\okbar)$, the element $1-\hat{y}$ is invertible and the lemma follows.
\end{proof}

\begin{lem}\label{Fn}
The image of $Y\in\Sigma$ in the ring
 $\bar{A}_{n,r+}$ $(${\it resp.} $\bar{A}_{n,p-1}${}$)$ is contained in its subring
 $\bar{A}_{n,F_n,r+}$ $(${\it resp.} $W_n(\oefn/p\oefn)/(([\zeta_{p^n}]-1)^{p-1})${}$)$. 
\end{lem}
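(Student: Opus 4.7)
The plan is to compute the image of $Y$ in $\bar{A}_{n,p-1}$ using the formula already derived for its image in $W_n(R)/(([\uep]-1)^{p-1})$, and then use Lemma \ref{divgamma} to rewrite the factor $v_n^{-1}E([\pi_n])^{p-1}$ in terms of a unit $\mu_n$ of $W_n(\oefn/p\oefn)$. The statement for $\bar{A}_{n,r+}$ will drop out by composing with the natural surjection $\bar{A}_{n,p-1}\twoheadrightarrow\bar{A}_{n,r+}$, which carries $W_n(\oefn/p\oefn)/(([\zeta_{p^n}]-1)^{p-1})$ into $\bar{A}_{n,F_n,r+}$.

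First, using the formula $Y=-av^{-1}E([\upi])^{p-1}+v^{-p}Z$ in $\hat{A}$ and the fact that the composite $\Sigma\to W_n(R)/(([\uep]-1)^{p-1})\to\bar{A}_{n,p-1}$ kills $Z$, the image of $Y$ in $\bar{A}_{n,p-1}$ is the class of $-a_nv_n^{-1}E([\pi_n])^{p-1}$, where $a_n,v_n$ are the $\beta_n$-images of $a,v$. Since $a_n$ is a polynomial in $[\zeta_{p^{n+1}}]$ with integer coefficients and $E([\pi_n])^{p-1}$ is a polynomial in $[\pi_n]$ with coefficients in $W$, both factors already lie in $W_n(\oefn/p\oefn)$, so the only point is to handle $v_n^{-1}$.

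By Lemma \ref{divgamma} I would write $\hat{t}_n=\hat{\gamma}\hat{\mu}_n$ with $\hat{\mu}_n\in W_n(\oefn)$, and then upgrade $\hat{\mu}_n$ to a unit of $W_n(\oefn)$ as follows. The proof of Lemma \ref{divgamma} supplies the relation $\hat{t}_n(1-\hat{y})=\hat{\gamma}\hat{v}_n$ in $W_n(\okbar)$ for some lift $\hat{v}_n$ of $v_n$ and some $\hat{y}\in W_n(m_{\kbar})$; cancelling the non-zero divisor $\hat{\gamma}$ yields $\hat{\mu}_n(1-\hat{y})=\hat{v}_n$. Since $v\in W(R)^{\times}$, its image $v_n$ is a unit in $W_n(\tokbar)$, so the lift $\hat{v}_n$ is a unit of $W_n(\okbar)$; the element $1-\hat{y}$ is a unit of $W_n(\okbar)$ because its zeroth Witt coordinate lies in $1+m_{\kbar}$. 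Hence $\hat{\mu}_n\in W_n(\okbar)^{\times}$, and inspection of the zeroth Witt coordinate forces $\hat{\mu}_n\in W_n(\oefn)^{\times}$. Reducing modulo $p$ gives the identity $t_n=E([\pi_n])\mu_n$ in $W_n(\oefn/p\oefn)$ for a unit $\mu_n\in W_n(\oefn/p\oefn)^{\times}$.

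Combining this with $v_nE([\pi_n])=t_n$ in $W_n(\okbar/p\okbar)$ (the reduction mod $p$ of $vE([\upi])=t$ in $W(R)$), I deduce $(v_n-\mu_n)E([\pi_n])^{p-1}=0$; multiplying by the unit $v_n^{-1}\mu_n^{-1}$ of $W_n(\okbar/p\okbar)$ yields the clean identity
\[
v_n^{-1}E([\pi_n])^{p-1}=\mu_n^{-1}E([\pi_n])^{p-1}
\]
in $W_n(\okbar/p\okbar)$. Therefore the image of $Y$ in $\bar{A}_{n,p-1}$ equals the class of $-a_n\mu_n^{-1}E([\pi_n])^{p-1}\in W_n(\oefn/p\oefn)$, proving the $\bar{A}_{n,p-1}$ assertion; the $\bar{A}_{n,r+}$ assertion follows at once. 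The main subtlety is the upgrade from the divisibility of Lemma \ref{divgamma} to the unit property $\hat{\mu}_n\in W_n(\oefn)^{\times}$, together with the observation that although $v_n\neq\mu_n$ inside $\bar{A}_{n,p-1}$ in general, the weaker equation $v_nE([\pi_n])=\mu_nE([\pi_n])$ is exactly what is needed to land the image of $Y$ inside the subring.
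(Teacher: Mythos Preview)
Your proof is correct and follows essentially the same approach as the paper: both arguments use the formula $Y=-a_nv_n^{-1}E([\pi_n])^{p-1}$, observe that $a_n\in W_n(\oefn/p\oefn)$, and then produce via Lemma~\ref{divgamma} an invertible element of $W_n(\oefn/p\oefn)$ (your $\mu_n$, the paper's $v'_n$) satisfying $E([\pi_n])\mu_n=t_n$, from which $v_n^{-1}E([\pi_n])=\mu_n^{-1}E([\pi_n])$ follows. The only difference is that the paper asserts the invertibility of $v'_n$ directly (it follows from the zeroth Witt coordinate being a unit), whereas you derive it by lifting to $W_n(\oefn)$ and using the relation $\hat{\mu}_n(1-\hat{y})=\hat{v}_n$ extracted from the proof of Lemma~\ref{divgamma}; this is a bit more laborious but equally valid.
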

\begin{proof}
We have the equality
\[
E([\pi_n])v_n=t_n=1+[\zeta_{p^{n+1}}]+[\zeta_{p^{n+1}}]^2+\cdots+[\zeta_{p^{n+1}}]^{p-1}
\]
in the ring $W_n(\tokbar)$.
Note that any element
 $v'_n\in W_n(\tokbar)$ satisfying the same equality is
 invertible and thus the elements $(v'_n)^{-1}E([\pi_n])$ are equal
 to each other. Since $Y=-a_nv_{n}^{-1}E([\pi_n])^{p-1}$ in
the rings $\bar{A}_{n,r+}$ and $\bar{A}_{n,p-1}$, it suffices to construct an element
 $v'_n$ of the ring $W_n(\mathcal{O}_{F_n}/p\oefn)$ such
 that the equality $E([\pi_n])v'_n=t_n$ holds. This follows from Lemma \ref{divgamma}.
\end{proof}

From this lemma, we see that the natural $G_{F_n}$-actions on the rings
$\bar{A}_{n,p-1}$ and $\bar{A}_{n,r+}$ are compatible with the filtered $\phi_r$-module
structures over $\Sigma$. In the big commutative diagram above, the lowest
horizontal arrow and lower right vertical arrow are $G_{K}$-linear by
definition. Hence we have shown the following proposition.

\begin{prop}\label{comparison}
Let $M$ be an object of $\Mod^{r,\phi}_{/\Sigma_\infty}$. Then the map
\[
\Hom_{\Sigma,\Fil^r,\phi_r}(M,W_{n}^{\PD}(\tokbar))\to \Hom_{\Sigma,\Fil^r,\phi_r}(M,\bar{A}_{n,r+})
\]
is an isomorphism of $G_{F_n}$-modules.
\end{prop}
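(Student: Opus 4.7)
The plan is to observe that the essential ingredients for the proof are already in place from the preceding construction of the large commutative diagram of $\Sigma$-algebras and the lemmas that precede it; what still needs new verification is that the natural quotient map $W_{n}^{\PD}(\tokbar)\to\bar{A}_{n,r+}$ is $G_{F_n}$-equivariant for the $\Sigma$-algebra structures.

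First I would assemble the abelian group isomorphism. Starting from the isomorphism $W_{n}^{\PD}(\tokbar)\cong\Acrys/p^n\Acrys$ in $'\Mod^{r,\phi}_{/S}$, and hence in $'\Mod^{r,\phi}_{/\Sigma}$, Corollary \ref{AnAcrys} identifies $\Hom_{\Sigma,\Fil^r,\phi_r}(M,\Acrys/p^n\Acrys)$ with $\Hom_{\Sigma,\Fil^r,\phi_r}(M,\hat{A}_n)$. I would then trace the chain of surjections
\[
\hat{A}_n \twoheadrightarrow \hat{A}_n/(Z) \xleftarrow{\sim} W_n(R)/(([\uep]-1)^{p-1}) \xrightarrow{\sim} \bar{A}_{n,p-1} \twoheadrightarrow \bar{A}_{n,r+}
\]
appearing in the big diagram and apply Corollary \ref{quotlem} at each surjection: the relevant kernels lie in sufficiently high filtration that, since $r<p-1$, the map $\phi_r$ acts nilpotently on them modulo $p^n$, and thus the induced maps on the $\Hom$-modules are bijective. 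This produces the bijection as abelian groups.

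Next I would handle the $G_{F_n}$-equivariance. The rings $\hat{A}_n$, $W_{n}^{\PD}(\tokbar)$ and the quotients along the diagram all carry natural $G_K$-actions, and the lowermost horizontal and lower right vertical arrows of the big diagram are $G_K$-linear by construction, so they are in particular $G_{F_n}$-linear. It remains to check that the $\Sigma$-algebra structure on $\bar{A}_{n,r+}$ is compatible with the $G_{F_n}$-action. The image of $u$ in $\bar{A}_{n,r+}$ is $[\pi_n]$, which is $G_{F_n}$-fixed because $\pi_n\in F_n$; the image of $Y$ lies in the subring $\bar{A}_{n,F_n,r+}$ by Lemma \ref{Fn}, on which $G_{F_n}$ acts trivially. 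Hence the structural homomorphism $\Sigma\to\bar{A}_{n,r+}$ factors through $\bar{A}_{n,r+}^{G_{F_n}}$, and the identities $\Fil^r\bar{A}_{n,r+}=E([\pi_n])^r\bar{A}_{n,r+}$ and $\phi_r(E([\pi_n])^r y)=c^r\phi(y)$ then force both the filtration and the divided Frobenius to be preserved by $G_{F_n}$. The same observation, slightly simpler, applies to $W_{n}^{\PD}(\tokbar)$. Consequently, postcomposition by the $G_{F_n}$-equivariant quotient map transports $\Hom_{\Sigma,\Fil^r,\phi_r}(M,W_{n}^{\PD}(\tokbar))$ onto $\Hom_{\Sigma,\Fil^r,\phi_r}(M,\bar{A}_{n,r+})$ as $G_{F_n}$-modules.

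The delicate step is Lemma \ref{Fn}, since without it one cannot descend the $\Sigma$-structure to a $G_{F_n}$-stable subring; and that lemma rests on the divisibility $\hat{\gamma}\mid\hat{t}_n$ of Lemma \ref{divgamma}. Once this is in place, the remainder of the argument is a routine bookkeeping exercise, patching Galois actions through the successive quotients that already underlie the abelian-group bijection.
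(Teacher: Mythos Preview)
Your proposal is correct and follows essentially the same approach as the paper: the abelian-group bijection is assembled exactly as you describe from Corollary \ref{AnAcrys} and repeated applications of Corollary \ref{quotlem} along the big diagram, and the $G_{F_n}$-equivariance is deduced from Lemma \ref{Fn} together with the $G_K$-linearity of the indicated arrows. You have also correctly singled out Lemma \ref{Fn} (and its input Lemma \ref{divgamma}) as the one genuinely new ingredient.
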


Let $M$ be as in the proposition. Let $e_1,\ldots,e_d$ be a system of generators of $M$ as in Lemma \ref{adaptedlem} and 
$C=(c_{i,j})\in M_d(\Sigma)$ be a matrix representing
$\phi_r$ as in Corollary \ref{C}. Consider the surjection $\Sigma^{\oplus d}\to M$ defined by $(s_1,\ldots,s_d)\mapsto s_1e_1+\cdots+s_de_d$ and let $(s_{1,1},\ldots,s_{1,d}),\ldots,(s_{q,1},\ldots,s_{q,d})$ be a system of generators of its kernel. Then the underlying $G_{F_n}$-set of the $G_{F_n}$-module
\[
\Hom_{\Sigma,\Fil^r,\phi_r}(M,\bar{A}_{n,r+})
\] is
identified with the set of $d$-tuples $(\bar{x}_1,\ldots,\bar{x}_d)$ in
$\bar{A}_{n,r+}$ such that the following three conditions hold:
\begin{itemize}
\item 
$s_{l,1}\bar{x}_1+\cdots+s_{l,d}\bar{x}_d=0$ for any $l$,
\item
$c_{1,i}\bar{x}_1+\cdots+c_{d,i}\bar{x}_d\in\Fil^r\bar{A}_{n,r+}$ for any $i$,
\item the following
equality holds:
\begin{equation*}
\left\{
\begin{array}{c}
\phi_r(c_{1,1}\bar{x}_1+\cdots+c_{d,1}\bar{x}_d)=\bar{x}_1 \\
\vdots\\
\phi_r(c_{1,d}\bar{x}_1+\cdots+c_{d,d}\bar{x}_d)=\bar{x}_d.
\end{array}
\right.
\end{equation*}
\end{itemize}

We choose lifts
$\hat{c}$, $\hat{c}_{i,j}$ and $\hat{s}_{i,j}$ in $W_n(\mathcal{O}_{F_n})$ of the images of $c$, $c_{i,j}$ and $s_{i,j}$ in
$\bar{A}_{n,r+}$ by the natural ring homomorphism 
\[
W_n(\okbar)\to W_n(\tokbar)\to W_n(\okbar/\bekbar) \to \bar{A}_{n,r+},
\]
respectively. Recall that we have already chosen a lift $\hat{\gamma}\in W_n(\mathcal{O}_{F_n})$ of $E([\pi_n])\in W_n(\oefn/p\oefn)$. 

Fix a polynomial $\Phi_i \in \mathbb{Z}[X_0,\ldots,X_{n-1}]$ such that $\Phi_i\equiv X_{i}^{p}\text{ mod }p$. This induces for any commutative ring $B$ a map $\Phi=(\Phi_0,\ldots,\Phi_{n-1}):W_n(B)\to W_n(B)$ which is a lift of
the Frobenius endomorphism on $W_n(B/pB)$. 
In particular, set
$B$ to be the polynomial ring $\mathbb{Z}[X_0,\ldots,X_{n-1},Y_0,\ldots,Y_{n-1}]$. Put
$X=(X_0,\ldots,X_{n-1})$ and $Y=(Y_0,\ldots,Y_{n-1})$ in the ring
$W_n(B)$. Then we see that there exists
elements $U_0,\ldots,U_{n-1}$ and $U'_0,\ldots,U'_{n-1}$ of the
polynomial ring $B$
such that
\begin{align*}
\Phi(X+Y)&=\Phi(X)+\Phi(Y)+(pU_0,\ldots,pU_{n-1}),\\
\Phi(XY)&=\Phi(X)\Phi(Y)+(pU'_0,\ldots,pU'_{n-1})
\end{align*}
in the ring $W_n(B)$.

\begin{prop}\label{liftinglem}

Every $d$-tuple $(\bar{x}_1,\ldots,\bar{x}_d)$ in $\bar{A}_{n,r+}$ satisfying the above three conditions 
uniquely lifts to
 a $d$-tuple $(\hat{x}_1,\ldots,\hat{x}_d)$ in $W_n(\okbar)$ such that
\begin{itemize}
\item $\hat{s}_{l,1}\hat{x}_1+\cdots+\hat{s}_{l,d}\hat{x}_d\in ([\zeta_{p^n}]-1)^rW_n(m_{\kbar})$ for any $l$,
\item $\hat{c}_{1,i}\hat{x}_1+\cdots+\hat{c}_{d,i}\hat{x}_d\in
 \hat{\gamma}^rW_n(\okbar)$ for any $i$,
\item the following equality holds:
\begin{equation*}
\left\{
\begin{array}{c}
\hat{c}^r\Phi((\hat{c}_{1,1}\hat{x}_1+\cdots+\hat{c}_{d,1}\hat{x}_d)/\hat{\gamma}^r)=\hat{x}_1 \\
\vdots\\
\hat{c}^r\Phi((\hat{c}_{1,d}\hat{x}_1+\cdots+\hat{c}_{d,d}\hat{x}_d)/\hat{\gamma}^r)=\hat{x}_d.
\end{array}
\right.
\end{equation*}
\end{itemize}
\end{prop}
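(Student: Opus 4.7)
The plan is to construct the unique lift by a Frobenius fixed-point iteration on the set $\mathcal{X}$ of lifts of $(\bar{x}_1,\ldots,\bar{x}_d)$ in $W_n(\okbar)^d$, adapting the Hensel-style argument of Abrashkin (\cite{Ab}) to the present ramified Witt-vector setting.

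First I would show that the first two membership conditions are automatic. The kernel of the natural surjection $W_n(\okbar)\to \bar{A}_{n,r+}$ is $J:=([\zeta_{p^n}]-1)^rW_n(m_{\kbar})$, since Lemma \ref{Wittlem} absorbs the ideal $W_n(\bekbar)$. Writing $[\zeta_{p^n}]-1=\hat{t}_n\xi$ with $\xi:=[\zeta_{p^{n+1}}]-1$, and using Lemma \ref{divgamma} to identify $\hat{t}_n$ and $\hat{\gamma}$ as associates in $W_n(\okbar)$, we obtain $J\subseteq\hat{\gamma}^rW_n(\okbar)$. Consequently, for any lift $(\hat{y}_1,\ldots,\hat{y}_d)$ of $(\bar{x}_1,\ldots,\bar{x}_d)$, the element $\hat{s}_{l,1}\hat{y}_1+\cdots+\hat{s}_{l,d}\hat{y}_d$ lands in $J$ automatically, and $\hat{c}_{1,i}\hat{y}_1+\cdots+\hat{c}_{d,i}\hat{y}_d$ lands in $\hat{\gamma}^rW_n(\okbar)$ automatically. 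The set $\mathcal{X}$ is thus a $J^{\oplus d}$-torsor.

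Next I would set up the fixed-point iteration. Since $\hat{\gamma}$ is a non-zero divisor by Lemma \ref{divgamma}, the formula
\[
F(\hat{x})_i = \hat{c}^r\Phi\!\left(\frac{\hat{c}_{1,i}\hat{x}_1+\cdots+\hat{c}_{d,i}\hat{x}_d}{\hat{\gamma}^r}\right)
\]
defines a map on $\mathcal{X}$, and the third condition of the proposition says precisely that $\hat{x}$ is a fixed point of $F$. To see $F(\mathcal{X})\subseteq\mathcal{X}$ I would reduce modulo $J$: the map $\Phi$ descends to the Witt-vector Frobenius on $W_n(\tokbar)$ and hence on $\bar{A}_{n,r+}$, and the defining identity $\phi_r(E([\pi_n])^ry)=c^r\phi(y)$ for $\phi_r$ on $\bar{A}_{n,r+}$ matches the image of $F(\hat{y})_i$ with $\phi_r(\sum_jc_{j,i}\bar{x}_j)=\bar{x}_i$, the last equality being the third condition on $\bar{x}$.

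Finally, both existence and uniqueness come from a contraction estimate. For $\hat{y},\hat{y}'\in\mathcal{X}$ with $\hat{y}'-\hat{y}\in J^{\oplus d}$, the quantity $(\sum_j\hat{c}_{j,i}(\hat{y}'_j-\hat{y}_j))/\hat{\gamma}^r$ lies in $\xi^rW_n(m_{\kbar})$; applying the two identities
\begin{align*}
\Phi(X+Y) &= \Phi(X)+\Phi(Y)+(pU_0,\ldots,pU_{n-1}),\\
\Phi(XY) &= \Phi(X)\Phi(Y)+(pU'_0,\ldots,pU'_{n-1}),
\end{align*}
together with $\Phi(\xi)-\hat{t}_n\xi\in W_n(p\okbar)$ (which follows from $\Phi([\zeta_{p^{n+1}}])-[\zeta_{p^n}]\in W_n(p\okbar)$ since $\Phi$ is a Frobenius lift), and invoking Lemma \ref{Wittlem} in the form $W_n(p\okbar)\subseteq([\zeta_{p^n}]-1)^{p-1}W_n(\okbar)$ to absorb the $p$-corrections, one obtains $F(\hat{y}')-F(\hat{y})$ in a strictly smaller ideal $J_1\subsetneq J$. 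Iterating from any initial $\hat{y}^{(0)}\in\mathcal{X}$ produces a sequence $\hat{y}^{(k+1)}=F(\hat{y}^{(k)})$ whose successive differences lie in a decreasing filtration, which is separated on $W_n(\okbar)$ since $p^n=0$ and $\xi,\hat{t}_n$ are topologically nilpotent on each Witt level. The sequence therefore converges to a fixed point in $W_n(\okbar)^d$, and the same estimate forces any two fixed points in $\mathcal{X}$ to coincide.

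The main obstacle is this contraction estimate: one must carefully track how the Witt-vector Frobenius lift $\Phi$ --- a ring homomorphism only modulo $W_n(p\okbar)$, with explicit correction terms --- interacts with the ideal $([\zeta_{p^n}]-1)^rW_n(m_{\kbar})$ and its refinements by higher powers of $\xi$ and $\hat{t}_n$. The hypothesis $r<p-1$ enters essentially through Lemma \ref{Wittlem}: it is precisely what makes the $p$-corrections produced at each step fit inside the strictly contracting filtration, and what ensures the resulting limit lies in $W_n(\okbar)$ itself rather than merely in $W_n(\oc)$.
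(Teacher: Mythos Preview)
Your proposal is correct and shares the same core idea as the paper --- a Frobenius fixed-point argument exploiting that $\Phi$ raises Witt components to the $p$-th power modulo $p$-corrections, with $r<p-1$ ensuring those corrections are absorbed --- but the execution differs. The paper does not iterate $F$ directly on $W_n(\okbar)^d$; instead, after fixing an arbitrary lift $(\hat{x}_1,\ldots,\hat{x}_d)$ and writing the desired lift as $\hat{x}_i+([\zeta_{p^n}]-1)^r\hat{y}_i$ with $\hat{y}_i=(\hat{y}_{i,0},\ldots,\hat{y}_{i,n-1})\in W_n(m_{\kbar})$, it \emph{unpacks the Witt coordinates} $\hat{y}_{i,m}$ and rewrites the fixed-point equation as a polynomial system in the $nd$ scalar unknowns $\hat{y}_{i,m}$ over $\okbar$. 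This system visibly has the form of Lemma~\ref{recursion} (the ``degree-$\geq 2$'' part comes from $\Phi_m\equiv X_m^p$ and the ``$\epsilon P'$'' part from the $p$-corrections, whose coefficients land in $m_{\kbar}$ via Lemma~\ref{Wittlem}), and that lemma is applied in a sufficiently large \emph{finite} extension $N$ of $K$, which is a complete discrete valuation ring. Your direct Witt-vector iteration buys conceptual transparency and avoids introducing the auxiliary Lemma~\ref{recursion}, but the paper's unpacking buys a cleaner convergence argument: it sidesteps both the bookkeeping of your filtration $J\supsetneq J_1\supsetneq\cdots$ and the issue that $\okbar$ itself is not complete (your sequence $\hat{y}^{(k)}$ a priori converges only in $W_n(\oc)$, and one must then argue separately, e.g.\ by Galois descent or by observing the iteration stays in a fixed finite extension, that the limit lies in $W_n(\okbar)$).
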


\begin{proof}

Fix a lift $\hat{x}_i$ of $\bar{x}_i$ in $W_n(\okbar)$. Recall that the kernel of the surjection $W_n(\okbar)\to \bar{A}_{n,r+}$ is equal to the ideal $([\zeta_{p^n}]-1)^rW_n(m_{\kbar})$. The first condition in the proposition holds automatically for $(\hat{x}_1,\ldots,\hat{x}_d)$. By Lemma \ref{divgamma}, the element $\hat{c}_{1,i}\hat{x}_1+\cdots+\hat{c}_{d,i}\hat{x}_d$ is contained in $\hat{\gamma}^rW_n(\okbar)$ for any $i$. Since the map $\phi_r:\Fil^r\bar{A}_{n,r+}\to \bar{A}_{n,r+}$ satisfies $\phi_r(E([\pi_n])^r\bar{x})=c^r\phi(\bar{x})$ for any $\bar{x}\in\bar{A}_{n,r+}$, we have
\[
\left\{
\begin{array}{c}
\hat{c}^r\Phi((\hat{c}_{1,1}\hat{x}_1+\cdots+\hat{c}_{d,1}\hat{x}_d)/\hat{\gamma}^r)=\hat{x}_1+([\zeta_{p^n}]-1)^r\hat{\delta}_1 \\
\vdots\\
\hat{c}^r\Phi((\hat{c}_{1,d}\hat{x}_1+\cdots+\hat{c}_{d,d}\hat{x}_d)/\hat{\gamma}^r)=\hat{x}_d+([\zeta_{p^n}]-1)^r\hat{\delta}_d
\end{array}
\right.
\]
for some $\hat{\delta}_1,\ldots,\hat{\delta}_d\in W_n(m_{\kbar})$. It suffices to show that there
 exists a unique $d$-tuple $(\hat{y}_1,\ldots,\hat{y}_d)$ in
 $W_n(m_{\kbar})$ such that
\begin{align*}
\hat{c}^r\Phi((\hat{c}_{1,i}(\hat{x}_1+([\zeta_{p^n}]-1)^{r}\hat{y}_1)&+\cdots+\hat{c}_{d,i}(\hat{x}_d+([\zeta_{p^n}]-1)^{r}\hat{y}_d))/\hat{\gamma}^r)\\
&=\hat{x}_i+([\zeta_{p^n}]-1)^{r}\hat{y}_i
\end{align*}
for any $i$. For this, we need the following lemma.

\begin{lem}\label{recursion}
Let $N$ be a complete discrete valuation field and $m_N$ be the maximal ideal of
 $N$. Let $\epsilon_1,\ldots,\epsilon_d$ be
 in $m_N$. Let $P_1,\ldots,P_d$ and $P'_1\ldots,P'_d$ be
 elements of $\mathcal{O}_N[[Y_1,\ldots,Y_d]]$ such that 
$P_i\in (Y_1,\ldots,Y_d)^2$. Then the equation
\[
\left\{
\begin{array}{c}
Y_1-P_1(Y_1,\ldots,Y_d)- \epsilon_1P'_1(Y_1,\ldots,Y_d)=0\\
\vdots\\
Y_d-P_d(Y_1,\ldots,Y_d)- \epsilon_dP'_d(Y_1,\ldots,Y_d)=0
\end{array}
\right.
\]
has a unique solution in $m_N$.
\end{lem}
\begin{proof}
By assumption, we see that for any integer $l\geq 1$, a $d$-tuple
 $(y_1,\ldots,y_d)$ in $m_N/m_{N}^{l}$ satisfying the above equation
 lifts uniquely to a $d$-tuple in $m_N/m_{N}^{l+1}$ satisfying the same
 equation. Thus the lemma follows.
\end{proof}

Let us write as $\hat{y}_i=(\hat{y}_{i,0},\ldots,\hat{y}_{i,n-1})$. Since the image of $\Phi(([\zeta_{p^{n+1}}]-1)^r)$
 in $\bar{A}_{n,r+}$ is equal to $([\zeta_{p^n}]-1)^r$, we can find $\hat{b}\in W_n(\okbar)$
 such that
\[
\Phi(([\zeta_{p^n}]-1)^r/\hat{\gamma}^r)=([\zeta_{p^n}]-1)^r \hat{b}.
\]
Then there exists polynomials $U_{i,m}$ over $\okbar$ of the indeterminates 
$\underline{Y}=(Y_{i,m})_{1\leq i\leq d,0\leq m\leq n-1}$ such that the
 equation we have to solve is
\begin{align*}
\hat{x}_i+([\zeta_{p^n}]-1)^r\hat{y}_i&=\hat{x}_i+([\zeta_{p^n}]-1)^r\hat{\delta}_i\\
&\qquad
 +([\zeta_{p^n}]-1)^r\hat{b}\hat{c}^r(\Phi(\hat{c}_{1,i})\Phi(\hat{y}_1)+\cdots+\Phi(\hat{c}_{d,i})\Phi(\hat{y}_d))\\
&\qquad +(pU_{i,0}(\underline{\hat{y}}),\ldots,pU_{i,n-1}(\underline{\hat{y}}))
\end{align*}
for any $i$, where we put $\underline{\hat{y}}=(\hat{y}_{i,m})_{1\leq i\leq d,0\leq m\leq n-1}$. As in the proof of Lemma \ref{Wittlem}, we see that, for any elements
$P_0,\ldots,P_{n-1}$ of the polynomial ring $\okbar[\underline{Y}]$,
we can uniquely find elements $Q_0,\ldots,Q_{n-1}$ of this ring such that the
coefficients of these polynomials are in the maximal ideal $m_{\kbar}$
 and the equality
\[
(pP_0,\ldots,pP_{n-1})=([\zeta_{p^n}]-1)^r(Q_0,\ldots,Q_{n-1})
\]
holds in the ring of Witt vectors $W_n(\okbar[\underline{Y}])$. Therefore, this equation is equivalent to the equation
\begin{align*}
\hat{y}_i&=\hat{\delta}_i+\hat{b}\hat{c}^r(\Phi(\hat{c}_{1,i})\Phi(\hat{y}_1)+\cdots+\Phi(\hat{c}_{d,i})\Phi(\hat{y}_d))\\
&\qquad +(V_{i,0}(\underline{\hat{y}}),\ldots,V_{i,n-1}(\underline{\hat{y}})),
\end{align*}
where $V_{i,m}$ is a polynomial of $\underline{Y}$ over $\okbar$ whose
 coefficients are in the maximal ideal $m_{\kbar}$.
From the definition of $\Phi$, we see that $\underline{\hat{y}}=(\hat{y}_{i,m})_{i,m}$ is a
 solution of a system of equations
\[
Y_{i,m}-P_{i,m}(\underline{Y})-\epsilon_{i,m}P'_{i,m}(\underline{Y})=0
\]
satisfying the
 condition of Lemma \ref{recursion} for a sufficiently large
 finite extension $N$ of $K$. Then, by this lemma, we can solve the
 equation uniquely in $m_{\kbar}$.
\end{proof}

Let $F$ be an algebraic extension of $F_n$ in $\kbar$ and
consider the ring $\bar{A}_{n,F,r+}$.
By Lemma \ref{Fn}, we can consider this ring as a $\Sigma$-subalgebra of $\bar{A}_{n,r+}$. Put $\Fil^r\bar{A}_{n,F,r+}=E([\pi_n])^r\bar{A}_{n,F,r+}$. Then Lemma \ref{divgamma} implies that 
\[
\bar{A}_{n,F,r+}\cap\Fil^r\bar{A}_{n,r+}=\Fil^r\bar{A}_{n,F,r+}.
\]
Moreover, the Frobenius endomorphism $\phi$ of the ring $\bar{A}_{n,r+}$ preserves the subalgebra $\bar{A}_{n,F,r+}$ and thus $\phi_r:\Fil^r\bar{A}_{n,r+}\to\bar{A}_{n,r+}$ induces a $\phi$-semilinear map $\phi_r:\Fil^r\bar{A}_{n,F,r+}\to\bar{A}_{n,F,r+}$. Hence $\bar{A}_{n,F,r+}$ is a subobject of $\bar{A}_{n,r+}$ in the category $'\Mod^{r,\phi}_{/\Sigma}$. For
$M\in\Mod^{r,\phi}_{/\Sigma_\infty}$, let us set
\[
T_{\mathrm{crys},\pi_n,F}^{*}(M)=\Hom_{\Sigma,\Fil^r,\phi_r}(M,\bar{A}_{n,F,r+}).
\]
We see that 
\[
\bar{A}_{n,r+}=\bar{A}_{n,\kbar,r+}=\bigcup_{F/F_n} \bar{A}_{n,F,r+}
\]
in $'\Mod^{r,\phi}_{/\Sigma}$ and thus we have a natural identification
of abelian groups
\[
T_{\mathrm{crys},\pi_n,\kbar}^{*}(M)=\bigcup_{F/F_n}T_{\mathrm{crys},\pi_n,F}^{*}(M).
\]
The absolute Galois group $G_{F_n}$ acts on the abelian group on the left-hand side.

\begin{lem}\label{fix}

Let $F$ be an algebraic extension of $F_n$ in $\kbar$. Then the $G_F$-fixed part $T_{\mathrm{crys},\pi_n,\kbar}^{*}(M)^{G_F}$ is equal to
$T_{\mathrm{crys},\pi_n,F}^{*}(M)$.
\end{lem}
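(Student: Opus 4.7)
The plan is as follows. One inclusion, namely $T_{\mathrm{crys},\pi_n,F}^{*}(M) \subseteq T_{\mathrm{crys},\pi_n,\kbar}^{*}(M)^{G_F}$, is immediate from the definition once we observe that the $G_{F_n}$-action on $\bar{A}_{n,r+}$ preserves the subring $\bar{A}_{n,F,r+}$ (since $F \supseteq F_n$ is $G_F$-stable and the inclusion $\bar{A}_{n,F,r+}\to\bar{A}_{n,r+}$ is $G_F$-equivariant). Thus the real content is the reverse inclusion.

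For this, let $f\in T_{\mathrm{crys},\pi_n,\kbar}^{*}(M)^{G_F}$ and, following the description given right before Proposition \ref{liftinglem}, represent $f$ by a $d$-tuple $(\bar{x}_1,\ldots,\bar{x}_d)\in\bar{A}_{n,r+}^{\oplus d}$, which by assumption is fixed by $G_F$. The idea is to apply Proposition \ref{liftinglem} to produce the unique lift $(\hat{x}_1,\ldots,\hat{x}_d)\in W_n(\okbar)^{\oplus d}$ and then exploit the uniqueness to deduce Galois invariance.

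The crucial observation is that the lifts $\hat{c},\hat{c}_{i,j},\hat{s}_{i,j},\hat{\gamma}$ used in the equations of Proposition \ref{liftinglem} were chosen in $W_n(\mathcal{O}_{F_n})$, and are therefore fixed by $G_{F_n}$, hence \emph{a fortiori} by $G_F$. Consequently, for any $g\in G_F$, the tuple $(g(\hat{x}_1),\ldots,g(\hat{x}_d))$ is again a solution to exactly the same system of equations in Proposition \ref{liftinglem}, and it still lifts the same $G_F$-fixed $d$-tuple $(\bar{x}_1,\ldots,\bar{x}_d)$ via the surjection $W_n(\okbar)\to\bar{A}_{n,r+}$. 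By the uniqueness of the lift, $g(\hat{x}_i)=\hat{x}_i$ for every $i$, so each $\hat{x}_i$ lies in $W_n(\okbar)^{G_F}=W_n(\oef)$, where the equality uses that the Galois action on Witt vectors is componentwise and that $\okbar^{G_F}=\oef$.

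Finally, the canonical maps $W_n(\okbar)\to W_n(\tokbar)\to W_n(\okbar/\bekbar)\to\bar{A}_{n,r+}$ restrict to $W_n(\oef)\to W_n(\oef/\beF)\to\bar{A}_{n,F,r+}$ under the inclusion $\bar{A}_{n,F,r+}\hookrightarrow\bar{A}_{n,r+}$ established earlier. Therefore the image $\bar{x}_i$ of $\hat{x}_i$ lies in $\bar{A}_{n,F,r+}$ for every $i$, which means $f\in T_{\mathrm{crys},\pi_n,F}^{*}(M)$. The only step that requires some care is verifying that the coefficient data in the system of Proposition \ref{liftinglem} really is $G_F$-equivariant (so that the uniqueness argument applies); once this is in place the rest is formal.
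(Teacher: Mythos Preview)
Your proof is correct and follows essentially the same approach as the paper: both use the bijection of Proposition \ref{liftinglem} between elements of $T_{\mathrm{crys},\pi_n,\kbar}^{*}(M)$ and $d$-tuples in $W_n(\okbar)$, and both exploit the uniqueness of the lift (together with the fact that the defining data $\hat{c},\hat{c}_{i,j},\hat{s}_{i,j},\hat{\gamma}$ lie in $W_n(\mathcal{O}_{F_n})$) to conclude that a $G_F$-fixed tuple in $\bar{A}_{n,r+}$ lifts to a $G_F$-fixed tuple in $W_n(\okbar)$, hence lies in $W_n(\oef)$. Your write-up is simply a more expanded version of the paper's terse argument.
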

\begin{proof}
From Proposition \ref{liftinglem}, we see that the elements of $T_{\mathrm{crys},\pi_n,\kbar}^{*}(M)$
 correspond bijectively to the $d$-tuples in $W_n(\okbar)$ satisfying the three conditions in this proposition. The uniqueness assertion of the proposition shows that $g\in G_{F}$ fixes such a $d$-tuple in $W_n(\okbar)$ if and only if $g$ fixes its image in
 $\bar{A}_{n,r+}$. Hence an element of $T_{\mathrm{crys},\pi_n,\kbar}^{*}(M)$ is fixed by $G_F$ if and only if it is contained in the image of $W_n(\oef)$. Thus the lemma follows. 
\end{proof}

\begin{cor}\label{Fon}
Let $L_n$ be the finite Galois extension of $F_n$ corresponding to the
 kernel of the map 
\[
G_{F_n}\to\Aut(T_{\mathrm{crys},\pi_n,\kbar}^{*}(M)).
\]
Then an algebraic
 extension $F$ of $F_n$ in $\kbar$ contains $L_n$ if and
 only if 
\[
\# T_{\mathrm{crys},\pi_n,F}^{*}(M)=\# T_{\mathrm{crys},\pi_n,\kbar}^{*}(M).
\]
\end{cor}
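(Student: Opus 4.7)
The plan is to deduce this corollary essentially immediately from Lemma \ref{fix}, plus the finiteness of $T_{\mathrm{crys},\pi_n,\kbar}^{*}(M)$. First I would unwind the definition of $L_n$: since $L_n/F_n$ is the finite Galois extension cut out by the kernel of $G_{F_n}\to\Aut(T_{\mathrm{crys},\pi_n,\kbar}^{*}(M))$, for any algebraic extension $F/F_n$ the inclusion $F\supseteq L_n$ is equivalent to $G_F\subseteq G_{L_n}$, hence to the triviality of the $G_F$-action on $T_{\mathrm{crys},\pi_n,\kbar}^{*}(M)$, i.e.\ to the equality
\[
T_{\mathrm{crys},\pi_n,\kbar}^{*}(M)^{G_F}=T_{\mathrm{crys},\pi_n,\kbar}^{*}(M).
\]

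Next I would apply Lemma \ref{fix} to rewrite the left-hand side as $T_{\mathrm{crys},\pi_n,F}^{*}(M)$. The paragraph immediately preceding Lemma \ref{fix} has already identified $T_{\mathrm{crys},\pi_n,\kbar}^{*}(M)=\bigcup_{F'/F_n}T_{\mathrm{crys},\pi_n,F'}^{*}(M)$ as an increasing union arising from the $\Sigma$-algebra inclusions $\bar{A}_{n,F',r+}\hookrightarrow \bar{A}_{n,r+}$, so $T_{\mathrm{crys},\pi_n,F}^{*}(M)$ sits naturally as a subset of $T_{\mathrm{crys},\pi_n,\kbar}^{*}(M)$. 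Containment $F\supseteq L_n$ is thus equivalent to the equality of these two subsets of $T_{\mathrm{crys},\pi_n,\kbar}^{*}(M)$.

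To convert this set-theoretic equality into the cardinality equality of the corollary, one only needs the finiteness of $T_{\mathrm{crys},\pi_n,\kbar}^{*}(M)$. That follows from Proposition \ref{comparison}, which identifies this set with $\Hom_{\Sigma,\Fil^r,\phi_r}(M,W_{n}^{\PD}(\tokbar))$, together with Lemma \ref{cardinality} and a short d\'evissage along a composition series of $M$ with $p$-torsion graded pieces in $\Mod^{r,\phi}_{/\Sigma_1}$. Since both sides are then finite and one is contained in the other, equality of cardinalities coincides with equality as subsets. There is no real obstacle here: Corollary \ref{Fon} is a bookkeeping consequence of Lemma \ref{fix} and finiteness, and the work has already been done in proving Lemma \ref{fix} via the uniqueness in Proposition \ref{liftinglem}.
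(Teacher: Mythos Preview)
Your proof is correct and follows essentially the same approach as the paper: both reduce the statement to Lemma \ref{fix} by observing that $F\supseteq L_n$ is equivalent to the triviality of the $G_F$-action, hence to $T_{\mathrm{crys},\pi_n,F}^{*}(M)=T_{\mathrm{crys},\pi_n,\kbar}^{*}(M)$ as subsets. The paper leaves the passage from set equality to cardinality equality implicit, whereas you spell out the finiteness argument via Proposition \ref{comparison}, Lemma \ref{cardinality}, and d\'evissage; this is a harmless elaboration rather than a different route.
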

\begin{proof}
An algebraic extension $F$ of $F_n$ contains $L_n$ if and only if the
 action of $G_F$ on $T_{\mathrm{crys},\pi_n,\kbar}^{*}(M)$ is trivial. By Lemma \ref{fix}, this
 is equivalent to $T_{\mathrm{crys},\pi_n,F}^{*}(M)=T_{\mathrm{crys},\pi_n,\kbar}^{*}(M)$. 
\end{proof}

%---------------------------------------------------------------------

%---------------------------------------------------------------------

\section{Ramification bound}

In this section, we prove Theorem \ref{main_br}. Let $\cM$ be an object of $\Mod^{r,\phi,N}_{/S_\infty}$ which is killed by $p^n$ and let $L$ be the finite
Galois extension of $K$
corresponding to the kernel of the map 
\[
G_K\to\Aut(\Tsta(\cM)).
\]
Then the theorem is equivalent to the inequality $u_{L/K}\leq u(K,r,n)$, where $u_{L/K}$ denotes the greatest upper ramification break of the Galois extension $L/K$ (\cite{F}). For $r=0$, the $G_K$-module $\Tsta(\cM)$ is unramified and the assertion is trivial. Thus we may assume $p\geq 3$ and $r\geq 1$.

Let $L_n$ be the finite
Galois extension of $F_n$
corresponding to the kernel of the map 
\[
G_{F_n}\to\Aut(\Tsta(\cM)).
\]
Since $F_n$ is Galois over $K$, the extension $L_n=LF_n$ is also a Galois extension of $K$. Let $M\in\Mod^{r,\phi}_{/\Sigma_\infty}$ be the filtered $\phi_r$-module over $\Sigma$ which corresponds to $\cM$ by the equivalence $\cM_{\Sigma_\infty}$ of Proposition \ref{SSigeq}. Then Proposition \ref{vsTcrys} and Proposition \ref{comparison} show that $L_n$ is also the finite extension of $F_n$ cut
out by the $G_{F_n}$-module $T^{*}_{\mathrm{crys},\pi_n,\kbar}(M)$. It is enough to prove the inequality $u_{L_n/K}\leq u(K,r,n)$.

Before proving this, we state some general lemmas to calculate the
ramification bound. Let $N$ be a complete discrete valuation field of positive residue characteristic, $v_N$ be its valuation
 normalized as $v_N(N^\times)=\mathbb{Z}$
 and $N^{\mathrm{sep}}$ be its separable closure. We extend $v_N$ to any algebraic closure of $N$.

\begin{lem}\label{AbbesSaito}
Let $f(T)\in\oen[T]$
 be a separable monic polynomial and $z_1,\ldots,z_d$ be the
 zeros of $f$ in $\mathcal{O}_{N^{\mathrm{sep}}}$. Suppose that the set
 $\{v_N(z_k-z_i)\mid k=1,\ldots,d, k\neq i\}$ is
 independent of $i$. Put
\[
s_f=\sum_{\substack{
k=1,\ldots,d\\
k\neq i}}
v_N(z_k-z_i)\ \text{and}\
 \alpha_f=\sup_{\substack{
k=1,\ldots,d\\
k\neq i}}
v_N(z_k-z_i),
\]
which are independent of $i$ by assumption.
If $j>s_f+\alpha_f$, then we have the decomposition
\[
\{x\in\mathcal{O}_{N^{\mathrm{sep}}}\mid v_N(f(x))\geq j\}=\coprod_{i=1,\ldots,d}\{x\in\mathcal{O}_{N^{\mathrm{sep}}}\mid v_N(x-z_i)\geq j-s_f\}.
\]
Otherwise, the set on the left-hand side contains
\[
\{x\in\mathcal{O}_{N^{\mathrm{sep}}}\mid v_N(x-z_i)\geq \alpha_f\},
\]
which contains at least two zeros of $f$.
\end{lem}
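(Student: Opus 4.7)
The plan is to expand $v_N(f(x)) = \sum_{k=1}^{d} v_N(x - z_k)$ using the factorization of $f$ over $\mathcal{O}_{N^{\mathrm{sep}}}$, and to control each summand via the ultrametric identity $x - z_k = (x - z_i) + (z_i - z_k)$ for a well-chosen index $i$. Concretely, given $x$, I would pick $i$ so that $v_N(x - z_i)$ is maximal, and split on whether this maximum exceeds $\alpha_f$.

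For the first assertion, suppose $v_N(f(x)) \geq j > s_f + \alpha_f$. If $v_N(x - z_i) \leq \alpha_f$, the maximality of $i$ combined with the ultrametric inequality yields $v_N(x - z_k) \leq \min(v_N(x - z_i), v_N(z_i - z_k))$ for each $k \neq i$, so that $v_N(f(x)) \leq \alpha_f + s_f$, contradicting the hypothesis. Hence $v_N(x - z_i) > \alpha_f \geq v_N(z_i - z_k)$ for every $k \neq i$, and the ultrametric upgrades to an equality $v_N(x - z_k) = v_N(z_i - z_k)$. Summing gives $v_N(f(x)) = v_N(x - z_i) + s_f$, so $v_N(x - z_i) \geq j - s_f$. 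The reverse inclusion is an immediate application of the same equality. Disjointness of the union then follows because, for $i \neq i'$, the inequality $v_N(z_i - z_{i'}) \leq \alpha_f < j - s_f$ rules out an element with $v_N(x - z_i) \geq j - s_f$ and $v_N(x - z_{i'}) \geq j - s_f$ simultaneously (apply the ultrametric to $z_i - z_{i'} = (x - z_{i'}) - (x - z_i)$).

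For the second assertion, the hypothesis that the multiset $\{v_N(z_k - z_i) : k \neq i\}$ is independent of $i$ ensures that $\alpha_f$ is realized, so in particular there is some $k \neq i$ with $v_N(z_i - z_k) = \alpha_f$; thus $\{x : v_N(x - z_i) \geq \alpha_f\}$ already contains two distinct zeros of $f$. For any $x$ in this disk, the ultrametric inequality gives $v_N(x - z_k) \geq \min(\alpha_f, v_N(z_i - z_k)) = v_N(z_i - z_k)$ for each $k \neq i$, so $v_N(f(x)) \geq \alpha_f + s_f \geq j$, as desired.

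The only ingredient is a careful application of the non-archimedean triangle inequality together with the elementary bookkeeping above; I do not expect any genuine obstacle. The one spot that requires a small amount of attention is making the case analysis at the maximizing index $i$ airtight, but the hypothesis on the multiset $\{v_N(z_k - z_i)\}$ is precisely what makes both $s_f$ and $\alpha_f$ intrinsic (independent of $i$), which lets the two directions of Claim 1 match symmetrically across the disks around different $z_i$'s.
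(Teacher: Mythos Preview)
Your argument is correct. The paper does not actually give a proof of this lemma: it simply writes ``A verbatim argument in the proof of \cite[Lemma 6.6]{AS1} shows the claim.'' Your proposal is precisely the standard ultrametric computation that underlies that cited lemma, so you are essentially supplying the details the paper defers to Abbes--Saito.
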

\begin{proof}
A verbatim argument in the proof of \cite[Lemma 6.6]{AS1} shows the claim.
\end{proof}

\begin{cor}\label{ramCI}
Let $f(T)$ be as above and put $B=\oen[T]/(f(T))$. Let us
 write the $N$-algebra $N'=B\otimes_{\oen}N$ as the
 product $N_1\times\cdots\times N_t$ of finite separable extensions
 $N_1,\ldots,N_t$ of $N$. If $j>s_f+\alpha_f$, then the $j$-th upper
 numbering ramification group $($\cite{AS1}$)$, which we let be denoted by $G^{(j)}_{N}$, is contained in $G_{N_i}$ for any $i$. Moreover, if $N'$
 is a field and $B$ coincides with $\mathcal{O}_{N'}$, then $j>s_f+\alpha_f$
 if and only if $G_{N}^{(j)}\subseteq G_{N'}$.
\end{cor}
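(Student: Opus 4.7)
The plan is to translate Corollary \ref{ramCI} into the geometric language of Abbes-Saito \cite{AS1}. Recall that for a finite flat $\oen$-algebra $B$ presented as $\oen[T_1,\ldots,T_m]/(f_1,\ldots,f_k)$, one associates the rigid-analytic tubular neighbourhoods $X^j(B)$ cut out by $v_N(f_\ell)\geq j$, together with the set $F^j(B):=\pi_0(X^j(B)_{\bar N})$ of geometric connected components, which carries a natural $G_N$-action. The fundamental theorem of Abbes-Saito asserts that the natural surjection $\Hom_{\oen}(B,\mathcal{O}_{N^{\mathrm{sep}}})\twoheadrightarrow F^j(B)$ identifies $F^j(B)$ with the set of $G^{(j)}_N$-orbits. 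Consequently, when $B=\mathcal{O}_{N'}$ for a finite separable extension $N'/N$, the containment $G^{(j)}_N\subseteq G_{N'}$ is equivalent to the map $\Hom_N(N',\bar N)\to F^j(B)$ being a bijection.

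For our specific $B=\oen[T]/(f(T))$ the tubular neighbourhood simplifies to $X^j=\{x\in\mathcal{O}_{N^{\mathrm{sep}}}\mid v_N(f(x))\geq j\}$ and $\Hom_{\oen}(B,\mathcal{O}_{N^{\mathrm{sep}}})=\{z_1,\ldots,z_d\}$. Under the hypothesis $j>s_f+\alpha_f$, Lemma \ref{AbbesSaito} yields a decomposition $X^j=\coprod_{i=1}^{d}D_i$ where each $D_i=\{x\mid v_N(x-z_i)\geq j-s_f\}$ is a disk, hence geometrically connected, and contains the single zero $z_i$. Therefore $\{z_1,\ldots,z_d\}\to F^j(B)$ is a bijection, so $G^{(j)}_N$ acts trivially on the set of zeros. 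Decomposing $\Hom_N(N',\bar N)=\coprod_{i=1}^{t}\Hom_N(N_i,\bar N)$ into $G_N$-transitive pieces with stabilisers $G_{N_i}$ at chosen embeddings, the triviality of the $G^{(j)}_N$-action and the normality of $G^{(j)}_N$ in $G_N$ force $G^{(j)}_N\subseteq G_{N_i}$ for every $i$, giving the first assertion.

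For the second assertion, assume further that $N'$ is a field and $B=\mathcal{O}_{N'}$. The first assertion applied with $t=1$ already gives one implication. For the other, suppose $j\leq s_f+\alpha_f$; then Lemma \ref{AbbesSaito} shows that the disk $\{x\mid v_N(x-z_i)\geq\alpha_f\}$ is contained in $X^j$ and encloses at least two distinct zeros, so two distinct zeros lie in the same connected component of $X^j$, i.e.\ the map $\{z_1,\ldots,z_d\}\to F^j(B)$ is not injective. By the Abbes-Saito identification of the first paragraph, this forces $G^{(j)}_N\not\subseteq G_{N'}$, and combining with the previous direction yields the equivalence $j>s_f+\alpha_f\iff G^{(j)}_N\subseteq G_{N'}$. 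The only real subtlety is the invocation of the Abbes-Saito identification of $F^j(B)$ as $\Hom_{\oen}(B,\mathcal{O}_{N^{\mathrm{sep}}})/G^{(j)}_N$; once this is admitted, the remainder is a direct combinatorial consequence of the explicit computation of tubular neighbourhoods in Lemma \ref{AbbesSaito}.
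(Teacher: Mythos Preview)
Your argument follows the same route as the paper's: both compute the tubular neighbourhood via Lemma~\ref{AbbesSaito} and feed this into the Abbes--Saito machinery. The paper phrases the conclusion through the conductor $c(B)$ of \cite[Proposition~6.4]{AS1}, obtaining $c(B)=s_f+\alpha_f$ from the lemma and then invoking functoriality of $\mathcal{F}^j$ to get $c(\mathcal{O}_{N_1}\times\cdots\times\mathcal{O}_{N_t})\leq c(B)$, which is exactly the statement $G^{(j)}_N\subseteq G_{N_i}$ for $j>s_f+\alpha_f$.

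There is one point where your write-up needs tightening, and it is precisely the one you flag as ``the only real subtlety''. The identification $F^j(B)\cong\Hom_{\oen}(B,\mathcal{O}_{N^{\mathrm{sep}}})/G^{(j)}_N$ is established by Abbes--Saito for $B$ a (product of) rings of integers of finite separable extensions; it is \emph{not} what their fundamental theorem says for an arbitrary finite flat l.c.i.\ algebra such as $\oen[T]/(f)$, which need not be normal. So in your second paragraph the step ``$\{z_1,\ldots,z_d\}\to F^j(B)$ is a bijection, so $G^{(j)}_N$ acts trivially on the set of zeros'' is not yet justified when $B\neq\prod_i\mathcal{O}_{N_i}$. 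What fills this gap is the functoriality of $\mathcal{F}^j$ applied to the normalization map $B\hookrightarrow\prod_i\mathcal{O}_{N_i}$: the resulting commutative square with $\Hom(\,\cdot\,,\mathcal{O}_{N^{\mathrm{sep}}})$ on top shows that bijectivity for $B$ forces bijectivity for $\prod_i\mathcal{O}_{N_i}$, where Abbes--Saito's theorem does apply. This is exactly the paper's conductor inequality, stated in different language. For the ``Moreover'' clause your argument is complete as written, since there $B=\mathcal{O}_{N'}$ and the identification is available directly.
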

\begin{proof}
Note that the algebra
 $B$ is finite flat and of relative complete intersection over
 $\oen$. By the previous lemma, the conductor $c(B)$ of the $\oen$-algebra
 $B$ (\cite[Proposition 6.4]{AS1}) is equal to
 $s_f+\alpha_f$. Thus we have the inequality
\[
c(\mathcal{O}_{N_1}\times\cdots\times\mathcal{O}_{N_t})\leq c(B)=s_f+\alpha_f
\]
by the definition of the conductor and a functoriality of the functor
 $\mathcal{F}^j$ defined in \cite{AS1}. This implies the corollary.
\end{proof}

\begin{cor}\label{ramlemz}
We have the inequality
\[
u_{K(\zeta_{p^{n+1}})/K}\leq 1-\frac{1}{e(K(\zeta_p)/K)}+e(n+\frac{1}{p-1}),
\]
where $e(K(\zeta_p)/K)$ denotes the relative ramification index of $K(\zeta_p)$ over $K$.
\end{cor}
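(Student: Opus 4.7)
I factor the extension $K(\zeta_{p^{n+1}})/K$ through $L = K(\zeta_p)$ and combine Corollary \ref{ramCI} applied to a polynomial over $\oel$ with a Herbrand transfer through the tame subextension $L/K$ of degree $e_0 := e(L/K)$. Note that $e_0 e \ge p - 1$, since $L$ contains $\bQ_p(\zeta_p)$ whose ramification index over $\bQ_p$ is $p-1$, so $(p-1)\mid e(L/\bQ_p) = e_0 e$. The case $n = 0$ reduces to the tame bound $u_{L/K}\le 1$, obtained directly from Corollary \ref{ramCI} applied to the minimal polynomial of a uniformizer of $L$ over $\okey$; this is dominated by $1-1/e_0+e/(p-1)$ because $e_0 e \ge p-1$.

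\textbf{Step 1 (for $n\ge 1$).} I apply Corollary \ref{ramCI} with $N=L$ to the monic separable polynomial
\[
f(T) = T^{p^n} - \zeta_p \in \oel[T],
\]
whose roots in $\kbar$ are $\{\zeta_{p^n}^i\,\zeta_{p^{n+1}}\}_{i\in\bZ/p^n\bZ}$, and such that every factor of the $L$-algebra $B\otimes_{\oel}L$ is isomorphic to $K(\zeta_{p^{n+1}})$. The root differences factor as $\zeta_{p^{n+1}}\zeta_{p^n}^j(\zeta_{p^n}^{i-j}-1)$, so their $v_L$-valuations depend only on $v_p(i-j)$, and the hypothesis of Lemma \ref{AbbesSaito} follows. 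The cyclotomic formula $v_L(\zeta_{p^m}-1) = e_0 e /(p^{m-1}(p-1))$ then yields
\[
s_f = \sum_{k=0}^{n-1} p^{n-k-1}(p-1)\cdot\frac{e_0 e}{p^{n-k-1}(p-1)} = n\,e_0 e,
\qquad
\alpha_f = \frac{e_0 e}{p-1},
\]
so the corollary gives $u_{K(\zeta_{p^{n+1}})/L} \le e_0 e \bigl(n+\tfrac{1}{p-1}\bigr)$.

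\textbf{Step 2 (Herbrand transfer).} Writing $M = K(\zeta_{p^{n+1}})$, $G = \Gal(M/K)$, $H = \Gal(M/L)$: since $L/K$ is totally tame of degree $e_0$ and $M/L$ is totally wild of $p$-power degree, $H$ is the wild inertia of $G$, and the lower ramification subgroups of $G$ and $H$ coincide from level $1$ onward. Herbrand's $\phi$-function then satisfies $\phi_{M/K}(v) = \phi_{M/L}(v)/e_0$ for all $v\ge 0$, and after translating from Serre's upper numbering into the convention $G^{(j)} = G^{j-1}$ used in this paper, one obtains
\[
u_{M/K} = \frac{u_{M/L}-1}{e_0} + 1,
\]
valid because $u_{M/L}\ge 1$ for $n\ge 1$. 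Substituting the Step 1 bound produces the desired inequality
\[
u_{M/K} \le 1 - \frac{1}{e_0} + e\!\left(n + \frac{1}{p-1}\right).
\]

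I expect the main obstacle to be the careful derivation of this tower transfer formula in the paper's convention: the shift by $1$ between Serre's and Fontaine's upper numbering interacts nontrivially with the division by $e_0$, and care is needed to confirm that the Step 1 bound is at least $1$ (automatic from $e_0 e \ge p-1$), so that the wild-level formula applies rather than the tame contribution $u_{L/K}\le 1$.
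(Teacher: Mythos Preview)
Your proof is correct and follows essentially the same route as the paper: factor through $L=K(\zeta_p)$, apply Corollary \ref{ramCI} to $f(T)=T^{p^n}-\zeta_p$ over $\oel$ to bound $u_{K(\zeta_{p^{n+1}})/L}$ by $e_0 e\,(n+1/(p-1))$, and then transfer through the tame layer $L/K$ via the Herbrand function. The paper compresses your Step~2 into the single sentence ``since the Herbrand function is transitive and $K(\zeta_p)$ is tamely ramified over $K$, it is enough to show $u_{K(\zeta_{p^{n+1}})/K(\zeta_p)}\le e(K(\zeta_p))(n+1/(p-1))$,'' whereas you spell out the formula $u_{M/K}=(u_{M/L}-1)/e_0+1$ and handle $n=0$ separately; but the underlying argument is identical.
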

\begin{proof}
Since the Herbrand function is transitive and the finite extension $K(\zeta_p)$ is tamely ramified over $K$, it is enough to show the inequality
\[
u_{K(\zeta_{p^{n+1}})/K(\zeta_p)}\leq e(K(\zeta_p))(n+\frac{1}{p-1}).
\]
Put $N=K(\zeta_p)$ and $f(T)=T^{p^n}-\zeta_p$. These satisfy the assumptions of Corollary \ref{ramCI}. We have $s_f=ne(K(\zeta_p))$ and $\alpha_f=e(K(\zeta_p))/(p-1)$ in this case. Hence the corollary follows.
\end{proof}

\begin{cor}\label{ramlem}
Consider the finite Galois extension $F_n=K_n(\zeta_{p^{n+1}})$ of $K$. Then we have the equality
\[
u_{F_n/K}=1+e(n+\frac{1}{p-1}).
\] 
\end{cor}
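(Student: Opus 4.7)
The plan is to exploit the compositum structure $F_n = K_n \cdot K(\zeta_{p^{n+1}})$, which gives the equality $G_{F_n} = G_{K_n} \cap G_{K(\zeta_{p^{n+1}})}$ inside $G_K$. Consequently, for any $j$ the containment $G_K^{(j)} \subseteq G_{F_n}$ holds if and only if both $G_K^{(j)} \subseteq G_{K_n}$ and $G_K^{(j)} \subseteq G_{K(\zeta_{p^{n+1}})}$, and therefore
\[
u_{F_n/K} = \max\bigl(u_{K_n/K},\, u_{K(\zeta_{p^{n+1}})/K}\bigr),
\]
where for a (not necessarily Galois) finite extension $M/K$ I write $u_{M/K}$ for the largest $j$ with $G_K^{(j)} \not\subseteq G_M$. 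The aim is to compute the first term exactly and to bound the second by something strictly smaller.

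For the first term I will apply the equivalence clause of Corollary \ref{ramCI} to the Eisenstein polynomial $f(T) = T^{p^n} - \pi \in \okey[T]$, for which $\okey[T]/(f(T)) = \okey_{K_n}$ and $K_n$ is a field. The roots $z_i = \zeta_{p^n}^i \pi_n$ satisfy $v_K(z_k - z_i) = p^{-n} + v_K(\zeta_{p^n}^{k-i} - 1)$, which depends only on $k-i \pmod{p^n}$, so the hypothesis of Lemma \ref{AbbesSaito} is satisfied. A direct computation, grouping the summation according to $l = v_p(k)$ and using $v_K(\zeta_{p^{n-l}} - 1) = e/(p^{n-l-1}(p-1))$, shows that each level $l \in \{0,\ldots,n-1\}$ contributes $e$ after telescoping, giving $s_f = 1 - p^{-n} + ne$ and $\alpha_f = p^{-n} + e/(p-1)$, whence
\[
s_f + \alpha_f = 1 + e\bigl(n + \tfrac{1}{p-1}\bigr).
\]
The equivalence in Corollary \ref{ramCI} then gives $u_{K_n/K} = 1 + e(n + 1/(p-1))$.

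For the second term, Corollary \ref{ramlemz} directly yields
\[
u_{K(\zeta_{p^{n+1}})/K} \leq 1 - \frac{1}{e(K(\zeta_p)/K)} + e\bigl(n + \tfrac{1}{p-1}\bigr),
\]
which is strictly less than $1 + e(n + 1/(p-1))$ since $e(K(\zeta_p)/K) \geq 1$. Combining these two ingredients via the max formula above gives the desired equality $u_{F_n/K} = 1 + e(n + 1/(p-1))$.

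I do not anticipate any serious obstacle; the only nontrivial ingredient beyond the previously established corollaries is the $s_f + \alpha_f$ calculation, which is a routine telescoping sum over the $p$-adic valuations of the exponents of $\zeta_{p^n}$. The conceptual point is that $F_n$ is a compositum of two extensions, one of which (the wildly ramified Eisenstein tower $K_n/K$) contributes the dominant ramification, while the other (the cyclotomic tower) is dominated via Corollary \ref{ramlemz}.
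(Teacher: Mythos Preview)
Your proposal is correct and follows essentially the same route as the paper's proof: both apply Corollary~\ref{ramCI} to the Eisenstein polynomial $f(T)=T^{p^n}-\pi$ to pin down $u_{K_n/K}$ exactly, invoke Corollary~\ref{ramlemz} to bound the cyclotomic side, and combine via $G_{F_n}=G_{K_n}\cap G_{K(\zeta_{p^{n+1}})}$. The only cosmetic differences are that you phrase the combination as a $\max$ formula and write out the telescoping computation of $s_f$ and $\alpha_f$ explicitly, whereas the paper states the result of that computation without details.
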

\begin{proof}
Applying
 Corollary \ref{ramCI} to the Eisenstein polynomial $f(T)=T^{p^n}-\pi$ and $N=K$
 shows that $j>1+e(n+1/(p-1))$ if and only if $G_{K}^{(j)}\subseteq G_{K_n}$. From Corollary \ref{ramlemz}, we see that if $j>1+e(n+1/(p-1))$, then $G_{K}^{(j)}\subseteq G_{K(\zeta_{p^{n+1}})}$. Since 
$G_{F_n}=G_{K_n}\cap G_{K(\zeta_{p^{n+1}})}$, we conclude that
 $j>1+e(n+1/(p-1))$ if and only if $G_{K}^{(j)}\subseteq G_{F_n}$.
\end{proof}

\begin{rmk}\label{Tate}
Note that this argument also shows the equality
\[
u_{K_n(\zeta_{p^n})/K}=1+e(n+\frac{1}{p-1}).
\]
\end{rmk}

Next we assume that the residue field of $N$ is perfect. For an algebraic extension $F$ of
$N$, we put
\[
\mathfrak{a}_{F/N}^{j}=\{x\in\oef\mid v_N(x)\geq j\}.
\] 
Let $Q$ be a finite Galois extension of $N$ and consider the property
\[
(P_j)
\left\{
\begin{array}{l}
\text{for any algebraic extension $F$ of $N$, if there exists}\\
\text{an
 $\oen$-algebra homomorphism $\mathcal{O}_{Q}\to\oef/\mathfrak{a}_{F/N}^{j}$,}\\
\text{then there
 exists an $N$-algebra injection $Q\to F$}
\end{array}
\right.
\]
for $j\in\mathbb{R}_{\geq 0}$, as in \cite[Proposition 1.5]{F}.
Then we have the following proposition, which is due to Yoshida. Here we reproduce his proof for the convenience
of the reader.

\begin{prop}[\cite{Yos}]\label{Yos}
\[
u_{Q/N}=\inf\{j\in\mathbb{R}_{\geq 0}\mid \text{the property }(P_j)\text{ holds}\}.
\]
\end{prop}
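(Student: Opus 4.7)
The plan is to prove the two inequalities $u_{Q/N}\leq u_{0}$ and $u_{0}\leq u_{Q/N}$, where $u_{0}:=\inf\{j\in\mathbb{R}_{\geq 0}\mid (P_{j})\text{ holds}\}$. Since the residue field of $N$ is perfect we may choose $x\in\mathcal{O}_{Q}$ with $\mathcal{O}_{Q}=\mathcal{O}_{N}[x]$; let $f\in\mathcal{O}_{N}[T]$ be its minimal polynomial and $x_{1},\ldots,x_{d}$ its roots. Because $Q/N$ is Galois, all the $x_{i}$ lie in $\mathcal{O}_{Q}$ and the set $\{v_{N}(x_{k}-x_{i})\mid k\neq i\}$ is independent of $i$, so Lemma \ref{AbbesSaito} applies. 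Applying Corollary \ref{ramCI} to $B=\mathcal{O}_{N}[T]/(f)=\mathcal{O}_{Q}$ then identifies the ramification break as $u_{Q/N}=s_{f}+\alpha_{f}$.

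For $u_{0}\leq u_{Q/N}$, suppose $j>u_{Q/N}$ and that $\varphi:\mathcal{O}_{Q}\to\mathcal{O}_{F}/\mathfrak{a}^{j}_{F/N}$ is an $\mathcal{O}_{N}$-algebra morphism. Lift $\varphi(x)$ to some $y\in\mathcal{O}_{F}$, so $v_{N}(f(y))\geq j>s_{f}+\alpha_{f}$. Lemma \ref{AbbesSaito} produces an index $i$ with $v_{N}(y-x_{i})\geq j-s_{f}>\alpha_{f}\geq v_{N}(x_{i}-x_{k})$ for every $k\neq i$, and Krasner's lemma yields $x_{i}\in N(y)\subseteq F$. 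Since $Q=N(x_{i})$, this gives the desired $N$-algebra injection and $(P_{j})$ holds, so $u_{0}\leq u_{Q/N}$.

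For the converse $u_{Q/N}\leq u_{0}$, take $j>u_{0}$, set $H=G_{N}^{(j)}$ and $F=(\Ksep)^{H}$. By Fontaine's description of the upper numbering filtration (cf.~\cite{F}), $H$ acts trivially on $\okbar/\mathfrak{a}^{j}_{\kbar/N}$, so the composite $\mathcal{O}_{Q}\hookrightarrow\okbar\to\okbar/\mathfrak{a}^{j}_{\kbar/N}$ factors through the $H$-invariants. The natural injection $\mathcal{O}_{F}/\mathfrak{a}^{j}_{F/N}\hookrightarrow(\okbar/\mathfrak{a}^{j}_{\kbar/N})^{H}$ is to be shown surjective; granted this, we obtain an $\mathcal{O}_{N}$-algebra morphism $\varphi:\mathcal{O}_{Q}\to\mathcal{O}_{F}/\mathfrak{a}^{j}_{F/N}$. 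Property $(P_{j})$ then yields an $N$-algebra injection $Q\hookrightarrow F$; since $Q/N$ is Galois this forces $Q\subseteq F$, and hence $G_{N}^{(j)}=G_{F}\subseteq G_{Q}$ for every $j>u_{0}$, giving $u_{Q/N}\leq u_{0}$.

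The main obstacle is the surjectivity of $\mathcal{O}_{F}/\mathfrak{a}^{j}_{F/N}\to(\okbar/\mathfrak{a}^{j}_{\kbar/N})^{H}$, i.e.\ the claim that every $H$-invariant residue class admits an $H$-invariant lift in $\okbar$. This amounts to a continuous Galois cohomology vanishing for the ideal $\mathfrak{a}^{j}_{\kbar/N}$ under the action of $H$. The intended approach is to reduce to finite Galois levels $\tilde{Q}/N$ containing $Q$, where the image of $H$ in $\Gal(\tilde{Q}/N)$ is finite and the required invariant lift can be produced by an averaging argument (a Hilbert~90--type vanishing for the finite quotient), and then to promote the statement back to $\okbar$ by a compactness/limit argument; compatibility of the upper numbering with this passage is what makes the two sides of the equality match exactly at the cut-off $u_{Q/N}$.
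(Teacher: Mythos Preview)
Your first inequality $u_{0}\leq u_{Q/N}$ is fine; it is essentially Fontaine's \cite[Proposition 1.5 (i)]{F}, which the paper simply cites.

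The second inequality has a genuine error. The assertion that $H=G_{N}^{(j)}$ acts trivially on $\mathcal{O}_{\bar N}/\mathfrak{a}^{j}_{\bar N/N}$ is \emph{false} in general, and is not something one can read off from \cite{F}. For a concrete counterexample, take $N=\mathbb{Q}_{3}(\zeta_{3})$ and $Q=N(\pi_{N}^{1/3})$, a cyclic totally wildly ramified extension of degree $3$. One computes $u_{Q/N}=4$, but for the nontrivial $\sigma\in\Gal(Q/N)$ one has $v_{N}(\sigma(\pi_{N}^{1/3})-\pi_{N}^{1/3})=v_{N}((\zeta_{3}-1)\pi_{N}^{1/3})=4/3$. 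Thus any lift $g\in G_{N}^{(j)}$ of $\sigma$ (which exists for all $j\leq 4$) moves $\pi_{N}^{1/3}$ by an element of valuation $4/3$, so $g$ does \emph{not} act trivially on $\mathcal{O}_{\bar N}/\mathfrak{a}^{j}$ for any $j\in(4/3,4]$. Your argument needs this precisely in the range $(u_{0},u_{Q/N}]$ that you are trying to show is empty, so the reasoning is circular even before one reaches the cohomological ``main obstacle'' you flag.

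The paper proceeds quite differently. It quotes \cite[Proposition 1.5 (ii)]{F}, which already gives that $(P_{j})$ fails at $j=u_{Q/N}-e(Q/N)^{-1}$, leaving a gap of size $1/e(Q/N)$. To close it, one passes to $Q'=QN'$ with $N'=N(\pi_{N}^{1/e''})$ a tame extension of degree $e''$ prime to $p\,e(Q/N)$; then $u_{Q'/N}=u_{Q/N}$ while $e(Q'/N)=e''e(Q/N)$ is arbitrarily large. The failure of $(P_{j})$ for $Q'$ at $j=u_{Q/N}-1/(e''e(Q/N))$ is then transported back to a failure for $Q$ at the same $j$ (using that $N'\hookrightarrow F$ whenever $j>u_{N'/N}$, so a hypothetical embedding $Q\hookrightarrow F$ would force $Q'\hookrightarrow F$). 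Letting $e''\to\infty$ gives $u_{0}\geq u_{Q/N}$.
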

\begin{proof}
By \cite[Proposition 1.5 (i)]{F}, it is enough to show that the property $(P_j)$ does not hold for
 $j=u_{Q/N}-(e')^{-1}$ with an arbitrarily large $e'>0$. As in the proof
 of \cite[Proposition 1.5 (ii)]{F}, we may assume that $Q$ is totally and wildly ramified over $N$. Take an arbitrarily large integer $e''>0$ with $(e'',pe(Q/N))=1$. We may also assume that $N$ contains a primitive $e''$-th root of unity. Set $N'=N(\pi_{N}^{1/e''})$ and $Q'=QN'$. Note that we have $u_{Q'/N}=u_{Q/N}$ by assumption. From this
 proposition in \cite{F}, we see that for some algebraic extension $F$ of $N$, there exists an
 $\oen$-algebra homomorphism
 $\mathcal{O}_{Q'}\to\oef/\mathfrak{a}_{F/N}^{j}$ for
 $j=u_{Q/N}-e(Q'/N)^{-1}$ but no $N$-algebra injection $Q'\to F$. Since
 $Q/N$ is wildly ramified, we see that $e(Q/N)u_{Q/N}-1>e(Q/N)$. Hence
 we have $u_{Q/N}-e(Q'/N)^{-1}>1\geq u_{N'/N}$ and there exists an
 $N$-algebra injection $N'\to F$ also by this proposition. Thus there exists no $N$-algebra
 injection $Q\to F$ and the property $(P_j)$ for $Q/N$ does not
 hold. Since $e(Q'/N)=e''e(Q/N)$, the proposition follows.
\end{proof}

We see from Proposition \ref{Yos} that to bound the greatest upper
ramification break $u_{L_n/K}$, it is enough to show the following
proposition.
 
\begin{prop}\label{Pm'}
Let $F$ be an algebraic extension of $K$. If
 $j>u(K,r,n)$ and there exists an $\okey$-algebra homomorphism 
\[
\eta:\mathcal{O}_{L_n}\to\oef/\mathfrak{a}_{F/K}^{j},
\]
then there exists a $K$-algebra injection $L_n\to F$.
\end{prop}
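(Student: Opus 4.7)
The plan is to verify the criterion of Proposition \ref{Yos} for the Galois extension $L_n/K$ by combining it with the explicit Witt-vector description of $T^{*}_{\mathrm{crys},\pi_n,\kbar}(M)$ from Proposition \ref{liftinglem}, following Abrashkin's strategy.

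First I would restrict $\eta$ to the subring $\mathcal{O}_{F_n}\subseteq \mathcal{O}_{L_n}$ to obtain an $\okey$-algebra map $\mathcal{O}_{F_n}\to \oef/\mathfrak{a}^{j}_{F/K}$. By Corollary \ref{ramlem} we have $u_{F_n/K}=1+e(n+1/(p-1))\leq u(K,r,n)<j$, so Proposition \ref{Yos} supplies a $K$-algebra injection $F_n\hookrightarrow F$. Since $F_n/K$ is Galois, we may replace $\eta$ by a $\Gal(F_n/K)$-conjugate to arrange that this injection is compatible with $\eta$ modulo $\mathfrak{a}^{j}_{F/K}$; in particular we may assume $F_n\subseteq F$ and that $\eta|_{\mathcal{O}_{F_n}}$ is the canonical composition $\mathcal{O}_{F_n}\hookrightarrow\oef\twoheadrightarrow\oef/\mathfrak{a}^{j}_{F/K}$.

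Next, by Corollary \ref{Fon} it suffices to prove the cardinality equality $\#T^{*}_{\mathrm{crys},\pi_n,F}(M)=\#T^{*}_{\mathrm{crys},\pi_n,\kbar}(M)$; by Lemma \ref{fix} and the fact that $L_n$ cuts out the $G_{F_n}$-action on $T^{*}_{\mathrm{crys},\pi_n,\kbar}(M)$, this amounts to showing that every $d$-tuple $(\hat{x}_1,\ldots,\hat{x}_d)\in W_n(\mathcal{O}_{L_n})^{d}$ solving the three equations of Proposition \ref{liftinglem} over $L_n$ gives rise to some $d$-tuple $(\hat{x}'_1,\ldots,\hat{x}'_d)\in W_n(\oef)^{d}$ solving the same equations. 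The map $\eta$ extends coefficientwise to a ring map $\tilde{\eta}:W_n(\mathcal{O}_{L_n})\to W_n(\oef/\mathfrak{a}^{j}_{F/K})$; pick any lifts $\hat{x}'_i\in W_n(\oef)$ of $\tilde{\eta}(\hat{x}_i)$. Because the data $\hat{c},\hat{c}_{i,j},\hat{s}_{i,j},\hat{\gamma}$ already lie in $W_n(\mathcal{O}_{F_n})$ and are thus preserved by $\tilde{\eta}$, the tuple $(\hat{x}'_i)$ satisfies all three equations of Proposition \ref{liftinglem} modulo $W_n(\mathfrak{a}^{j}_{F/K})$, so the defect of the Frobenius equation lies in a controlled Witt-vector ideal.

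At this point I would seek an exact solution of the form $(\hat{x}'_i+([\zeta_{p^n}]-1)^{r}\hat{y}_i)$ with $\hat{y}_i\in W_n(m_{\kbar})$, which leads to a fixed-point equation of exactly the type solved in the proof of Proposition \ref{liftinglem} via Lemma \ref{recursion}. The uniqueness statement of Proposition \ref{liftinglem} combined with the $G_F$-equivariance of the construction will then force the resulting tuple to be $G_F$-fixed, hence to lie in $W_n(\oef)^{d}$, yielding the desired element of $T^{*}_{\mathrm{crys},\pi_n,F}(M)$. The main obstacle, and the reason for the precise numerical value of $u(K,r,n)$, will be the valuation bookkeeping that makes this fixed-point argument contractive: one loses $er/(p-1)$ upon dividing the defect by $([\zeta_{p^n}]-1)^{r}$, further valuation is lost when passing through the Witt-coordinate Frobenius lift $\Phi$ (roughly $en$), and for $r\geq 2$ a finer analysis of divisibility by $\hat{\gamma}^{r}$ is needed that contributes the $-1/p^{n}$ correction. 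The threshold $j>u(K,r,n)$ is exactly what is needed so that the recursive system of Lemma \ref{recursion} has a unique solution in $m_{\kbar}$, and verifying these inequalities carefully is expected to occupy the bulk of the proof.
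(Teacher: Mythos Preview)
Your overall strategy---transport solutions from $L_n$ to $F$ via $\eta$ and invoke Corollary~\ref{Fon}---matches the paper's. But two points in your outline do not work as stated, and one of them is the heart of the matter.

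\textbf{(1) The normalization of $\eta|_{\mathcal{O}_{F_n}}$.} You claim that after conjugating by $\Gal(F_n/K)$ you may assume $\eta|_{\mathcal{O}_{F_n}}$ coincides with the canonical inclusion modulo $\mathfrak{a}^{j}_{F/K}$. This is too strong: a Krasner-type argument (Lemma~\ref{AbbesSaito}) only gives $\eta(\pi_n)\equiv g(\pi_n)$ modulo $\mathfrak{a}^{\,j-s_f}_{F/K}$ for some $g\in G_K$, where $s_f=1-1/p^n+ne$, and similarly for $\zeta_{p^{n+1}}$. You cannot recover congruence modulo $\mathfrak{a}^{j}_{F/K}$ itself. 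The paper does not attempt this normalization; instead it keeps track of the element $g$ (Lemma~\ref{prei}, Lemma~\ref{z}, Corollary~\ref{i}), puts the \emph{twisted} $\Sigma$-structure $u\mapsto[g(\pi_n)]$ on the target ring $\bar{A}_{n,F,r+}$, shows that $\eta$ induces an \emph{injective} morphism $\bar{\eta}:\bar{A}_{n,L_n,r+}\to\bar{A}_{n,F,r+}$ in $'\Mod^{r,\phi}_{/\Sigma}$ (Lemma~\ref{Siglinear} and the lemma following it), and then compares $T^{*}_{\mathrm{crys},\kbar,\tilde{\pi}_n}(M)$ with $T^{*}_{\mathrm{crys},\kbar,\pi_n}(M)$ via the $g$-conjugation. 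The congruence one actually obtains is modulo $\mathfrak{b}_F$, which is exactly enough to make $\bar{\eta}$ well-defined on the quotient rings $\bar{A}_{n,\bullet,r+}$; no finer congruence is needed, and no re-running of the fixed-point argument is required.

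\textbf{(2) Where $u(K,r,n)$ really enters.} You locate the numerical threshold in the contractivity of the fixed-point equation from Proposition~\ref{liftinglem}. This is not correct: Lemma~\ref{recursion} is an unconditional contraction (it needs only $\epsilon_i\in m_N$), and Proposition~\ref{liftinglem} is already established without any hypothesis on $j$. In the paper the precise value of $u(K,r,n)$ arises instead from the Krasner estimates in Lemmas~\ref{prei} and~\ref{z}: one needs $j-s_f>er/(p-1)$ with $s_f=1-1/p^n+ne$ (for $\pi_n$) and the analogous different bound for $\zeta_{p^{n+1}}$, together with $j>u_{F_n/K}=1+e(n+1/(p-1))$ from Corollary~\ref{ramlem}. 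The first inequality yields $1-1/p^n+e(n+r/(p-1))$, which dominates for $r\geq 2$; the last yields $1+e(n+1/(p-1))$, which dominates for $r=1$. That is the entire source of the $-1/p^n$ correction, not any refinement of the recursion.

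Once you replace your normalization step by the twist via $g$ and recognize that the transport happens cleanly at the level of $\bar{A}_{n,\bullet,r+}$ (so that the injectivity of $\bar{\eta}$ plus Corollary~\ref{Fon} finishes the argument), your plan becomes the paper's proof.
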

\begin{proof}
We may assume that $F$ is contained in $\kbar$. By assumption, we
 have $j > er/(p-1)$ and we see that
the ideal $\beF=\{x\in\oef\mid v_K(x)> er/(p-1)\}$ contains $\aFj$. Thus $\eta$ induces an $\okey$-algebra homomorphism
\[
\mathcal{O}_{L_n}\to \oef/\beF.
\]

Since $\eta$ also induces an
 $\okey$-algebra homomorphism 
$\mathcal{O}_{F_n}\to \oef/\mathfrak{a}_{F/K}^{j}$ and $r\geq 1$, from Corollary
 \ref{ramlem} and \cite[Proposition
 1.5]{F} we get a $K$-linear injection $F_n\to F$. Thus we see that $F$ contains
 $\pi_n$ and $\zeta_{p^{n+1}}$. More precisely, we have the following two lemmas.
 
\begin{lem}\label{prei}
There exists $i\in\bZ$ such that $\eta(\pi_n)\equiv \pi_n\zeta_{p^n}^{i}\text{ $\mathrm{mod}$ }\beF$.
\end{lem}
\begin{proof}
Since the map $\eta$ is $\okey$-linear, the equality $\eta(\pi_n)^{p^n}=\pi$ holds in
 $\oef/\aFj$. Set $\hat{x}$ to be a lift of $\eta(\pi_n)$ in
 $\oef$. Then we have
\[
v_K(\hat{x}^{p^n}-\pi)=\sum_{i=0}^{p^n-1}v_K(\hat{x}-\pi_n\zeta_{p^n}^{i})\geq j.
\]
Let us apply Lemma \ref{AbbesSaito} to $f(T)=T^{p^n}-\pi\in\okey[T]$. Then, with the
 notation of the lemma, we have
\[
s_f=1-\frac{1}{p^n}+ne\ \text{and}\ \alpha_f=\frac{1}{p^n}+\frac{e}{p-1}.
\]
Since $j-s_f>er/(p-1)$ by assumption, we have 
\[
\hat{x} \equiv \pi_n\zeta_{p^n}^{i}\text{ mod }\beF
\]
for some $i$. 
\end{proof}
 
\begin{lem}\label{z}
There exists $g'\in G_K$ such that $\eta(\zeta_{p^{n+1}})\equiv g'(\zeta_{p^{n+1}})\text{ $\mathrm{mod}$
 }\beF$. 
\end{lem}
\begin{proof}
Set $N$ to be the maximal unramified subextension of $K(\zeta_{p^{n+1}})/K$. Since the map $\okey\to\mathcal{O}_{N}$ is etale, there exists a $K$-algebra injection $g_0:N\to F$ such that $\eta(x)\equiv g_0(x) \text{ $\mathrm{mod}$ }\mathfrak{a}_{F/K}^{j}$ for any $x\in\mathcal{O}_{N}$. Let $\varpi$ be a uniformizer of $K(\zeta_{p^{n+1}})$ and $f(T)\in \mathcal{O}_{N}[T]$ be the Eisenstein polynomial of $\varpi$ over $\mathcal{O}_{N}$. We let $f^{g_0}(T)\in\mathcal{O}_{N}[T]$ denote the conjugate of $f$ by $g_0$. Then $f^{g_0}$ satisfies the conditions of Lemma \ref{AbbesSaito}. By definition we have $s_{f^{g_0}}=s_f$ and $\alpha_{f^{g_0}}=\alpha_f$. Since the roots of $f^{g_0}(T)$ are conjugates of $\varpi$ over $K$, Lemma \ref{AbbesSaito} implies as in the previous lemma that there exists $g'\in G_K$ such that $g'|_N=g_0$ and $\eta(\varpi)\equiv g'(\varpi) \text{ $\mathrm{mod}$ }\mathfrak{a}_{F/K}^{j-s_f}$. Since $\mathcal{O}_{K(\zeta_{p^{n+1}})}$ is generated by $\varpi$ over $\oen$, we see that $\eta(\zeta_{p^{n+1}})\equiv g'(\zeta_{p^{n+1}}) \text{ $\mathrm{mod}$ }\mathfrak{a}_{F/K}^{j-s_f}$.

Thus it is enough to check the inequality $j-s_f>er/(p-1)$. Note that $s_f$ is equal to the valuation $v_K(\mathfrak{D}_{K(\zeta_{p^{n+1}})/N})$ of the different of the totally ramified Galois extension $K(\zeta_{p^{n+1}})/N$. To bound this, put $G=\Gal(K(\zeta_{p^{n+1}})/N(\zeta_p))$ and $e'=e(N(\zeta_p)/N)$. We have
\[
v_K(\tau(\varpi)-\varpi))\leq v_K(\tau(\zeta_{p^{n+1}})-\zeta_{p^{n+1}})
\]
for any $\tau\in G$ and thus 
\[
v_K(\mathfrak{D}_{K(\zeta_{p^{n+1}})/N(\zeta_p)})\leq \sum_{\tau\neq 1\in G}v_K(\tau(\zeta_{p^{n+1}})-\zeta_{p^{n+1}})\leq ne.
\]
We also have the equality $v_K(\mathfrak{D}_{N(\zeta_p)/N})=1-1/e'$ and hence we get
\[
s_f=v_K(\mathfrak{D}_{K(\zeta_{p^{n+1}})/N})\leq 1-1/e'+ne.
\]
Since $e'\leq p-1$, the inequality $j-s_f>er/(p-1)$ holds.
\end{proof}

\begin{cor}\label{i}
There exists $g\in G_K$ such that $\eta(\pi_n)\equiv g(\pi_n)\text{ $\mathrm{mod}$ }\beF$ and 
$\eta(\zeta_{p^{n+1}})\equiv g(\zeta_{p^{n+1}})\text{ $\mathrm{mod}$
 }\beF$. 
\end{cor}
\begin{proof}
Let $i\in\bZ$ and $g'\in G_K$ be as in Lemma \ref{prei} and Lemma \ref{z}, respectively. Since $K_n\cap K(\zeta_{p^{n+1}})=K$ (see for example
 \cite[Lemma 5.1.2]{Li}), we can find an element $g\in G_K$ such that
 $g(\pi_n)=\pi_n\zeta_{p^n}^{i}$ and
 $g(\zeta_{p^{n+1}})= g'(\zeta_{p^{n+1}})$.
\end{proof}

\begin{lem}\label{injp}
For $m\in\bZ_{\geq 0}$, set an ideal $\mathfrak{b}_{L_n}^{(m)}$ of $\mathcal{O}_{L_n}$ to be
\[
\mathfrak{b}_{L_n}^{(m)}=\{x\in\mathcal{O}_{L_n}\mid v_K(x)>\frac{er}{p^m(p-1)}\}
\]
and similarly for $F$. Then the $\okey$-algebra homomorphism
 $\eta$ induces an $\okey$-algebra injection
\[
\eta^{(m)}:\mathcal{O}_{L_n}/\mathfrak{b}_{L_n}^{(m)}\to \oef/\mathfrak{b}_{F}^{(m)}
\]
for any $m$.
\end{lem}

\begin{proof}
We may assume that $L_n$ is totally ramified over $K$. We write the Eisenstein polynomial of a uniformizer $\pi_{L_n}$ of
 $L_n$ over $\okey$ as
\[
P(T)=T^{e'}+c_1 T^{e'-1}+\cdots+c_{e'-1} T+c_{e'},
\]
where $e'=e(L_n/K)$. Then $z=\eta(\pi_{L_n})$ satisfies $P(z)=0$ in $\oef/\mathfrak{a}_{F/K}^{j}$. Let $\hat{z}$ be a
 lift of $z$ in $\oef$. Since $j>1$, we have
 $v_K(\hat{z})=1/e'$. The condition $i>e(L_n)r/(p^m(p-1))$ is equivalent
 to the condition
\[
v_K(\hat{z}^i)>\frac{e(L_n)r}{p^m(p-1)}\cdot \frac{1}{e'}=\frac{er}{p^m(p-1)}.
\]
Thus the claim follows.
\end{proof}

Since $L_n$ contains $F_n$, we can consider the ring
\[
\bar{A}_{n,L_n,r+}=W_n(\mathcal{O}_{L_n}/\mathfrak{b}_{L_n})/([\zeta_{p^n}]-1)^rW_n(m_{L_n}/\beLn)
\]
and similarly $\bar{A}_{n,F,r+}$ for $F$. We give these rings structures of $\Sigma$-algebras as follows. The ring $\bar{A}_{n,L_n,r+}$ is
 considered as a $\Sigma$-algebra by using the system
 $\{\pi_n\}_{n\in\mathbb{Z}_{\geq 0}}$ we chose of
 $p$-power roots of $\pi$, as in the previous section. On the other
 hand, using $g\in G_K$ in Corollary \ref{i}, put
 $\tilde{\pi}_n=g(\pi_n)$ and
 $\tilde{\zeta}_{p^{n+1}}=g(\zeta_{p^{n+1}})$. Then we
 consider the ring $\bar{A}_{n,F,r+}$ as a $\Sigma$-algebra by using a
 system of $p$-power roots of $\pi$ containing $\tilde{\pi}_n$. We define $\Fil^r$ and
 $\phi_r$ of these rings in the same way as before.

\begin{lem}\label{Siglinear}
The induced ring homomorphism
\[
\bar{\eta}:\bar{A}_{n,L_n,r+}\to \bar{A}_{n,F,r+}
\] 
is a morphism of the category $'\Mod^{r,\phi}_{/\Sigma}$.
\end{lem}
\begin{proof}
Firstly, we check that $\bar{\eta}$ is $\Sigma$-linear. By definition, this homomorphism commutes with the action of
 the element $u\in\Sigma$. To show the compatibility with the element $Y\in\Sigma$, let us
 consider the commutative diagram
\[
\begin{CD}
W_n(\mathcal{O}_{L_n}/\mathfrak{b}_{L_n}) @>{\eta_n}>>
 W_n(\oef/\mathfrak{b}_F) \\
@VVV @VVV \\
\bar{A}_{n,L_n,r+} @>{\bar{\eta}}>> \bar{A}_{n,F,r+},
\end{CD}
\]
where the horizontal arrows are induced by $\eta$. Note that we have
 $\eta_n([\pi_n])=[\tilde{\pi}_n]$ and $\eta_n([\zeta_{p^{n+1}}])=[\tilde{\zeta}_{p^{n+1}}]$. Let $a\in W(R)^\times$ and
 $v=t/E([\upi])\in W(R)^\times$ be as in the previous section. Let $a_n$ and $v_n$ denote the images of $a$ and
 $v$ in $W_n(\oeln/\beLn)$, respectively. Then the element $v_n$ is a
 solution of the equation
\[
E([\pi_n])v_n=1+[\zeta_{p^{n+1}}]+\cdots+[\zeta_{p^{n+1}}]^{p-1}.
\]
Similarly, we define elements $\tilde{a}_n$ and $\tilde{v}_n$ of $W_n(\oef/\beF)$ using
 $\tilde{\pi}_n$ and $\tilde{\zeta}_{p^{n+1}}$. By definition, the element
 $\tilde{v}_n$ is a solution of the equation
\[
E([\tilde{\pi}_n])\tilde{v}_n=1+[\tilde{\zeta}_{p^{n+1}}]+\cdots+[\tilde{\zeta}_{p^{n+1}}]^{p-1}.
\]
Now what we have to show is the equality
\[
\bar{\eta}(a_n v_{n}^{-1}E([\pi_n])^{p-1})=\tilde{a}_n\tilde{v}_{n}^{-1}E([\tilde{\pi}_n])^{p-1}
\]
in the ring $\bar{A}_{n,F,r+}$.
Since the element $a_n$ of $W_n(\oeln/\beLn)$ is a linear combination
 of the elements $1,[\zeta_{p^{n+1}}],\ldots,[\zeta_{p^{n+1}}]^{p-1}$
 over $\mathbb{Z}$, we have $\bar{\eta}(a_n)=\tilde{a}_n$ in
 $\bar{A}_{n,F,r+}$. The elements $\tilde{v}_n$ and
 $\bar{\eta}(v_n)$ satisfy the same equation in
 $\bar{A}_{n,F,r+}$. Since these two elements are invertible, we get
 $\bar{\eta}(v_n)^{-1}E([\tilde{\pi}_n])=\tilde{v}_{n}^{-1}E([\tilde{\pi}_n])$ and the equality holds. Since the diagram above is compatible with the Frobenius endomorphisms, we see from the definition that $\bar{\eta}$ also preserves $\Fil^r$
 and commutes with $\phi_r$ of both sides.
\end{proof}

Thus we get a homomorphism of abelian groups
\[
T^{*}_{\mathrm{crys},L_n,\pi_n}(M)\to T^{*}_{\mathrm{crys},F,\tilde{\pi}_n}(M).
\]
Then the following lemma, whose proof is omitted in \cite[Subsection
 3.13]{Ab}, implies that this homomorphism is an injection. We insert
 here a proof of this lemma for the convenience of the reader.
\begin{lem}
The ring homomorphism $\bar{\eta}:\bar{A}_{n,L_n,r+}\to\bar{A}_{n,F,r+}$ is
 an injection.
\end{lem}
\begin{proof}
Let $x=(x_0,\ldots,x_{n-1})$ be an element of $W_n(\oeln/\beLn)$ such that 
\[
(\eta^{(0)}(x_0),\ldots,\eta^{(0)}(x_{n-1}))\in([\zeta_{p^n}]-1)^rW_n(m_F/\beF),
\]
where $\eta^{(0)}$ is as in Lemma \ref{injp}. Suppose that $x_0=\cdots=x_{m-1}=0$ for some $0\leq m \leq n-1$. Let $\hat{z}_i\in\oef$ be a lift of $\eta^{(0)}(x_i)$. By Lemma \ref{Wittlem}, we have
\[
(0,\ldots,0,\hat{z}_m,\ldots,\hat{z}_{n-1})=([\zeta_{p^n}]-1)^r(\hat{y}_0,\ldots,\hat{y}_{n-1})
\]
for some $\hat{y}_0,\ldots,\hat{y}_{n-1}\in m_F$. Thus we get $\hat{y}_0=\cdots=\hat{y}_{m-1}=0$ and $v_K(\hat{z}_m)>er/(p^{n-1-m}(p-1))$. Then Lemma \ref{injp} implies that $x_m$ is contained in the ideal $\mathfrak{b}_{L_n}^{(n-1-m)}/\beLn$ and 
\[
x=([\zeta_{p^n}]-1)^r(0,\ldots,0,y,0,\ldots,0)+(0,\ldots,0,x'_{m+1},\ldots,x'_{n-1})
\]
for some $y\in m_{L_n}/\beLn$ and $x'_{m+1},\ldots,x'_{n-1}\in \oeln/\beLn$. Repeating this, we see that $x$ is zero in $\bar{A}_{n,L_n,r+}$ and the lemma follows.
\end{proof}

Now Corollary \ref{Fon} shows that the abelian group
 $T^{*}_{\mathrm{crys},L_n,\pi_n}(M)$ has the same cardinality as $T^{*}_{\mathrm{crys},\kbar,\pi_n}(M)$. This implies that the abelian group
 $T^{*}_{\mathrm{crys},F,\tilde{\pi}_n}(M)$ has cardinality no less than $\# T^{*}_{\mathrm{crys},\kbar,\pi_n}(M)$. Let $g\in G_K$ be as in Corollary \ref{i}. Then we have the following lemma.

\begin{lem}
The $G_{F_n}$-module $T^{*}_{\mathrm{crys},\kbar,\tilde{\pi}_n}(M)$ is isomorphic to the conjugate of the $G_{F_n}$-module $T^{*}_{\mathrm{crys},\kbar,\pi_n}(M)$ by the element
 $g$. 
\end{lem}

\begin{proof}
Let us consider the composite
\[
\Sigma\to\bar{A}_{n,r+}\overset{g}{\to} \bar{A}_{n,r+}
\]
of the ring homomorphism defined by $u\mapsto [\pi_n]$ and the map induced by $g$. We
 can check that this is the natural ring homomorphism defined by
 $u\mapsto [\tilde{\pi}_n]$ as in the proof of Lemma \ref{Siglinear}. Thus we have an
 isomorphism of abelian groups
\begin{align*}
\Hom_{\Sigma}(M,\bar{A}_{n,r+})&\to
 \Hom_{\Sigma}(M,\bar{A}_{n,r+}) \\
f &\mapsto g\circ f,
\end{align*}
where we consider on the ring $\bar{A}_{n,r+}$ on the right-hand side the filtered $\phi_r$-module structure over $\Sigma$ defined by
 $\tilde{\pi}_n$. 
As in the proof of Lemma \ref{Siglinear}, we can check that this
 isomorphism induces an injection
\[
\Hom_{\Sigma,\Fil^r,\phi_r}(M,\bar{A}_{n,r+})\to
 \Hom_{\Sigma,\Fil^r,\phi_r}(M,\bar{A}_{n,r+}).
\]
This is also an isomorphism, for the map $f\mapsto g^{-1}\circ f$ defines its inverse. 
\end{proof}

Thus we have $\# T^{*}_{\mathrm{crys},\kbar,\tilde{\pi}_n}(M)=\# T^{*}_{\mathrm{crys},\kbar,\pi_n}(M)$.
Since $L_n$ is Galois over $K$, this lemma also shows that 
the finite
Galois extension of $F_n$ cut out by the action on
 $T^{*}_{\mathrm{crys},\kbar,\tilde{\pi}_n}(M)$ is $L_n$. Hence we
 see from Corollary \ref{Fon} that $F$ contains $L_n$ and
 Proposition \ref{Pm'} follows. This concludes the proof of Theorem \ref{main_br}.
\end{proof}

\begin{proof}[Proof of Corollary \ref{coret}]

The second assertion follows immediately from Theorem \ref{main_br} and \cite[Th\'{e}or\`{e}me 1.1]{Ca2}. As for the first assertion, note that if $r=0$ then $V$ is unramified and the assertion is trivial. Thus we may assume $p\geq 3$. Since we have the natural surjection $\mathcal{L}/p^n\mathcal{L}\to \mathcal{L}/\mathcal{L}'$, we may also assume $\mathcal{L}'=p^n\mathcal{L}$. For $\hat{\cM}\in\Mod^{r,\phi,N}_{/S}$, let us consider the $G_K$-module
\[
T_{\mathrm{st},\upi}^{*}(\hat{\cM})=\Hom_{S,\Fil^r,\phi_r,N}(\hat{\cM},\Ast).
\]
By \cite[Theorem 2.3.5]{Li}, there exists $\hat{\cM}\in\Mod^{r,\phi,N}_{/S}$ such that the $G_K$-module $\mathcal{L}$ is isomorphic to $T_{\mathrm{st},\upi}^{*}(\hat{\cM})$. Then we see that the $G_K$-module $\mathcal{L}/p^n\mathcal{L}$ is isomorphic to $\Tsta(\hat{\cM}/p^n\hat{\cM})$ and the assertion follows from Theorem \ref{main_br}.
\end{proof}

\begin{rmk}\label{sharp}
The ramification bound in Theorem \ref{main_br} is sharp for
 $r\leq 1$. Indeed, the greatest upper ramification break
 $1+e(n+1/(p-1))$ for $r=1$
 is obtained by the $p^n$-torsion of the Tate curve
 $\kbar^\times/\pi^\mathbb{Z}$ (see Remark \ref{Tate}). The author does not know whether these
 bounds are sharp also for $r\geq 2$.
\end{rmk}

\end{document}